\newtheorem{proposition}{Proposition}
\title{GoRINNs: Godunov-Riemann Informed Neural Networks for Learning Hyperbolic Conservation Laws}
\author{
\textbf{Dimitrios G. Patsatzis\textcolor{blue}{$^{1}$}, Mario di Bernardo\textcolor{blue}{$^{1,2}$}, Lucia Russo\textcolor{blue}{$^{3}$}, Constantinos Siettos\textcolor{blue}{$^{4,}$}\thanks{Corresponding author, email: \texttt{constantinos.siettos@unina.it}}}
{}\\
\textcolor{blue}{$^{(1)}$}Modelling Engineering Risk and Complexity, \emph{Scuola Superiore Meridionale}, Naples, Italy \\
\textcolor{blue}{$^{(2)}$}Dept. of Electrical Engineering and Information Technology, \emph{University of Naples Federico II}, Naples, Italy\\
\textcolor{blue}{$^{(3)}$}Institute of Science and Technology for Energy and Sustainable Mobility,\\ \emph{Consiglio Nazionale delle Ricerche}, Naples, Italy\\
\textcolor{blue}{$^{(4)}$}Dept. of Mathematics and Applications ``Renato Caccioppoli'', \emph{University of Naples Federico II}, Naples, Italy\\
}
\begin{document}
\maketitle
\begin{abstract}
We present GoRINNs:~numerical analysis-informed (shallow) neural networks for the solution of inverse problems of non-linear systems of conservation laws.~GoRINNs is a hybrid/blended machine learning scheme based on high-resolution Godunov schemes for the solution of the Riemann problem in hyperbolic Partial Differential Equations (PDEs).~In contrast to other existing machine learning methods that learn the numerical fluxes or just parameters of conservative Finite Volume methods, relying on deep neural networks (that may lead to poor approximations due to the computational complexity involved in their training), GoRINNs learn the closures of the conservation laws per se based on ``intelligently`` numerical-assisted shallow neural networks.~Due to their structure, IN particular, GoRINNs provide explainable, conservative schemes, that solve the inverse problem for hyperbolic PDEs,
on the basis of approximate Riemann solvers that satisfy the Rankine-Hugoniot condition.~The performance of GoRINNs is assessed via four benchmark problems, namely the Burger's, the Shallow Water, the Lighthill-Whitham-Richards and the Payne-Whitham traffic flow models.~The solution profiles of these PDEs exhibit shock waves, rarefactions and/or contact discontinuities at finite times.~We demonstrate that GoRINNs provide a very high accuracy both in the smooth and discontinuous regions.     
\end{abstract}

\keywords{Numerical Analysis Informed Neural Networks \and Riemann solvers \and Godunov Scheme \and Hyperbolic Partial Differential Equations \and Conservation Laws \and Interpretable Machine Learning}

\section{Introduction}
\label{sec:Intro}
Conservation laws (of mass, momentum, and energy) are fundamental physical laws present in many problems including fluid mechanics including multiphase problems \cite{batchelor2000introduction,gibou2007level}, traffic flow and crowd dynamics \cite{treiber2013traffic,bellomo2023human},  electromagnetism \cite{milton2024classical}, collisionless shocks for astrophysical phenomena\cite{treumann2009fundamentals} among others, in which a physical quantity is conserved in time over an isolated spatial domain \cite{leveque2002finite,dafermos2005hyperbolic}.~The conserved quantities are physically determined  or can arise in complex systems at the macroscopic level from the microscopic interactions (particles, molecules, vehicles, individuals, etc); see e.g., \cite{chapman1990mathematical}.~Conservation laws are often modelled with hyperbolic, (systems of) Partial Differential Equations (PDEs) \cite{leveque2002finite}, which can exhibit the challenging feature of emerging discontinuities, such as shock waves, at finite times 
even when the initial conditions are smooth \cite{dafermos2005hyperbolic}.

As only a few (simple) hyperbolic PDEs exhibit analytic solutions, for real-world applications approximate solutions are sought via numerical methods that focus on accurately capturing the emerging discontinuities.~For preserving the underlying conservation laws, the vast majority of numerical methods involve approximations of the numerical fluxes obtained by Finite Volume (FV) schemes \cite{leveque2002finite}, such as the Godunov and the Lax-Friedrichs schemes \cite{godunov1959finite,leveque2002finite}.~The former allows for the employment of high-resolution methods such as flux-limiter methods \cite{sweby1984high,clain2011high}, which avoid non-physical solutions and peculiar oscillatory behavior around the discontinuities when suitable flux-limiters are used \cite{leveque2002finite}.~Most importantly, these schemes are conservative, under specific conditions related to the Riemann solver, such as the \textit{Rankine-Hugoniot} (RH) condition \cite{toro2013riemann,leveque2002finite}.~Other methods include high-resolution (weighted) essential non-oscillatory (W)ENO schemes \cite{harten1997uniformly,jiang1996efficient}, 
discontinuous Galerkin \cite{lomtev1999discontinuous,girfoglio2021pod} and spectral methods \cite{wang2007high}.~For a comprehensive presentation of FV numerical methods for conservation laws, see \citep{dafermos2005hyperbolic,leveque2002finite,ketcheson2020riemann,bressan2000hyperbolic} and references therein. 

The solution of the so-called \textit{forward} problem, has been the objective of many Scientific Machine Learning (SciML) algorithms.~One broad category consists of physics-informed deep neural networks (PINNs) \citep{raissi2019physics,pang2019fpinns,yang2020physics,meng2020composite} 
which however, when employed for hyperbolic PDEs, have a rather poor performance/low accuracy near the shock waves \cite{mao2020physics,patel2022thermodynamically}.~To tackle this challenge, several PINN-based schemes have been proposed for respecting the conservation law \cite{patel2022thermodynamically,zhang2022implicit,de2024wpinns,rajvanshi2024integral}. 
Finally, other methods include hybrid/ blended methods, which combine the classical numerical methods (to guarantee conservation) with neural networks (NNs) \cite{wang2019learning,kossaczka2021enhanced,kossaczka2023deep,magiera2020constraint,pakravan2021solving,ruggeri2022neural,wang2023fluxnet}.

For the solution of the \textit{inverse} problem (that is learning functional terms/closures/boundary conditions of unknown PDEs, as well as values of the parameters from available data), SciML methods typically include estimation of the time and spatial derivatives from time-series data, which are then provided in sparse regression models \cite{schaeffer2017learning,rudy2017data,kang2021ident}, DNNs \cite{lee2023learning,galaris2022numerical} and Gaussian Process Regressors \cite{lee2020coarse,raissi2018hidden}.~However, such schemes, relying mostly on DNNs, face significant challenges when employed to find closures of hyperbolic PDEs, since they do not preserve explicitly the conservation laws, thus providing rather moderate accuracy due to the computational complexity involved in the training of DNNs.~In fact near shocks, the finite difference schemes for the estimation of the derivatives cannot accurately learn the true differential operator, and NN-based differentiation schemes may be required \cite{bar2019learning}.~In addition, even when the derivative estimation is accurate, the above methods may introduce artificial differential terms in the related closures (see e.g. inviscid Burgers' equation in \cite{schaeffer2017learning}).~Approaches, relying to physics-informed neural networks (PINNs) and their conservative flavors \cite{pang2019fpinns,yang2020physics,meng2020composite,mao2020physics,jagtap2020conservative,patel2022thermodynamically,ahmadi2024ai}. Neural Operators (NOs) such as DeepONets \cite{lu2021learning}, Fourier Neural Operators \cite{li2020fourier,kovachki2023neural,thodi2024fourier}, and RandONets \cite{fabiani2024randonet}, have been also introduced for learning the evolution operator, i.e. the right-hand side of PDEs and/or the solution operator.~However, NOs are also prone to discontinuities, and only few works are dedicated to hyperbolic PDEs \citep{wen2022u,kovachki2023neural} whle the problem of computational complexity remains.~Recently, RiemannONets \cite{peyvan2024riemannonets}, a special variation of DeepONets \cite{lu2021learning}, were introduced to approximate with high accuracy discontinuous solutions of the Riemann problem for hyperbolic equations. For a review on the solution for the inverse problem for PDEs see also in \cite{pakravan2021solving}.

While such schemes have been used for parameter estimation tasks, they do not explicitly provide conservative schemes, nor have not so far been employed, to the best of our knowledge, for learning the functional closures in PDEs of conservation laws. For learning the flux functionals/closures, hybrid/blended methods have been also developed, combining SciML algorithms with numerical methods \cite{pakravan2021solving,morand2024deep,chen2024learning,kim2024approximating}.
The above hybrid methods are especially crafted for respecting conservation laws. However, they, learn the numerical approximation of the fluxes/closures rather than the physical closures per se. This may introduce significant inaccuracy for coarsely discretized data.~For learning the physical, parameterized, flux closure, in \cite{li2023identification}, a special network architecture, resulting to rationals of polynomial functions, was coupled with the Rusanov (local Lax-Friedrichs) numerical FV scheme.~Leaving aside the numerical approximation accuracy of the selected FV scheme (which is usually of first-order), all the aforementioned hybrid methods have been developed for non-linear scalar hyperbolic PDEs, in which the discontinuities (shocks) usually travel along one characteristic.~Clearly, this is not the case for systems of hyperbolic PDEs. 

In this work, we present a hybrid/blended approach for the solution of the inverse problem for both scalar and systems of hyperbolic PDEs in 1-dim. spatial domains. In particular, we couple shallow NNs with high-resolution Godunov-type FV schemes for the solution of the Riemann problem. We name this scheme as \textit{Godunov-Riemann Informed Neural Networks (GoRINNs)}.\par
In contrast to existing hybrid methods that discover parameters or numerical fluxes \cite{morand2024deep,chen2024learning,kim2024approximating}, based on DNNs, here we use ``intelligently`` designed numerical analysis informed shallow NNs to approximate directly the physical flux functional terms, which are missing from the flux function/closure of the unknown system of PDEs.~To take into account partial information that may  be available, GoRINNs can be also employed in the presence of known physics such as known fluxes or source terms in the system of PDEs.~For obtaining the numerical solution of the unknown system, we perform operator splitting and solve the homogeneous part of the PDEs with high-resolution Godunov-type methods with flux-limiters and the inhomogeneous part (if any) with stiff integration schemes.~These numerical schemes provide high-order accuracy both far and near the emerging discontinuities.~However, since we deal with non-linear systems of hyperbolic PDEs, the related Riemann problem is solved by approximate Riemann solvers \cite{leveque2002finite}, which need to satisfy the \textit{Rankine-Hugoniot} (RH) condition in order for the numerical scheme to be conservative \cite{toro2013riemann,leveque2002finite}.~While NNs have been used to obtain RH condition-satisfying approximations for the solution of the Riemann problem via regression \cite{magiera2020constraint,ruggeri2022neural,wang2023fluxnet}, here we solve the inverse problem with unknown flux terms/closures; hence the target solutions are simply not available.~To deal with this problem, we construct Riemann solvers that are dependent on the unknown flux function/closure and constrain them to satisfy the RH condition; the latter is implemented as a soft constraint.

In this way, GoRINNs not only learn the physical unknown flux term/closures, but also the related approximate Riemann solver that renders the scheme conservative.~This enables the direct implementation of GoRINNs into software packages for the solution of hyperbolic PDEs, as Clawpack \cite{clawpack}, which require both the functional and the Riemann solver.~This is the first time that such a hybrid approach for systems of conservation laws, is presented, allowing also the inclusion of source terms.

The performance of GoRINNs is assessed via four benchmark problems of non-linear conservation laws arising in fluid dynamics and traffic flow.~First, we consider the scalar conservation laws of the inviscid Burgers' equation \cite{burgers1948mathematical} and the Lighthill-Whitham-Richards (LWR) equation \cite{lighthill1955kinematic,richards1956shock}, for which we learn the flux function and the velocity closure, respectively.~Next, we study two systems of two conservation laws, the Shallow Water (SW) and the Payne-Whitham (PW) equations \cite{payne1971model,whitham2011linear}, for both of which we learn the pressure closures in the flux function.~We highlight here, that the Burgers', LWR and SW equations are homogeneous hyperbolic PDEs that exhibit analytic solutions for the Riemann problem, while for the PW equations not only the Riemann problem cannot be solved analytically, but also source terms are included.~For all the benchmark problems, the approximate Riemann solvers are derived in \ref{app:RS4bp}.~We underline here that for all problems under study, GoRINNs due to their structure do not require extensive training data sets for achieving high approximation accuracy.~This is because, a low number of complete (in space), time observations results in a high number of residuals to be minimized for the optimization problem, even for coarse spatial discretizations.

The reminder of the paper is organized as follows.~In \cref{sec:set}, we concisely describe the problem definition.~In \cref{sec:meth}, we briefly present the high-resolution Godunov-type FV numerical scheme and introduce in detail GoRINNs for the solution of the inverse problem.~In \cref{sec:NR}, we present the numerical results for the four benchmark problems considered (which are presented in \ref{app:Bench} along with details about the forward problem solution required for the data acquisition) and assess the convergence and accuracy of GoRINNs.~Finally, in \cref{sec:Con}, we conclude, discussing limitations and perspectives of the present work.

\section{Problem Definition}
\label{sec:set}

We consider the general form of a system of $D$ conservation laws, with the possible inclusion of source terms in an open domain $\Omega$, and its boundary $\vartheta \Omega$.~The evolution of conservative quantities $\mathbf{u} = \mathbf{u}(t,x) \in \mathbb{R}^D$ can be described by a non-homogeneous system of PDEs with prescribed initial and boundary conditions, in the form:
\begin{align}
    \partial_t \mathbf{u} + \partial_x \mathbf{f}(\mathbf{u}) = \mathbf{s}(\mathbf{u}) \, \text{in}\,  \Omega, \quad \mathbf{u}(0,x)=\mathbf{u}^0(x), \quad \mathcal{B}(\mathbf{u}(t,\vartheta \Omega))=0  \, \mbox{on} \,  \vartheta \Omega, \label{eq:genPDE}
\end{align}
where $x\in \Omega$, and $\mathbf{f}(\mathbf{u}),\mathbf{s}(\mathbf{u}): \mathbb{R}^D \rightarrow \mathbb{R}^D$ are continuously differentiable, non-linear functions, denoting the flux function and the source term, respectively.~The system of PDEs in \cref{eq:genPDE} is hyperbolic iff the Jacobian matrix $\mathbf{J}(\mathbf{u}) = \partial_{\mathbf{u}} \mathbf{f}( \mathbf{u})$ is diagonalizable with real eigenvalues, for any $\mathbf{u}\in \mathbb{R}^D$.

Assuming the flux function $\mathbf{f}(\mathbf{u})$ and source term $\mathbf{s}(\mathbf{u})$ analytically available, deriving the solution of the system in \cref{eq:genPDE} is  challenging due to the frequent occurrence of discontinuities (e.g., shock waves) emerging at finite times, even when the initial conditions are smooth functions \citep{dafermos2005hyperbolic,leveque2002finite}.~As discussed in \cref{sec:Intro}, classical numerical FV schemes, novel SciML methods, such as PINNs, NOs, or hybrid methods, can be used for deriving approximate solutions of the forward problem.

In this work, we aim at the solution of  the inverse problem and effectively learn flux functionals/closures of the unknown PDE, given data of the state variables and partial information of about the system.

We consider the general form of the system in \cref{eq:genPDE} in the form: 
\begin{align}
    \partial_t \mathbf{u} + \partial_x \left(\mathbf{f}_K(\mathbf{u}) + \mathbf{f}_U(\mathbf{u}, \mathcal{N}(\mathbf{u})) \right) = \mathbf{s}(\mathbf{u}), \label{eq:unPDE}
\end{align}
where the flux term $\mathbf{f}(\mathbf{u})=\mathbf{f}_K(\mathbf{u}) + \mathbf{f}_U(\mathbf{u}, \mathcal{N}(\mathbf{u}))$ consists of two continuously differentiable non-linear functions, the known $\mathbf{f}_K(\mathbf{u}): \mathbb{R}^D \rightarrow \mathbb{R}^D$ and the unknown $\mathbf{f}_U(\mathbf{u}, \mathcal{N}(\mathbf{u})): \mathbb{R}^D \rightarrow \mathbb{R}^D$.~The unknown flux term includes a completely unknown functional of the state variables $\mathcal{N}(\mathbf{u}): \mathbb{R}^D \rightarrow \mathbb{R}^D$, but may also include known functionals of $\mathbf{u}$.~This particular form allows for the approximation of unknown flux functions for which partial (or desired) information may be available (e.g., when $\mathbf{f}_U(\mathbf{u}, \mathcal{N}(\mathbf{u})) = \mathbf{g}(\mathbf{u}) \cdot \mathcal{N}(\mathbf{u})$ with $\mathbf{g}(\mathbf{u})$ known).

As discussed in \cref{sec:Intro}, we shall approximate the unknown functional $\mathcal{N}(\mathbf{u})$ via shallow NNs, thus learning the physical/analytical flux $\mathbf{f}_U(\mathbf{u}, \mathcal{N}(\mathbf{u}))$, in contrast to existing hybrid methods that learn the numerical fluxes \cite{morand2024deep,chen2024learning,kim2024approximating}.~For learning $\mathcal{N}(\mathbf{u})$, we shall solve the forward problem for the system of the unknown system of PDEs in \cref{eq:unPDE}, the solution of which generates the available data of the state variables.

\section{Methodology: The GoRINNs}
\label{sec:meth}
We present \textit{Godunov-Riemann Informed Neural Networks} GoRINNs, a hybrid numerical analysis informed SciML scheme, for solving the inverse problem for hyperbolic systems of PDEs in the form of \cref{eq:unPDE}, particularly for learning unknown flux terms.~Our approach relies on the solution of the forward problem via high-resolution Godunov-type FV schemes, which, for the sake of completeness, we briefly describe below.

\subsection{High-resolution Godunov-type methods}
\label{sb:FP_god}

Consider the system in \cref{eq:genPDE}, where the flux and source functionals are analytically known.~For the solution of \cref{eq:genPDE}, the numerical schemes require operator splitting for the homogeneous and inhomogeneous parts (if source terms are included). Typically, the homogeneous part is solved by FV schemes and then, the inhomogeneous part is integrated by implicit or explicit schemes, depending on the nature of $\mathbf{s}(\mathbf{u})$ which can result in stiff problems.~To numerically solve the homogeneous part, we employ high-resolution Godunov-type methods with flux-limiters, which allow for high-order accuracy near discontinuities and do not introduce non-physical solutions.~More importantly, they are also conservative under conditions that will be discussed next.~In fact, these methods are used in many Computational Fluid Dynamics software packages, such as COMSOL Multiphysics\textsuperscript{\textregistered} \cite{comsol}, Clawpack \cite{clawpack}, etc.
For a comprehensive presentation of FV numerical methods for conservation laws, see \citep{dafermos2005hyperbolic,leveque2002finite,ketcheson2020riemann}.

For our illustration of employing a FV method, we will assume a 1-dim. spatial domain and its discretization in $i=1,\ldots,N$ FV cells, where the $i$-th cell occupies the space interval $[x_{i-1/2}, x_{i+1/2}]$, and assign the values of the state variables $\mathbf{u}$ in \cref{eq:genPDE} as cell volumes, which we denote by $\mathbf{Q}_i^n$ for the $i$-th cell at the $n$-th time step.~Typically, the update of each $\mathbf{Q}_i^n$ requires the solution of a \textit{Riemann problem} with a contact discontinuity between a left and a right state, say $\mathbf{q}_l\in\mathbb{R}^D$ and $\mathbf{q}_r\in\mathbb{R}^D$ \citep{riemann1860fortpflanzung}.

~Here, since we use a Godunov's scheme \citep{godunov1959finite} to solve the Riemann problem at each cell interface, the left and right states are given by adjacent cells at the $n$-th time step; i.e., $\mathbf{q}_l=\mathbf{Q}^n_{i-1}$ and $\mathbf{q}_r=\mathbf{Q}^n_i$.~Then, the solution of the Riemann problem provides a set of $M_w$ propagating waves, say $\mathbf{W}^p_{i-1/2}\in \mathbb{R}^{D}$, so that the contact discontinuity can be resolved as:
\begin{equation}
    \Delta \mathbf{Q}_{i-1/2} \equiv \mathbf{Q}^n_i-\mathbf{Q}^n_{i-1}=\sum_{p=1}^{M_w}\mathbf{W}^p_{i-1/2}.
\end{equation} 
The waves travel through the left interface $i-1/2$ of the $i$-th cell with characteristic speeds $s_{i-1/2}^p\in \mathbb{R}$, which may be negative or positive, say $(s_{i-1/2}^p)^-$ and $(s_{i-1/2}^p)^+$.~A $p$-th negative/positive speed, indicates that the $p$-th wave propagates towards the left/right.~As a result, left- and right-going fluctuations, $\mathcal{A}^- \Delta \mathbf{Q}_{i-1/2}$ and $\mathcal{A}^+ \Delta \mathbf{Q}_{i-1/2}$ respectively, are developed on the interface $i-1/2$, which are given by: 
\begin{equation}
    \mathcal{A}^- \Delta \mathbf{Q}_{i-1/2} = \sum_{p=1}^{M_w} (s_{i-1/2}^p)^- \mathbf{W}_{i-1/2}^p, \quad \mathcal{A}^+ \Delta \mathbf{Q}_{i-1/2} = \sum_{p=1}^{M_w} (s_{i-1/2}^p)^+ \mathbf{W}_{i-1/2}^p, 
    \label{eq:HRupdate1}
\end{equation}
and typically satisfy:
\begin{equation}
    \mathcal{A}^- \Delta \mathbf{Q}_{i-1/2} + \mathcal{A}^+ \Delta \mathbf{Q}_{i-1/2} = \mathbf{f}(\mathbf{Q}_i^n) - \mathbf{f}(\mathbf{Q}_{i-1}^n),
    \label{eq:sumFlux}
\end{equation}
in order for the solution to the Riemann problem to be conservative.~With the definition of the fluctuations in \cref{eq:HRupdate1}, a general high-resolution Godunov-type method provides the update of the value of the state variable $\mathbf{Q}_i^n$ as:
\begin{equation}
    \mathbf{Q}^{*,n+1}_i = \mathbf{Q}^n_i -\dfrac{\Delta t}{\Delta x} (\mathcal{A}^+ \Delta \mathbf{Q}_{i-1/2} + \mathcal{A}^- \Delta \mathbf{Q}_{i+1/2} ) - \dfrac{\Delta t}{\Delta x} (\bar{\mathbf{F}}_{i+1/2} - \bar{\mathbf{F}}_{i-1/2}),
    \label{eq:HRupdate}
\end{equation}
where the high-resolution corrections are:
\begin{equation}
    \bar{\mathbf{F}}_{i-1/2} = \dfrac{1}{2}\sum_{p=1}^{M_w} \lvert (s_{i-1/2}^p)^\pm \rvert \left( 1 - \dfrac{\Delta t}{\Delta x} \lvert (s_{i-1/2}^p)^\pm \rvert \right) \tilde{\mathbf{W}}_{i-1/2}^p.
    \label{eq:HRupdate2}
\end{equation}
The limited version of the $p$-th wave, $\tilde{\mathbf{W}}_{i-1/2}^p\in\mathbb{R}^D$,
is computed via flux-limiter functions by comparing the wave $\mathbf{W}_{i-1/2}^p$ with the waves in adjacent cells; $\mathbf{W}_{i+1/2}^p$ for negative speeds $(s_{i-1/2}^p)^-$ or $\mathbf{W}_{i-3/2}^p$ for positive ones $(s_{i-1/2}^p)^+$.~The use of flux-limiter functions (such as the minmod, superbee, Van Leer limiters) provides second-order accuracy to the numerical scheme and avoids non-physical\ peculiar oscillations near discontinuities or steep gradients \cite{leveque2002finite}.

~Note that a necessary condition for the convergence of the Godunov-type method in \cref{eq:HRupdate} is the satisfaction of the Courant-Friedrichs-Lewy (CFL) condition \citep{courant1928partiellen}, which essentially restricts the time step $\Delta t$ w.r.t. the cell length $\Delta x=x_{i+1/2}-x_{i-1/2}$ and the wave speeds, so that no wave is allowed to travel through more than one cell during the selected time step.~As is evident from \cref{eq:HRupdate}, the update for the $i$-th cell requires the values of the state variables at adjacent cells, typically the $i-2,\ldots,i+2$ cells.~For the update of the cell with $i=1,2,N-1,N$, the boundary conditions are taken into account, which are employed by augmenting the spatial domain, usually by two ghost cells per boundary; the values depend on the type of the boundary conditions, as discussed in detail in \cite{leveque2002finite}.

In the presence of source terms in \cref{eq:genPDE}, the update $\mathbf{Q}^{*,n+1}_i$ resulting from the Godunov-type method in \cref{eq:HRupdate}, is then used as initial condition for the integration of the inhomogeneous part of the PDE.~Thus, the value of the state variables at the next time step $\mathbf{Q}^{n+1}_i$ is obtained by numerical integration of $d\mathbf{u}/dt=\mathbf{s}(\mathbf{u})$ with suitable schemes depending on the nature of $\mathbf{s}(\mathbf{u})$.~A detailed discussion for the homogeneous/inhomogeneous operator splitting and the numerical integration schemes, is included on \cite{leveque2002finite}.~Naturally, $\mathbf{Q}^{n+1}_i=\mathbf{Q}^{*,n+1}_i$ in the absence of source terms.

\paragraph{Speeds and waves computation} Given the numerical scheme in \cref{eq:HRupdate}, the solution of the forward problem, requires the computation of the speeds $s_{i-1/2}^p$ and the waves $\mathbf{W}_{i-1/2}^p$ from the known flux term $\mathbf{f}(\mathbf{u})$.~The speeds and waves are provided by the solution of the Riemann problem, for which the so-called \textit{Rankine-Hugoniot} (RH) jump condition should be satisfied \cite{leveque2002finite} (determining the wave speeds and directions near jump discontinuities).~For linear systems of hyperbolic PDEs, the RH condition implies that the speeds $s_{i-1/2}^p$ and the waves $\mathbf{W}_{i-1/2}^p$ can be computed by the $p$-th eigenvalue and the corresponding right eigenvector of the constant matrix $\mathbf{A} =\partial_{\mathbf{u}} \mathbf{f}( \mathbf{u})$.~However, in a non-linear system the Riemann problem cannot be usually solved exactly, and thus approximate Riemann solvers are employed.~An obvious choice is the linearized  Riemann solver, for which it is assumed that the homogeneous system in \cref{eq:genPDE} can be locally approximated by: 
\begin{equation}
\mathbf{u}_t + \hat{\mathbf{A}}_{i-1/2} \mathbf{u}_x = \mathbf{0}, \label{eq:linRiem}
\end{equation}
where $\hat{\mathbf{A}}_{i-1/2}$ is an approximation of the Jacobian $\mathbf{J}(\mathbf{u}) = \partial_{\mathbf{u}} \mathbf{f}( \mathbf{u})$ in the neighborhood of the $(i-1)$-th and $i$-th cells.~Hence, the speeds and waves computed via the eigenvalues and the eigenvectors of the linearized matrix $\hat{\mathbf{A}}_{i-1/2}$, can be provided into the numerical scheme of \cref{eq:HRupdate}.~Note that $\hat{\mathbf{A}}_{i-1/2} \equiv \hat{\mathbf{A}}_{i-1/2}(\mathbf{q}_l,\mathbf{q}_r)$ and usually, as for the problems considered here, we are able to derive closed-form expressions for any $\mathbf{q}_l$ and $\mathbf{q}_r$ of the Riemann problem, hence accelerating computations.

In order for the linearized Riemann solver $\hat{\mathbf{A}}_{i-1/2}$ to guarantee that the numerical scheme in \cref{eq:HRupdate} is conservative, and that the approximate Riemann solver is consistent with the exact one, the matrix $\hat{\mathbf{A}}_{i-1/2}(\mathbf{q}_l,\mathbf{q}_r)$ should, for every $\mathbf{q}_l,\mathbf{q}_r$, satisfy the following properties:
\begin{enumerate}
    \item $\lim_{\mathbf{q}_l, \mathbf{q}_r\to\hat{\mathbf{q}}} \hat{\mathbf{A}}_{i-1/2}(\mathbf{q}_l,\mathbf{q}_r)  = \mathbf{J}(\hat{\mathbf{q}}) = \partial_{\mathbf{u}} \mathbf{f}( \hat{\mathbf{q}})$, so that the solution of the linearized system in \cref{eq:linRiem} with the numerical scheme in \cref{eq:HRupdate} is consistent with the solution of the non-linear conservation laws in \cref{eq:genPDE},
    \item $\hat{\mathbf{A}}_{i-1/2}(\mathbf{q}_l,\mathbf{q}_r)$ diagonalizable with real eigenvalues, so that the linearized system in \cref{eq:linRiem} is hyperbolic, and,
    \item $\hat{\mathbf{A}}_{i-1/2}(\mathbf{q}_l,\mathbf{q}_r)\cdot (\mathbf{q}_r-\mathbf{q}_l) = \mathbf{f}(\mathbf{q}_r) -\mathbf{f}(\mathbf{q}_l)$ (RH condition for systems), so that the solution of the linearized system in \cref{eq:linRiem} with the numerical scheme in \cref{eq:HRupdate} is conservative.
\end{enumerate}
The first two properties are essential for any approximate Riemann solver, while the third one is characteristic for a Roe solver \citep{roe1981approximate}; it ensures that speeds calculated from the RH condition near contact discontinuities are in fact eigenvalues of the matrix $\hat{\mathbf{A}}_{i-1/2}$ and satisfy \cref{eq:sumFlux}.

The derivation of the Roe linearized matrix follows the general approach, introduced in \cite{roe1981approximate}, which includes a non-linear transformation of the state variables for a closed-form computation of the arising integrals.~Alternatively, a ``reverse engineering'' approach \cite{ketcheson2020riemann} can be followed, in which one sets $\hat{\mathbf{A}}_{i-1/2}(\mathbf{q}_l,\mathbf{q}_r)=\partial_{\mathbf{u}} \mathbf{f}(\mathbf{\bar{q}})$ and seeks to find some \textit{average} state $\mathbf{\bar{q}}$ as a function of $\mathbf{q}_r$ and $\mathbf{q}_l$, such that the, related to RH condition, third property is satisfied.~These techniques usually lead to same Roe matrices for many systems, albeit they are not always applicable; e.g., the second technique is not applicable for the fourth benchmark problem of PW equations (see \Cref{app:RoePayne}).~This is because the flux function $\mathbf{f}(\mathbf{u})$ may not allow either the closed-form computation of the integrals involved in the first technique, or finding the average state $\mathbf{\bar{q}}$ involved in the second one; both techniques are presented in detail in \Cref{app:linRiem}.

Finally, we note that while in many cases the Roe matrix $\hat{\mathbf{A}}_{i-1/2}$ is successfully used (with entropy fixes) for solving the forward problem, such kind of linearization may fail.~In these cases, other approximate Riemann solvers should be used, e.g. the HLLE solver \citep{harten1983upstream,einfeldt1988godunov}, which effectively reduces to the Roe one when dealing with shock waves and does not require entropy fix when dealing with rarefactions.~The disadvantage of the HLLE solver is that it is modelled by only two waves (i.e., $M_w=2$ instead of $M_w=D$ in \cref{eq:HRupdate2}), which may affect the resolution of systems with more than two variables.~A detailed presentation of the HLLE approximate Riemann solvers is also provided in \Cref{app:linRiem}.

\subsection{Solution of the inverse problem with GoRINNs}
\label{sb:InvGNN}
Here, we present \textit{GoRINNs} for the solution of the inverse problem for scalar or systems of conservation laws described by hyperbolic PDEs.~As already discussed, this is a hybrid approach combining the use of NNs with the high-resolution Godunov-type numerical methods presented in \Cref{sb:FP_god}.

Let us first revisit the system of hyperbolic PDEs in \cref{eq:unPDE} with the unknown flux functionals term $\mathbf{f}_U(\mathbf{u}, \mathcal{N}(\mathbf{u}))$.~For the approximation of the unknown functional, we consider a shallow NN, $\mathcal{N}(\mathbf{u}):=\mathcal{N}(\mathbf{u},\mathbf{P};\mathbf{H}_1):\mathbb{R}^D\rightarrow\mathbb{R}^D$, with network parameters $\mathbf{P}$ (weights and biases of each layer) and hyper-parameters $\mathbf{H}_1$ (e.g., activation function, learning rate, the number of epochs, etc.).~To solve the inverse problem with GoRINNs, we seek to find the parameters $\mathbf{P}$ of the NN $\mathcal{N}(\mathbf{u},\mathbf{P};\mathbf{H}_1)$, such that the employment of the high-resolution Godunov-type method of \cref{eq:HRupdate} to the system in \cref{eq:unPDE} generates the available data for the values of the state variables.

\paragraph{GoRINNs as conservative schemes} Recall that in order to employ a Godunov-type method, it is required to numerically compute the speeds and waves through an approximate Riemann solver of the linearized system in \cref{eq:linRiem}.~For guaranteeing a conservative and consistent with the non-linear system numerical scheme, we derive a Roe linearized matrix $\hat{\mathbf{A}}_{i-1/2}(\mathbf{q}_l,\mathbf{q}_r)$ for the unknown system of PDEs in \cref{eq:unPDE}, which is required to satisfy the three properties discussed in \Cref{sb:FP_god}.~For the derivation of the Roe matrix, the general approach of \cite{leveque2002finite} cannot be employed, since the integrals of the unknown flux term at transformed variables cannot be computed in a closed form.~Hence, we follow a ``reverse engineering'' approach as proposed in \cite{ketcheson2020riemann}, which results in the following proposition:
\begin{proposition}
Consider the homogenous system of non-linear PDEs of \cref{eq:unPDE} with the partially known flux function $\mathbf{f}(\mathbf{u})$.~Let $\mathbf{q}_l, \mathbf{q}_r \in \mathbb{R}^D$ be an arbitrary pair of left and right states of the related local Riemann problem and $\mathbf{\bar{q}}=\mathbf{h}(\mathbf{q}_l,\mathbf{q}_r)$ be an average state of them, such that $\mathbf{\hat{q}}=\mathbf{h}(\mathbf{\hat{q}},\mathbf{\hat{q}})$ for $\mathbf{\hat{q}}\in \mathbb{R}^D$.~Then, if the conditions:
\begin{align}
\partial_{\mathbf{u}} \mathbf{f}_K(\bar{\mathbf{q}}) \cdot (\mathbf{q}_r-\mathbf{q}_l) & = \mathbf{f}_K(\mathbf{q}_r) -\mathbf{f}_K(\mathbf{q}_l), \label{eq:Roe_NN1} \\
\left( \partial_{\mathbf{u}} \mathbf{f}_U(\bar{\mathbf{q}},\mathcal{N}(\bar{\mathbf{q}})) + \partial_{\mathcal{N}} \mathbf{f}_U(\bar{\mathbf{q}},\mathcal{N}(\bar{\mathbf{q}})) \cdot \partial_{\mathbf{u}}\mathcal{N}(\bar{\mathbf{q}}) \right) \cdot (\mathbf{q}_r-\mathbf{q}_l) & =  \mathbf{f}_U(\mathbf{q}_r,\mathcal{N}(\mathbf{q}_r)) -\mathbf{f}(\mathbf{q}_l,\mathcal{N}(\mathbf{q}_l)), \label{eq:Roe_NN2}
\end{align}
hold, the approximate Riemann solver defined by the Roe matrix $\hat{\mathbf{A}}_{i-1/2}(\mathbf{q}_l,\mathbf{q}_r)=\partial_{\mathbf{u}} \mathbf{f}(\bar{\mathbf{q}})$ satisfies the RH condition, and thus ensures that the Godunov-type numerical method in \cref{eq:HRupdate} is conservative; the speeds and waves are computed from $\hat{\mathbf{A}}_{i-1/2}(\mathbf{q}_l,\mathbf{q}_r)$. 
\label{prop1}
\end{proposition}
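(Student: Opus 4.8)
The plan is to verify, one at a time, the three Roe-solver properties listed in \cref{sb:FP_god} for the candidate linearization $\hat{\mathbf{A}}_{i-1/2}(\mathbf{q}_l,\mathbf{q}_r)=\partial_{\mathbf{u}}\mathbf{f}(\bar{\mathbf{q}})$, and then to deduce discrete conservation from the third one exactly as recalled around \cref{eq:sumFlux}. The starting observation is that, since $\mathbf{f}=\mathbf{f}_K+\mathbf{f}_U$ with $\mathbf{f}_U$ depending on $\mathbf{u}$ both explicitly (first slot) and implicitly through $\mathcal{N}(\mathbf{u})$ (second slot), differentiation is linear and the chain rule gives, at any state $\bar{\mathbf{q}}$,
\begin{equation}
\partial_{\mathbf{u}}\mathbf{f}(\bar{\mathbf{q}}) = \partial_{\mathbf{u}}\mathbf{f}_K(\bar{\mathbf{q}}) + \Big( \partial_{\mathbf{u}}\mathbf{f}_U(\bar{\mathbf{q}},\mathcal{N}(\bar{\mathbf{q}})) + \partial_{\mathcal{N}}\mathbf{f}_U(\bar{\mathbf{q}},\mathcal{N}(\bar{\mathbf{q}}))\cdot\partial_{\mathbf{u}}\mathcal{N}(\bar{\mathbf{q}}) \Big),
\end{equation}
the parenthesised term being the total Jacobian of $\mathbf{u}\mapsto\mathbf{f}_U(\mathbf{u},\mathcal{N}(\mathbf{u}))$ evaluated at $\bar{\mathbf{q}}$; this is well defined because $\mathbf{f}_K$ and $\mathbf{f}_U$ are continuously differentiable by hypothesis and $\mathcal{N}(\cdot,\mathbf{P};\mathbf{H}_1)$ is a smooth shallow network.

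First I would establish \textbf{Property 3 (the Rankine-Hugoniot condition)}. Multiplying the identity above on the right by $\mathbf{q}_r-\mathbf{q}_l$ and inserting the two hypotheses \cref{eq:Roe_NN1,eq:Roe_NN2} termwise yields
\begin{equation}
\hat{\mathbf{A}}_{i-1/2}(\mathbf{q}_l,\mathbf{q}_r)\cdot(\mathbf{q}_r-\mathbf{q}_l) = \big(\mathbf{f}_K(\mathbf{q}_r)-\mathbf{f}_K(\mathbf{q}_l)\big) + \big(\mathbf{f}_U(\mathbf{q}_r,\mathcal{N}(\mathbf{q}_r))-\mathbf{f}_U(\mathbf{q}_l,\mathcal{N}(\mathbf{q}_l))\big) = \mathbf{f}(\mathbf{q}_r)-\mathbf{f}(\mathbf{q}_l),
\end{equation}
which is precisely the third property. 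Next, \textbf{Property 1 (consistency)} follows from the average-state hypothesis: $\bar{\mathbf{q}}=\mathbf{h}(\mathbf{q}_l,\mathbf{q}_r)$ with $\mathbf{h}$ continuous and $\mathbf{h}(\hat{\mathbf{q}},\hat{\mathbf{q}})=\hat{\mathbf{q}}$, so $\mathbf{q}_l,\mathbf{q}_r\to\hat{\mathbf{q}}$ forces $\bar{\mathbf{q}}\to\hat{\mathbf{q}}$, and continuity of $\partial_{\mathbf{u}}\mathbf{f}$ then gives $\hat{\mathbf{A}}_{i-1/2}\to\partial_{\mathbf{u}}\mathbf{f}(\hat{\mathbf{q}})=\mathbf{J}(\hat{\mathbf{q}})$. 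Finally, \textbf{Property 2 (hyperbolicity of the linearization)}: since \cref{eq:unPDE} is hyperbolic (equivalently, $\mathbf{J}(\mathbf{u})=\partial_{\mathbf{u}}\mathbf{f}(\mathbf{u})$ is diagonalizable with real eigenvalues for every $\mathbf{u}\in\mathbb{R}^D$), applying this at $\mathbf{u}=\bar{\mathbf{q}}$ shows $\hat{\mathbf{A}}_{i-1/2}$ has that spectral structure, and its eigenvalues and right eigenvectors supply the wave speeds $s^p_{i-1/2}$ and waves $\mathbf{W}^p_{i-1/2}$ used in \cref{eq:HRupdate1,eq:HRupdate}.

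To close the argument I would connect Property 3 to discrete conservation. With $\mathbf{q}_l=\mathbf{Q}^n_{i-1}$, $\mathbf{q}_r=\mathbf{Q}^n_i$ and the eigendecomposition of $\hat{\mathbf{A}}_{i-1/2}$, the wave splitting $\Delta\mathbf{Q}_{i-1/2}=\sum_{p}\mathbf{W}^p_{i-1/2}$ together with \cref{eq:HRupdate1} gives $\mathcal{A}^-\Delta\mathbf{Q}_{i-1/2}+\mathcal{A}^+\Delta\mathbf{Q}_{i-1/2}=\sum_{p}s^p_{i-1/2}\mathbf{W}^p_{i-1/2}=\hat{\mathbf{A}}_{i-1/2}\,\Delta\mathbf{Q}_{i-1/2}$, which by Property 3 equals $\mathbf{f}(\mathbf{Q}^n_i)-\mathbf{f}(\mathbf{Q}^n_{i-1})$; this is exactly \cref{eq:sumFlux}, so the Godunov-type update \cref{eq:HRupdate} can be written in conservation form and is conservative. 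The only genuinely delicate point — and where I would concentrate care — is the chain-rule bookkeeping for $\mathbf{f}_U$: one must keep the explicit $\mathbf{u}$-dependence ($\partial_{\mathbf{u}}\mathbf{f}_U$, second slot frozen) separate from the implicit dependence through $\mathcal{N}$ ($\partial_{\mathcal{N}}\mathbf{f}_U\cdot\partial_{\mathbf{u}}\mathcal{N}$), so that the left-hand side of \cref{eq:Roe_NN2} is indeed the total Jacobian of $\mathbf{u}\mapsto\mathbf{f}_U(\mathbf{u},\mathcal{N}(\mathbf{u}))$ at $\bar{\mathbf{q}}$ acting on $\mathbf{q}_r-\mathbf{q}_l$; everything else is linear algebra plus the stated continuity and hyperbolicity hypotheses. (I would also correct the apparent typo on the right-hand side of \cref{eq:Roe_NN2}, where $\mathbf{f}(\mathbf{q}_l,\mathcal{N}(\mathbf{q}_l))$ should read $\mathbf{f}_U(\mathbf{q}_l,\mathcal{N}(\mathbf{q}_l))$.)
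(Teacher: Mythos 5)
Your proposal is correct and its core step is the same as the paper's: apply the chain rule to split $\partial_{\mathbf{u}}\mathbf{f}(\bar{\mathbf{q}})$ into the $\mathbf{f}_K$ part and the total Jacobian of $\mathbf{u}\mapsto\mathbf{f}_U(\mathbf{u},\mathcal{N}(\mathbf{u}))$, then observe that summing \cref{eq:Roe_NN1,eq:Roe_NN2} gives exactly the Rankine--Hugoniot identity $\hat{\mathbf{A}}_{i-1/2}\cdot(\mathbf{q}_r-\mathbf{q}_l)=\mathbf{f}(\mathbf{q}_r)-\mathbf{f}(\mathbf{q}_l)$ (you also correctly spot the typo $\mathbf{f}(\mathbf{q}_l,\mathcal{N}(\mathbf{q}_l))$ for $\mathbf{f}_U(\mathbf{q}_l,\mathcal{N}(\mathbf{q}_l))$). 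The extra verifications of consistency and of \cref{eq:sumFlux} are fine but not claimed by the proposition; only note that your Property~2 argument presumes the learned system is hyperbolic, which the paper deliberately does not assert here and instead enforces separately during optimization.
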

\begin{proof}
Using the definition of the known and unknown functions $\mathbf{f}(\mathbf{u})=\mathbf{f}_K(\mathbf{u}) + \mathbf{f}_U(\mathbf{u}, \mathcal{N}(\mathbf{u}))$ included in the non-linear system of PDEs in \cref{eq:unPDE}, the linearized matrix $\hat{\mathbf{A}}_{i-1/2}(\mathbf{q}_l,\mathbf{q}_r)=\partial_{\mathbf{u}} \mathbf{f}(\bar{\mathbf{q}})$ is written as:
\begin{equation}
    \hat{\mathbf{A}}_{i-1/2}(\mathbf{q}_l,\mathbf{q}_r) = \partial_{\mathbf{u}} \left(\mathbf{f}_K(\bar{\mathbf{q}}) + \mathbf{f}_U(\bar{\mathbf{q}},\mathcal{N}(\bar{\mathbf{q}})) \right) = \partial_{\mathbf{u}} \mathbf{f}_K(\bar{\mathbf{q}}) + \partial_{\mathbf{u}} \mathbf{f}_U(\bar{\mathbf{q}},\mathcal{N}(\bar{\mathbf{q}})) + \partial_{\mathcal{N}} \mathbf{f}_U(\bar{\mathbf{q}},\mathcal{N}(\bar{\mathbf{q}})) \cdot \partial_{\mathbf{u}}\mathcal{N}(\bar{\mathbf{q}}),
    \label{eq:RoeGNN}
\end{equation}
where the chain rule was employed at the last step.~According to \citep{roe1981approximate}, the sufficient condition for rendering the Godunov-type numerical method in \cref{eq:HRupdate} conservative is the RH condition: 
\begin{equation}
    \hat{\mathbf{A}}_{i-1/2}(\mathbf{q}_l,\mathbf{q}_r) \cdot (\mathbf{q}_r-\mathbf{q}_l) = \mathbf{f}(\mathbf{q}_r) -\mathbf{f}(\mathbf{q}_l),
\end{equation}
which, upon substitution of \cref{eq:RoeGNN} yields:
\begin{equation*}
    \left(\partial_{\mathbf{u}} \mathbf{f}_K(\bar{\mathbf{q}}) + \partial_{\mathbf{u}} \mathbf{f}_U(\bar{\mathbf{q}},\mathcal{N}(\bar{\mathbf{q}})) + \partial_{\mathcal{N}} \mathbf{f}_U(\bar{\mathbf{q}},\mathcal{N}(\bar{\mathbf{q}})) \cdot \partial_{\mathbf{u}}\mathcal{N}(\bar{\mathbf{q}})\right) \cdot (\mathbf{q}_r-\mathbf{q}_l) = \mathbf{f}_K(\mathbf{q}_r) -\mathbf{f}_K(\mathbf{q}_l) + \mathbf{f}_U(\mathbf{q}_r,\mathcal{N}(\mathbf{q}_r)) -\mathbf{f}(\mathbf{q}_l,\mathcal{N}(\mathbf{q}_l)).
\end{equation*}
The latter equation should hold for any set of parameters included in $\mathbf{f}(\mathbf{u})$, both for known and unknown terms (e.g., the network parameters found in $\mathcal{N}(\mathbf{u})$).~Its decomposition into known and unknown terms results in the sufficient for a conservative numerical method conditions of \cref{eq:Roe_NN1,eq:Roe_NN2}.
\end{proof}

The construction of the Roe matrix $\hat{\mathbf{A}}_{i-1/2}$ with the conditions of \Cref{prop1} not only guarantees a conservative numerical scheme, but also satisfies by definition the first property regarding consistency in \Cref{sb:FP_god}, since $\hat{\mathbf{A}}_{i-1/2}(\mathbf{q}_l,\mathbf{q}_r) \rightarrow \partial_{\mathbf{u}} \mathbf{f}( \hat{\mathbf{q}})$ as $\mathbf{q}_l, \mathbf{q}_r \rightarrow \hat{\mathbf{q}}$.~However, for the construction of $\hat{\mathbf{A}}_{i-1/2}$, one needs to determine $\mathbf{\bar{q}}=\mathbf{h}(\mathbf{q}_l,\mathbf{q}_r)$ through the conditions in \cref{eq:Roe_NN1,eq:Roe_NN2}.~In fact, the latter includes both the unknown flux term and the derivatives of the NN w.r.t. the state variables.~Hence, $\bar{\mathbf{q}}$ can be partially determined as a function of $\mathbf{q}_l$ and $\mathbf{q}_r$ for only some of its components by solving \cref{eq:Roe_NN1}.~For the rest of the components, we simply assume that they can be expressed by the arithmetic average $\bar{\mathbf{q}}=(\mathbf{q}_r+\mathbf{q}_l)/2$ and require \cref{eq:Roe_NN2} to hold for the assumed $\bar{\mathbf{q}}$.

Let's now assume a set of data points $\mathbf{Q}_i^n$ at the $n$-th time step, discretized all over the 1-dim. spatial domain $[x_l,x_r]$ in $i=1,\ldots,N$ FV cells.~Setting $\mathbf{q}_l=\mathbf{Q}^n_{i-1}$ and $\mathbf{q}_r=\mathbf{Q}^n_i$, one can estimate the averages $\bar{\mathbf{Q}}_i^n$ for every $i=1,\ldots,N$ by (i) computing the components of $\bar{\mathbf{q}}$ that can be determined from \cref{eq:Roe_NN1} and (ii) making the assumption of the arithmetic average for the remaining components, as $\bar{\mathbf{Q}}_i^n=(\mathbf{Q}^n_{i-1}+\mathbf{Q}^n_i)/2$.~Then, according to \Cref{prop1}, the linearized Riemann solver of the unknown system of PDEs in \cref{eq:unPDE} is conservative if the condition in \cref{eq:Roe_NN2} holds for every FV cell, that is:
\begin{equation}
    \left( \partial_{\mathbf{u}} \mathbf{f}_U(\bar{\mathbf{Q}}_i^n,\mathcal{N}(\bar{\mathbf{Q}}_i^n)) + \partial_{\mathcal{N}} \mathbf{f}_U(\bar{\mathbf{Q}}_i^n,\mathcal{N}(\bar{\mathbf{Q}}_i^n)) \cdot \partial_{\mathbf{u}}\mathcal{N}(\bar{\mathbf{Q}}_i^n) \right) \cdot (\mathbf{Q}^n_i-\mathbf{Q}^n_{i-1}) =  \mathbf{f}_U(\mathbf{Q}^n_i,\mathcal{N}(\mathbf{Q}^n_i)) -\mathbf{f}_U(\mathbf{Q}^n_{i-1},\mathcal{N}(\mathbf{Q}^n_{i-1})),
    \label{eq:RH_NN}
\end{equation}
where the NN function is computed at $\bar{\mathbf{Q}}_i^n$, $\mathbf{Q}^n_i$ and $\mathbf{Q}^n_{i-1}$.~Finding the networks parameters that satisfy  \cref{eq:RH_NN} ensures that the proposed GoRINNs  are conservative.

\paragraph{High-resolution GoRINNs} Given the linearized Riemann solver $\hat{\mathbf{A}}_{i-1/2}(\mathbf{q}_l,\mathbf{q}_r) = \partial_{\mathbf{u}} \mathbf{f}(\bar{\mathbf{q}})$, the speeds and waves for the Godunov method can be estimated; for a Roe solver these are given by the eigenvalues and eigenvectors of $\hat{\mathbf{A}}_{i-1/2}$, while for an HLLE solver, one needs to compute a middle state of the Riemann problem as shown in \Cref{app:HLLELin}.~In particular, for the high-resolution scheme in \cref{eq:HRupdate}, the update of $i$-th cell requires (i) the speeds $s_{i\pm1/2}^p$ and waves $\mathbf{W}^p_{i\pm1/2}$, which are functions of $\mathbf{Q}^n_{i-1}$, $\mathbf{Q}^n_i$ and $\mathbf{Q}^n_{i+1}$ and (ii) the flux-limited version of the waves, $\tilde{\mathbf{W}}^p_{i-1/2}$ and $\tilde{\mathbf{W}}^p_{i+1/2}$, which are functions of $\mathbf{Q}^n_{i+a}$ for $a=-2,\ldots,2$, since $\tilde{\mathbf{W}}^p_{i-1/2}$ is a function of $\mathbf{W}^p_{i+1/2}$ and $\mathbf{W}^p_{i-3/2}$, and $\tilde{\mathbf{W}}^p_{i+1/2}$ is a function of $\mathbf{W}^p_{i+3/2}$ and $\mathbf{W}^p_{i-1/2}$.~Hence, the FV high-resolution Godunov-type method in \cref{eq:HRupdate} can be summarized as an update $\hat{\mathbf{Q}}_i^{n+1}$ of the form:
\begin{equation}
    \hat{\mathbf{Q}}_i^{n+1} = FVG_{HR}\left(\mathbf{Q}^n_{i-2},\mathbf{Q}^n_{i-2},\mathbf{Q}^n_{i},\mathbf{Q}^n_{i+1},\mathbf{Q}^n_{i+2},\mathbf{P};\mathbf{H}_1,\mathbf{H}_2 \right),
    \label{eq:FVGupdate}
\end{equation}
which depends on the parameters $\mathbf{P}$ and hyperparameters $\mathbf{H}_1$ of the ANN (since the speeds and waves are computed via $\mathcal{N}(\mathbf{u},\mathbf{P};\mathbf{H}_1)$), but also on the  hyperparameters $\mathbf{H}_2$ of the Godunov scheme, such as the flux-limited function, the type of solver (Roe, HLLE or others), etc.~Note that the numerical scheme of \cref{eq:FVGupdate} requires information from the boundary conditions to update the $i=1,2,N-1,N$ nodes.~As already discussed, two ghost cells are added to the left and right boundaries, the values of which are determined by the type of boundary condition; here, we assume that such information is available.~Finally, note that the numerical scheme of \cref{eq:FVGupdate} also incorporates the numerical time integration scheme of the inhomogeneous splitted part, in cases where the system in \cref{eq:unPDE} includes source terms.~The pseudocode for performing the update of \cref{eq:FVGupdate} is presented in detailed in \Cref{algFVG}.
  
\paragraph{GoRINNs optimization problem} Let's now assume the availability of $n_t$ pairs of data points $\mathbf{Q}_i^n, \mathbf{Q}_i^{n+1}$ of the solution operator, over the entire 1-dim. spatial domain $[x_l,x_r]$; i.e., for $i=1,\ldots,N$.~Then, the solution of the inverse problem is provided by the functional $\mathbf{f}_U(\mathbf{u},\mathcal{N}(\mathbf{u}))$, for which the employment of the high-resolution Godunov-type update at $\mathbf{Q}_i^n$ approximates $\mathbf{Q}_i^{n+1}$, under a conservative linearized Riemann solver.~Hence, the numerical approximation of the unknown functional is provided by the parameters $\mathbf{P}$ of the NN $\mathcal{N}(\mathbf{u},\mathbf{P};\mathbf{H}_1)$, computed by solving the optimization problem:
\begin{equation}
    \min_{\mathbf{P}} \mathcal{L} (\mathbf{P}; \mathbf{H}_1,\mathbf{H}_2) := \sum_{n=1}^{n_t} \left( \lVert \mathcal{L}_{FVG}\left(\mathbf{Q}^{n+1},\mathbf{Q}^n, \mathbf{P};\mathbf{H}_1,\mathbf{H}_2 \right) \rVert^2 + \lVert  \mathcal{L}_{RH}\left(\mathbf{Q}^n,\mathbf{P};\mathbf{H}_1 \right) \rVert^2 \right),
    \label{eq:OptGen}
\end{equation}
where $\mathcal{L}_{FVG}(\cdot)$ is the data loss function of the FV high-resolution Godunov update in \cref{eq:FVGupdate}:
\begin{equation}
    \mathcal{L}_{FVG}\left(\mathbf{Q}^{n+1},\mathbf{Q}^n, \mathbf{P};\mathbf{H}_1,\mathbf{H}_2 \right) = \sum_{i=1}^N \left( \mathbf{Q}^{n+1}_i-FVG_{HR}\left(\mathbf{Q}^n_{i-2},\mathbf{Q}^n_{i-2},\mathbf{Q}^n_{i},\mathbf{Q}^n_{i+1},\mathbf{Q}^n_{i+2},\mathbf{P};\mathbf{H}_1,\mathbf{H}_2 \right)\right),
    \label{eq:Opt1}
\end{equation}
and $\mathcal{L}_{RH}(\cdot)$ is the loss function soft constraining the satisfaction of the RH condition in \cref{eq:RH_NN}, as:
\begin{multline}
    \mathcal{L}_{RH}\left(\mathbf{Q}^n, \mathbf{P};\mathbf{H}_1 \right) = \sum_{i=1}^N \Big( \left( \partial_{\mathbf{u}} \mathbf{f}_U(\bar{\mathbf{Q}}_i^n,\mathcal{N}(\bar{\mathbf{Q}}_i^n,\mathbf{P};\mathbf{H}_1)) + \partial_{\mathcal{N}} \mathbf{f}_U(\bar{\mathbf{Q}}_i^n,\mathcal{N}(\bar{\mathbf{Q}}_i^n,\mathbf{P};\mathbf{H}_1)) \cdot \partial_{\mathbf{u}}\mathcal{N}(\bar{\mathbf{Q}}_i^n,\mathbf{P};\mathbf{H}_1) \right) \cdot \\ \cdot (\mathbf{Q}^n_i-\mathbf{Q}^n_{i-1}) - \left(  \mathbf{f}_U(\mathbf{Q}^n_i,\mathcal{N}(\mathbf{Q}^n_i,\mathbf{P};\mathbf{H}_1)) -\mathbf{f}_U(\mathbf{Q}^n_{i-1},\mathcal{N}(\mathbf{Q}^n_{i-1},\mathbf{P};\mathbf{H}_1)) \right) \Big),
    \label{eq:Opt2}
\end{multline}
for ensuring a conservative scheme.~Here $\mathbf{Q}^n$ denotes the values of state variables ath the $n$-th time step for all $i=1,\ldots,N$ cells.~Note that for minimizing the loss function in \cref{eq:Opt2}, it is first required to compute the average states $\bar{\mathbf{Q}}_i^n$ from the data, as discussed after \cref{eq:Roe_NN2}.

\begin{algorithm}
\caption{GoRINNs 
implementation} \label{algFVG}
\begin{algorithmic}[1]
\Require Flux term $\mathbf{f}_K(\mathbf{u}) + \mathbf{f}_U(\mathbf{u}, \mathcal{N}(\mathbf{u},\mathbf{P};\mathbf{H}_1))$ and source term $\mathbf{s}(\mathbf{u})$ \Comment{Known and unknown terms in \cref{eq:unPDE}}
\Require State variables $\mathbf{Q}^n_{i-2},\mathbf{Q}^n_{i-1},\mathbf{Q}^n_{i},\mathbf{Q}^n_{i+1},\mathbf{Q}^n_{i+2}$, NN parameters $\mathbf{P}$ and hyperparameters $\mathbf{H}_1$, $\mathbf{H}_2$
\State Get $D$, $N$, $\Delta t$, $\Delta x$, flux-limiter function $\phi(\theta)$, and type of boundary conditions included in $\mathbf{H}_2$
\If{$i=1,2,N-1,N$}
\State Set values at required ghost cells $\mathbf{Q}^n_{-1},\mathbf{Q}^n_{0},\mathbf{Q}^n_{N+1},\mathbf{Q}^n_{N+2}$ according to boundary conditions 
\EndIf
\LComment{Get numerical fluxes and high-resolution corrections at left and right interfaces $i-1/2$ and $i+1/2$ of the $i$-th cell}
\State Get left $\mathcal{A}^+ \Delta \mathbf{Q}_{i-1/2},\bar{\mathbf{F}}_{i-1/2}$ $\gets$\textproc{NumerFlux$\&$HRCor}$(\mathbf{Q}^n_{i-2},\mathbf{Q}^n_{i-1},\mathbf{Q}^n_{i},\mathbf{Q}^n_{i+1},\mathbf{P},\mathbf{H}_1,\mathbf{H}_2)$
\State Get right $\mathcal{A}^- \Delta \mathbf{Q}_{i+1/2},\bar{\mathbf{F}}_{i+1/2}$ $\gets$\textproc{NumerFlux$\&$HRCor}$(\mathbf{Q}^n_{i-1},\mathbf{Q}^n_{i},\mathbf{Q}^n_{i+1},\mathbf{Q}^n_{i+2},\mathbf{P},\mathbf{H}_1,\mathbf{H}_2)$
\State Get update $\mathbf{Q}^{*,n+1}_i$ \Comment{Perform update in \cref{eq:HRupdate}}
\If{$\mathbf{s}(\mathbf{u})\neq 0$} 
\State Integrate $d\mathbf{u}/dt=\mathbf{s}(\mathbf{u})$ with initial condition $\mathbf{Q}^{*,n+1}_i$ for $\Delta t$ 
\State Set $\hat{\mathbf{Q}}^{n+1}_i \gets$ numerical solution \Comment{Update in \cref{eq:FVGupdate}}
\Else
\State Set $\hat{\mathbf{Q}}^{n+1}_i\gets \mathbf{Q}^{*,n+1}_i$ \Comment{Update in \cref{eq:FVGupdate}}
\EndIf
\end{algorithmic}
\begin{algorithmic}[0]
\State \textcolor{gray}{$\triangleright$~\textit{Procedure to compute the numerical fluxes and high-resolution correction at $i-1/2$ interface, for given variable states $\mathbf{Q}^n_{i-2},\mathbf{Q}^n_{i-1},\mathbf{Q}^n_{i},\mathbf{Q}^n_{i+1}$ at $i-2$, $i-1$, $i$ and $i+1$ cells, NN parameters $\mathbf{P}$ and hyperparameters $\mathbf{H}_1$, $\mathbf{H}_2$}}
\end{algorithmic}
\begin{algorithmic}[1]
\Procedure{NumerFlux$\&$HRCor}{$\mathbf{Q}^n_{i-2},\mathbf{Q}^n_{i-1},\mathbf{Q}^n_{i},\mathbf{Q}^n_{i+1},\mathbf{P},\mathbf{H}_1,\mathbf{H}_2$}
\State Set left and right states $\mathbf{q}_l\gets \mathbf{Q}^n_{i-1}$ and $\mathbf{q}_r\gets \mathbf{Q}^n_{i}$ \Comment{Numerical fluxes at $i-1/2$ interface}
\State Get speeds and waves $s^p_{i-1/2}$, $\mathbf{W}^p_{i-1/2} \gets$\textproc{ApproxRiemannSolver}$(\mathbf{q}_l,\mathbf{q}_r,\mathbf{P},\mathbf{H}_1,\mathbf{H}_2)$
\State Compute numerical fluxes $\mathcal{A}^- \Delta \mathbf{Q}_{i-1/2}$ and $\mathcal{A}^+ \Delta \mathbf{Q}_{i-1/2}$ \Comment{By \cref{eq:HRupdate1}}
\For{$p=1,\ldots,M_w$} \Comment{High-resolution correction with flux-limiters}
\If{$s^p_{i-1/2}<0$} \Comment{Decide adjacent $k$-th cell}
\State Set $k \gets i$, $k_l \gets k$ and $k_r \gets k+1$
\ElsIf{$s^p_{i-1/2}>0$}
\State Set $k \gets i-1$, $k_l \gets k-1$ and $k_r \gets k$
\EndIf
\State Set left and right states $\mathbf{q}_l\gets \mathbf{Q}^n_{k_l}$ and $\mathbf{q}_r\gets \mathbf{Q}^n_{k_r}$ \Comment{Waves at $k-1/2$ interface}
\State Get waves $\mathbf{W}^p_{k-1/2} \gets$\textproc{ApproxRiemannSolver}$(\mathbf{q}_l,\mathbf{q}_r,\mathbf{P},\mathbf{H}_1,\mathbf{H}_2)$
\State Compute angle of waves $\theta^p \gets \left(\big(\mathbf{W}^{p}_{k-1/2}\big)^\top \cdot \mathbf{W}^p_{i-1/2}\right)/\left(\big(\mathbf{W}^{p}_{i-1/2}\big)^\top\cdot \mathbf{W}^p_{i-1/2}\right)$.
\State Compute wave-limited version $\tilde{\mathbf{W}}_{i-1/2}^p\gets \phi(\theta^p) \mathbf{W}^p_{i-1/2}$ via the flux-limiter function $\phi(\theta)$
\EndFor
\State Compute high-resolution correction $\bar{\mathbf{F}}_{i-1/2}$ \Comment{By \cref{eq:HRupdate2}}
\State \textbf{return} $\mathcal{A}^- \Delta \mathbf{Q}_{i-1/2}$, $\mathcal{A}^+ \Delta \mathbf{Q}_{i-1/2}$ and $\bar{\mathbf{F}}_{i-1/2}$ for $p=1,\ldots,M_w$
\EndProcedure
\end{algorithmic}
\begin{algorithmic}[0]
\State \textcolor{gray}{$\triangleright$~\textit{Procedure to compute the speeds and the waves provided by the approximate Riemann solver of the GoRINNs, for given left and right states $\mathbf{q}_l,\mathbf{q}_r$}, NN parameters $\mathbf{P}$ and hyperparameters $\mathbf{H}_1$, $\mathbf{H}_2$}
\end{algorithmic}
\begin{algorithmic}[1]
\Procedure{ApproxRiemannSolver}{$\mathbf{q}_l,\mathbf{q}_r,\mathbf{P},\mathbf{H}_1,\mathbf{H}_2$}
\State Compute $\bar{\mathbf{q}}$ from $\mathbf{q}_l,\mathbf{q}_r$  \Comment{Partially by \cref{eq:Roe_NN1} and partially by $(\mathbf{q}_l+\mathbf{q}_r)/2$}
\State Compute $\partial_{\mathbf{u}}\mathcal{N}(\bar{\mathbf{q}},\mathbf{P};\mathbf{H}_1)$
\State Form linearized Roe matrix $\hat{\mathbf{A}}_{i-1/2}(\mathbf{q}_l,\mathbf{q}_r)$ \Comment{By \cref{eq:RoeGNN}}
\State Set f$\_$sol $\gets$ Roe or HLLE from $\mathbf{H}_2$ \Comment{Approximate Riemann solver type: Roe or HLLE}
\If{f$\_$sol=ROE}
\State Compute eigenvalues $\hat{\lambda}^p_{i-1/2}$ and eigenvectors $\hat{\mathbf{r}}^p_{i-1/2}$ of $\hat{\mathbf{A}}_{i-1/2}$ \Comment{for $p=1,\ldots,D$}
\State Set speeds $s^p_{i-1/2} \gets \hat{\lambda}^p_{i-1/2}$ and waves $\mathbf{W}^p_{i-1/2} \gets \hat{\mathbf{r}}^p_{i-1/2}$
\ElsIf{f$\_$sol=HLLE}
\State Compute $\mathcal{N}(\mathbf{q}_l)=\mathcal{N}(\mathbf{q}_l,\mathbf{P};\mathbf{H}_1)$ and $\mathcal{N}(\mathbf{q}_r)=\mathcal{N}(\mathbf{q}_r,\mathbf{P};\mathbf{H}_1)$ and derivatives w.r.t. $\mathbf{u}$
\State Form Jacobians $\mathbf{J}(\mathbf{q}_l)=\partial_\mathbf{u} \left(\mathbf{f}_K(\mathbf{q}_l) + \mathbf{f}_U(\mathbf{q}_l, \mathcal{N}(\mathbf{q}_l)) \right)$ and $\mathbf{J}(\mathbf{q}_r)=\partial_\mathbf{u} \left(\mathbf{f}_K(\mathbf{q}_r) + \mathbf{f}_U(\mathbf{q}_r, \mathcal{N}(\mathbf{q}_r)) \right)$
\State Compute eigenvalues $\hat{\lambda}^p_{i-1/2}$, $\lambda^p_{i-1}$ and $\lambda^p_i$ of $\hat{\mathbf{A}}_{i-1/2}$, $\mathbf{J}(\mathbf{q}_l)$ and $\mathbf{J}(\mathbf{q}_r)$
\State Compute speeds $s^p_{i-1/2}$ for $p=1,2$ \Comment{By \cref{eq:HLLEs}}
\State Compute middle state $\mathbf{q}_m$ using $\mathcal{N}(\mathbf{q}_l)$ and $\mathcal{N}(\mathbf{q}_r)$ \Comment{In \cref{eq:HLLEms}}
\State Compute waves $\mathbf{W}^p_{i-1/2}$ \Comment{By \cref{eq:HLLEwaves}}
\EndIf
\State \textbf{return} $s^p_{i-1/2}$ and $\mathbf{W}^p_{i-1/2}$ for $p=1,\ldots,M_w$
\EndProcedure
\end{algorithmic}
\end{algorithm}

\paragraph{GoRINNs implementation} For the solution of the inverse problem with the GoRINNs, we employed a single-layer feedforward NN architecture with $D$-dim. inputs and 1-dim. output, since for all benchmark problems examined, a scalar flux function term was assumed unknown.~For $L$ neurons in the hidden layer, the output of the NN is written as:
\begin{equation}
    \mathcal{N}(\mathbf{u},\mathbf{P};\mathbf{H}_1) 
    = \mathbf{w}^{o\top} \boldsymbol{\phi} \left( \mathbf{W} \mathbf{u}  + \mathbf{b} \right) ,
    \label{eq:SFLNNsumf}
\end{equation}
where the network parameters $\mathbf{P} = [\mathbf{w}^{o},\mathbf{W},\mathbf{b}]^\top\in\mathbb{R}^{L(D+2)}$ include (i) the output weights $\mathbf{w}^{o} \in \mathbb{R}^{L}$ of the neurons between the output and the hidden layer, (ii) the internal weights $\mathbf{W}\in\mathbb{R}^{L\times D}$ between the hidden and the input layer, and (iii) the internal biases  $\mathbf{b} \in\mathbb{R}^{L}$ of the neurons in the hidden layer.~Output biases were not included, since they are not contributing in neither of the loss functions in \cref{eq:Opt1,eq:Opt2}; the derivative of the network w.r.t. $\mathbf{u}$ appears in the former, and the difference between two NNs outputs in the latter.~The outputs of the activated neurons are included in the column vector $\boldsymbol{\phi}\left( \mathbf{W} \mathbf{u}  + \mathbf{b} \right)\in\mathbb{R}^L$, for which the logistic sigmoid function is used as activation function.~As already discussed, all hyperparameters are included in $\mathbf{H}_1$.

For the solution of the GoRINNs optimization problem in \cref{eq:OptGen}, the two loss functions in \cref{eq:Opt1,eq:Opt2} were expressed in an $(D\times N\times n_t)$-dim. and an $(N\times  n_t)$-dim. vector of non-linear residuals, say $\mathcal{F}_{FVG}(\mathbf{P})$ and $\mathcal{F}_{RH}(\mathbf{P})$, respectively.~To minimize these residuals w.r.t. the unknown parameters $\mathbf{P}$, we used the Levenberg-Marquardt (LM) algorithm \cite{hagan1994training}.~LM is a deterministic gradient-based optimization algorithm, which updates the unknown parameters according to a varying - at each iteration - damping factor $\lambda$.~It thus, requires at each iteration the derivatives $\partial_{\mathbf{P}} \mathcal{F}_{FVG}$ and $\partial_{\mathbf{P}} \mathcal{F}_{RH}$.~Since symbolic differentiation of $\partial_{\mathbf{P}} \mathcal{F}_{FVG}$ is not feasible, we used numerical differentiation through finite differences.~Note that the residuals corresponding to \cref{eq:Opt1} can be computed independently for each $n=1,\ldots,n_t$, a feature that allowed us to perform parallel computations to form $\mathcal{F}_{FVG}(\mathbf{P})$ and thus accelerate convergence of the LM algorithm.

We highlight here an important feature of GoRINNs, related to the optimization problem and the LM algorithm employed for its solution.~As discussed, GoRINNs are specifically designed with a linearized Riemann solver that renders the scheme conservative and consistent with the unknown non-linear system of conservation laws in \cref{eq:unPDE}.~However, there is no guarantee that the linearized system is hyperbolic, i.e., that the GoRINNs Roe matrix in \cref{eq:RoeGNN} is diagonalizable with real eigenvalues.~To ensure hyperbolicity throughout the optimization process, one may include it as a hard constraint in the GoRINNs optimization problem in \cref{eq:OptGen}, the solution of which would then require constrained optimization algorithms.~Here, to avoid such algorithms, we initialized the LM algorithm with a random guess of the parameters $\mathbf{P}$, such that the Roe matrix in \cref{eq:RoeGNN} is diagonalizable with real eigenvalues.~For ensuring hyperbolicity throughout the optimization process with the LM algorithm, a high value of the initial damping factor should be avoided for averting big steps in the parameter space, which may lead to loss of hyperbolicity. 

\subsection{Implementation and numerical assessment of GoRINNs}
\label{sb:imp}
In this section, we provide all the practical details for the implementation and numerical assessment of GoRINNs, as employed for learning the unknown flux terms in the four benchmark problems considered.

For acquiring the data upon which GoRINNs are trained/tested on, we solved the forward problem for the known hyperbolic systems in the form of \cref{eq:genPDE} with well-posed initial data and boundary conditions.~In particular, we consider two types of boundary conditions; either (i) periodic, in which the flow exits from the left/right boundary and enters from the right/left with the same characteristics, or (ii) outflow, in which the flow exits from the boundaries without reflections.~For the former, we set the ghost cells to $\mathbf{Q}^n_{-1}=\mathbf{Q}^n_{N-1}$, $\mathbf{Q}^n_{0}=\mathbf{Q}^n_{N}$,  $\mathbf{Q}^n_{N+1}=\mathbf{Q}^n_{1}$ and $\mathbf{Q}^n_{N+2}=\mathbf{Q}^n_{2}$ at every time step, while for the latter, we use zero-order extrapolation by setting $\mathbf{Q}^n_{-1}=\mathbf{Q}^n_{0}=\mathbf{Q}^n_{1}$ and  $\mathbf{Q}^n_{N+1}=\mathbf{Q}^n_{N+2}=\mathbf{Q}^n_{N}$.~For each problem, we consider smooth Gaussian or sinusoidal initial conditions $\mathbf{u}(0,x)$, which lead to shock waves and/or rarefactions along the time interval considered.~For employing the high-resolution Godunov-type method described in \Cref{sb:FP_god}, we compute the waves and speeds via the Roe linearized matrices derived for each problem, except from the fourth benchmark problem of the PW equations, for which the HLLE solver was used.~In addition, in all problems, we used the Van-Leer flux-limiter function to derive the limited-versions of the waves in \cref{eq:HRupdate2}.~Finally, for the PW equations, where a source term is included in \cref{eq:genPDE}, we used the stiff numerical integration scheme provided by \textit{ode15s} of MATLAB ODE suite \cite{shampine1997matlab} for integrating the inhomogeneous part, as described in \Cref{sb:FP_god}. 

For each benchmark problem, we solved the forward problem for $4$ different initial conditions in the $t\in[0,t_{end}]$ interval with different, for each problem, $dt$ satisfying the CFL condition.~From the collected data, we randomly split the 15-15-70\% of the data to form the training, validation and testing sets; for the PW equations the split was 7.5-7.5-85\% because the CFL-satisfying $dt$ was smaller.~The resulting sets consist of pairs of data points $\mathbf{Q}_i^n,\mathbf{Q}_i^{n+1}\in\mathbb{R}^D$ for all cells $i=1,\ldots,N$ and for randomly selected time steps $n=1,\ldots,n_t$.~Only the training set was used for solving the GoRINNs optimization problem in \cref{eq:OptGen}, which resulted in the formation of a total of $(D+1)\times N\times n_t$ residuals.~We emphasize here that a larger training data set is not required, since only a few, complete time observations $n_t$ are enough for forming many residuals, even with coarse spatial discretizations.~For each problem, we determined $n_t$ so that the ratio between the resulting number of residuals and the total number of trainable parameters in $\mathbf{P}$ is $\sim5000$; the number of neurons was set to $L=5$, resulting to $5(D+2)$ trainable parameters.

As already discussed, for the GoRINNs optimization problem we used the LM algorithm over the training set, initialized with a random guess of parameters satisfying the hyperbolicity property of the Roe matrix in \cref{eq:RoeGNN} and an initial damping factor $\lambda_0=0.01$.~The stopping criteria of the LM algorithm were (i) a maximum number of $500$ epochs, (ii) a relative loss function ($l^2$) tolerance of $10^{-9}$ and (iii) a validation error below $10^{-9}$ for 3 consecutive iterations; the latter criterion was evaluated over the validation set in a frequency of $20$ training epochs.~To evaluate the convergence of the GoRINNs optimization problem, we computed the maximum and mean $l^1$ errors $\lVert \hat{\mathbf{Q}}_i^{n+1}-\mathbf{Q}_i^{n+1}\rVert_1$ and the $MSE(\hat{\mathbf{Q}}_i^{n+1},\mathbf{Q}_i^{n+1})$, over all cells $i=1,\ldots,N$, time steps $n=1,\ldots,n_t$, and state variable dimensions $D$; $\hat{\mathbf{Q}}_i^{n+1}$ is the predicted value of the state variable at the next time step, as shown in \cref{eq:FVGupdate}.~An outline of the algorithm for the solution of the GoRINNs optimization problem with the LM iterative scheme is provided in \Cref{algOptLM}.

\begin{algorithm}
\caption{Outline of the solution of the GoRINNs optimization problem in \cref{eq:OptGen} with the LM algorithm} \label{algOptLM}
\begin{algorithmic}[1]
\Require Flux term $\mathbf{f}_K(\mathbf{u}) + \mathbf{f}_U(\mathbf{u}, \mathcal{N}(\mathbf{u})$ and source term $\mathbf{s}(\mathbf{u})$ \Comment{Known and unknown terms in \cref{eq:unPDE}}
\Require State variables $\mathbf{Q}^n$ and $\mathbf{Q}^{n+1}$ for all $n_t$ pairs and for all cells \Comment{Training/validation data sets} 
\State Set hyperparameters $\mathbf{H}_2$ of the forward problem: $D$, $N$, $\Delta t$, $\Delta x$, flux-limiter function $\phi(\theta)$, type of boundary conditions and type of Riemann solver (ROE or HLLE)
\State Set NN hyperparameters $\mathbf{H}_1$ and randomly initialize NN parameters $\mathbf{P}$.
\State Set $isHyperbolic\gets False$
\While{$isHyperbolic=False$} \Comment{Ensure hyperbolicity at initial parameters $\mathbf{P}$}
    \For{$i=1,\ldots N$ and $n=1,\ldots,n_t$}
    \State Set $\mathbf{q}_l \gets \mathbf{Q}_{i-1}^n$ and $\mathbf{q}_r \gets \mathbf{Q}_{i}^n$; adjust with ghost cells when $i=1,2,N-1,N$
    \State Compute middle state $\bar{\mathbf{q}}$ from $\mathbf{q}_l,\mathbf{q}_r$  \Comment{Partially by \cref{eq:Roe_NN1} and partially by $(\mathbf{q}_l+\mathbf{q}_r)/2$}
    \State  Compute $\partial_{\mathbf{u}}\mathcal{N}(\bar{\mathbf{q}},\mathbf{P};\mathbf{H}_1)$ and form linearized Roe matrix $\hat{\mathbf{A}}_{i-1/2}(\mathbf{q}_l,\mathbf{q}_r)$ \Comment{By \cref{eq:RoeGNN}}
    \If{eigenvalues of $\hat{\mathbf{A}}_{i-1/2}$  are not real} Redraw randomly the parameters $\mathbf{P}$ and go to line 3 \EndIf
    \EndFor
    \State Set $isHyperbolic\gets True$
\EndWhile
\Statex  \textcolor{gray}{$\triangleright$~\textit{Residual minimization with the LM algorithm}}
\State Set hyperparameters $\mathbf{H}_2$ of the LM algorithm: stopping criteria and initial damping factor $\lambda_0$
\Repeat
\For{$n=1,\ldots,n_t$ and $i=1,\ldots,N$}
\State Compute $\hat{\mathbf{Q}}^{n+1}_i \gets FVG_{HR}\left(\mathbf{Q}^n_{i-2},\mathbf{Q}^n_{i-1},\mathbf{Q}^n_{i},\mathbf{Q}^n_{i+1},\mathbf{Q}^n_{i+2},\mathbf{P};\mathbf{H}_1,\mathbf{H}_2 \right)$ \Comment{By \cref{eq:FVGupdate} and \cref{algFVG}}
\State Calculate loss $\mathbf{Q}^{n+1}_i-\hat{\mathbf{Q}}^{n+1}_i$ \Comment{Loss function in \cref{eq:Opt1}}
\EndFor
\State Form $(D\times N\times n_t)$-dim. vector of residuals $\mathcal{F}_{FVG}(\mathbf{P})$
\For{$n=1,\ldots,n_t$ and $i=1,\ldots,N$}
\State Set $\mathbf{q}_l \gets \mathbf{Q}_{i-1}^n$ and $\mathbf{q}_r \gets \mathbf{Q}_{i}^n$; adjust with ghost cells when $i=1,2,N-1,N$
\State Compute middle state $\bar{\mathbf{q}}$ from $\mathbf{q}_l,\mathbf{q}_r$  \Comment{Partially by \cref{eq:Roe_NN1} and partially by $(\mathbf{q}_l+\mathbf{q}_r)/2$}
\State Calculate loss $ \left( \partial_{\mathbf{u}} \mathbf{f}_U(\bar{\mathbf{q}},\mathcal{N}(\bar{\mathbf{q}},\mathbf{P};\mathbf{H}_1)) + \partial_{\mathcal{N}} \mathbf{f}_U(\bar{\mathbf{q}},\mathcal{N}(\bar{\mathbf{q}},\mathbf{P};\mathbf{H}_1)) \cdot \partial_{\mathbf{u}}\mathcal{N}(\bar{\mathbf{q}},\mathbf{P};\mathbf{H}_1) \right)  \cdot (\mathbf{q}_r-\mathbf{q}_l) - \\ \left(  \mathbf{f}_U(\mathbf{q}_r,\mathcal{N}(\mathbf{q}_r,\mathbf{P};\mathbf{H}_1)) -\mathbf{f}_U(\mathbf{q}_l,\mathcal{N}(\mathbf{q}_l,\mathbf{P};\mathbf{H}_1)) \right)$ \Comment{Loss function in \cref{eq:Opt2}}
\EndFor
\State Form $(N\times  n_t)$-dim. vector of residuals $\mathcal{F}_{RH}(\mathbf{P})$
\State Obtain $\partial_{\mathbf{P}} \mathcal{F}_{FVG} (\mathbf{P})$ and $\partial_{\mathbf{P}} \mathcal{F}_{HR} (\mathbf{P})$ with numerical differentiation to form Jacobian matrix 
\State Perform LM update \cite{hagan1994training} to get updated values of NN parameters $\mathbf{P}$ and damping factor $\lambda$
\Until{convergence}
\State Get the final NN parameter values $\mathbf{P}$
\State Compute maximum and mean $l^1$ errors $\lVert \hat{\mathbf{Q}}_i^{n+1}-\mathbf{Q}_i^{n+1}\rVert_1$ and the $MSE(\hat{\mathbf{Q}}_i^{n+1},\mathbf{Q}_i^{n+1})$, for all $i=1,\ldots,N$, $n=1,\ldots,n_t$, and $D$
\end{algorithmic}
\end{algorithm}

Finally, to assess the numerical accuracy of GoRINNs, we solve the forward problem using the learned flux function $\mathbf{f}_U(\mathbf{u},\mathcal{N}(\mathbf{u}))$ in \cref{eq:unPDE} and compare its solution $\hat{\mathbf{Q}}_i^{n+1}$ with the ground truth solution $\mathbf{Q}_i^{n+1}$ of the forward problem for the analytically known flux function in \cref{eq:genPDE}.~To quantify the numerical accuracy, we use the absolute errors $\lvert \hat{\mathbf{Q}}_i^{n+1}-\mathbf{Q}_i^{n+1}\rvert$ all over cells and time steps for all the components of the state variables $D$.~Note that this error is not the same with the training/validation/testing errors, since the solution of the forward problem with the learned flux function $\mathbf{f}_U(\mathbf{u},\mathcal{N}(\mathbf{u}))$ also takes into account the cumulative  error introduced by the numerical scheme.

All simulations were carried out with a CPU Intel(R) Core(TM) i7-13700H @ 2.40 GHz, RAM 32.0 GB using MATLAB R2022b with 14 parallel cores.

\section{Numerical Results}
\label{sec:NR}

In this section, we assess the performance of GoRINNs for the solution of the inverse problem for four benchmark problems of hyperbolic conservation laws.~The first is the inviscid Burgers' equation \cite{burgers1948mathematical}, a  benchmark problem in fluid mechanics end beyond (see e.g., \citep{ketcheson2020riemann,leveque2002finite,dafermos2005hyperbolic}).~The second problem, the Lighthill-Whitham-Richards (LWR) equation \cite{lighthill1955kinematic,richards1956shock}, comes from the field of traffic flow, and shares many similarities with the Burgers' equation.~Here, we consider it to demonstrate the use of GoRINNs on finding closures when partial information of the unknown flux function are available.~For the next two problems, we consider two systems of $D=2$ non-linear conservation laws; the Shallow Water (SW) equations and the Payne-Whitham (PW) model.~The SW equations can be derived by ``depth-averaging'' the Navier-Stokes equations for an incompressible and inviscid fluid under hydrostatic pressure.~The SW equations consist one of the easiest systems to step up from scalar to systems of conservation laws.~For all the above problems, the Riemann problem can be solved exactly \citep{leveque2002finite,ketcheson2020riemann}.~This is not the case for the last benchmark problem, the PW model \citep{payne1971model,whitham2011linear} of traffic flow, which describes a class of macroscopic second-order traffic models that take into account the individual behavior of the driver \cite{treiber2013traffic,helbing1998generalized}.~We consider it here to demonstrate that GoRINNs can effectively handle hyperbolic systems with source terms.

For training the GoRINNs, we followed the procedure described in \cref{sb:imp} to collect training/validation/testing data sets from the numerical solutions of the analytically known hyperbolic PDEs.~A presentation of the PDEs, and details on data acquisition, are provided in \Cref{app:Bench}.~Next, we assessed the numerical accuracy of GoRINNs, by comparing the numerical solutions, provided by the high-resolution Godunov-type scheme in \cref{sb:FP_god}, for the learned by GoRINNs, hyperbolic equation(s) and the analytically known equation(s) in \Cref{app:Bench}. 

\subsection{Learning the Burgers' equation}
Given the data set generated on the basis of the numerical solution of the Burgers' equation in \cref{sb:B_FP}, a subset of which is depicted in \cref{fig:solsBR_LWR}a, we assume that the conservation law, from which they are originating, is written as:
\begin{equation}
    \partial_t u + \partial_x \mathcal{N}(u) = 0, \label{eq:unBR}
\end{equation}
where the flux function is completely unknown, thus corresponding to $f_K(u)=0$, $f_U(u,\mathcal{N}(u))=\mathcal{N}(u)$ and $s(u)=0$ in \cref{eq:unPDE}.~To derive a Roe matrix for \cref{eq:unBR} that satisfies the conditions of \Cref{prop1},  \cref{eq:Roe_NN1} does not provide any information about the average state $\bar{q}$, since $f_K(u)=0$.~We thus take the simple arithmetic average $\bar{q}=(q_l+q_r)/2$ and require \cref{eq:Roe_NN2} to hold, yielding:
\begin{equation*}
    \partial_u \mathcal{N}(\bar{q}) (q_r-q_l) = \mathcal{N}(q_r) - \mathcal{N}(q_l),
\end{equation*}
for any right and left states $q_r,q_l$ in the data set.~Then, the loss function in \cref{eq:Opt2} rendering GoRINNs conservative, reduces to:
\begin{equation}
    \mathcal{L}_{RH}\left(\mathbf{Q}^n, \mathbf{P};\mathbf{H}_1 \right) = \sum_{i=1}^N \Big( \left( \partial_{\mathbf{u}}\mathcal{N}(\bar{\mathbf{Q}}_i^n,\mathbf{P};\mathbf{H}_1) \right) \cdot (\mathbf{Q}^n_i-\mathbf{Q}^n_{i-1}) - \left(  \mathcal{N}(\mathbf{Q}^n_i,\mathbf{P};\mathbf{H}_1) -\mathcal{N}(\mathbf{Q}^n_{i-1},\mathbf{P};\mathbf{H}_1) \right) \Big), \label{eq:Opt2_BR}
\end{equation}
which was used for solving the GoRINNs optimization problem in \cref{eq:OptGen}.

~As already discussed in \cref{sb:imp}, we considered $L=5$ neurons in the hidden layer and a logistic sigmoid activation function for learning the physical flux function $\mathcal{N}(u)$ with GoRINNs.~The training results of GoRINNs are shown in \cref{tb:Errors}, where the maximum and mean $\lVert \hat{\mathbf{Q}}_i^{n+1}-\mathbf{Q}_i^{n+1}\rVert_1$ errors and the MSE$(\hat{\mathbf{Q}}_i^{n+1},\mathbf{Q}_i^{n+1})$ are reported \textit{over all} $N=100$ cells and $n_t=360$/$360$/$1680$ (training/validation/testing) time steps.~Clearly, the GoRINNs optimization scheme converges, since the maximum $\lVert \hat{\mathbf{Q}}_i^{n+1}-\mathbf{Q}_i^{n+1}\rVert_1$ error is of the order $1E-06$.

\begin{table}[!h]
    \centering
    \resizebox{\textwidth}{!}{
    \begin{tabular}{l| c c c | c c c | c c c}
    \toprule
    \textbf{Problem} &	\multicolumn{3}{c}{\textbf{max  $\lVert \hat{\mathbf{Q}}_i^{n+1}-\mathbf{Q}_i^{n+1}\rVert_1$}}		&	\multicolumn{3}{c}{\textbf{mean  $\lVert \hat{\mathbf{Q}}_i^{n+1}-\mathbf{Q}_i^{n+1}\rVert_1$}}	&	\multicolumn{3}{c}{\textbf{MSE$(\hat{\mathbf{Q}}_i^{n+1},\mathbf{Q}_i^{n+1})$}}	\\[2pt]
    &	Train	&	Val	&	Test	&	Train	&	Val	&	Test	&	Train	&	Val	&	Test	\\
    \midrule
    Burgers'	&	6.62E$-$06	&	5.38E$-$06	&	6.51E$-$06	&	3.60E$-$08	&	3.68E$-$08	&	3.63E$-$08	&	2.23E$-$14	&	2.23E$-$14	&	2.10E$-$14	\\
    LWR	&	5.15E$-$07	&	5.11E$-$07	&	5.34E$-$07	&	4.14E$-$09	&	3.68E$-$09	&	3.81E$-$09	&	3.30E$-$16	&	2.52E$-$16	&	2.98E$-$16	\\
    SW	&	1.08E$-$06	&	1.05E$-$06	&	1.04E$-$06	&	1.29E$-$08	&	1.21E$-$08	&	1.27E$-$08	&	1.35E$-$15	&	9.30E$-$16	&	1.24E$-$15	\\
    PW, $P(\rho,q)$	&	1.78E$-$03	&	1.78E$-$03	&	2.42E$-$03	&	6.77E$-$05	&	6.64E$-$05	&	6.56E$-$05	&	3.24E$-$08	&	3.24E$-$08	&	3.23E$-$08	\\
    PW, $P(\rho)$	&	8.71E$-$06	&	8.59E$-$06	&	9.51E$-$06	&	2.44E$-$07	&	2.45E$-$07	&	2.53E$-$07	&	3.73E$-$13	&	3.78E$-$13	&	3.94E$-$13	\\
    \bottomrule
    \end{tabular}}
    \caption{Training, validation and testing errors of the GoRINNs optimization problem for all benchmark problems considered.~The maximum and absolute $\lVert \hat{\mathbf{Q}}_i^{n+1}-\mathbf{Q}_i^{n+1}\rVert_1$ errors and the mean squared error MSE$(\hat{\mathbf{Q}}_i^{n+1},\mathbf{Q}_i^{n+1})$ are reported over all cells $i=1,\ldots,N$, time steps $n=1,\ldots,n_t$, and state variable dimensions $D$}
    \label{tb:Errors}
\end{table}
The numerical accuracy of GoRINNs is visualized in \cref{fig:GNN_BR_LWR}a,c.~\Cref{fig:GNN_BR_LWR}a displays the absolute errors between the numerical solution, $\hat{u}(t,x)$, of \cref{eq:unBR} with the learned by the GoRINNs flux functional $\mathcal{N}(u)$, and the ground truth one, $u(t,x)$, of the Burgers' equation in \cref{eq:Burg}; the latter is the numerical solution displayed in \cref{fig:solsBR_LWR}a (in Appendix).~As shown, the numerical accuracy provided by the GoRINNs is very high, even near the shock wave regions where the error is at most $6E-05$.~In addition, we demonstrate in \cref{fig:GNN_BR_LWR}c that the flux functional learned by the GoRINNs is in excellent agreement with that of the Burgers' equation $u^2/2$.
\begin{figure}[!h]
    \centering
    \subfigure[Burgers' equation: $\lvert \hat{u}(t,x)-u(t,x)\rvert$]{\includegraphics[width=0.45\textwidth]{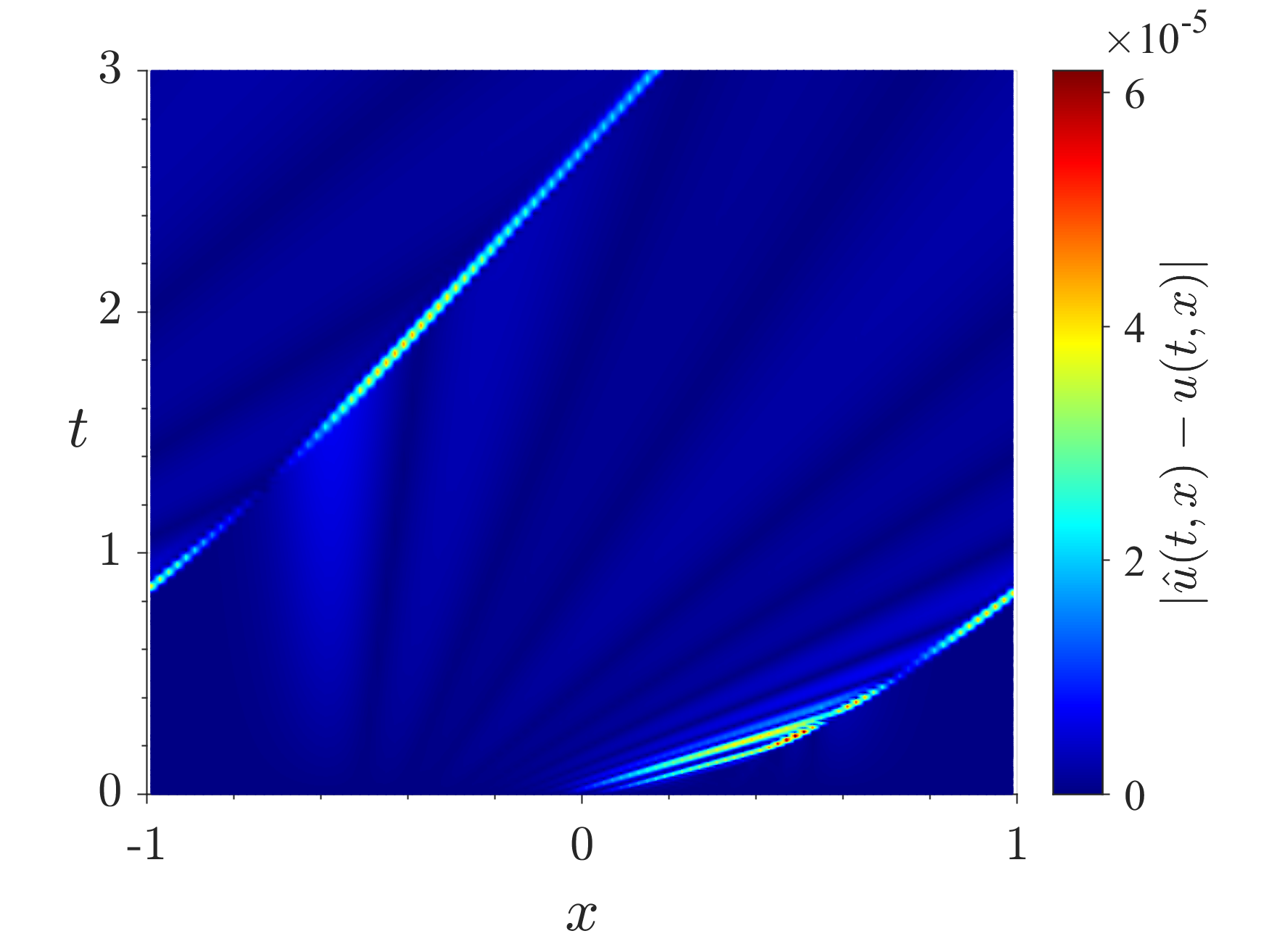}}
    \subfigure[LWR equation: $\lvert \hat{\rho}(t,x)-\rho(t,x)\rvert$]{\includegraphics[width=0.45\textwidth]{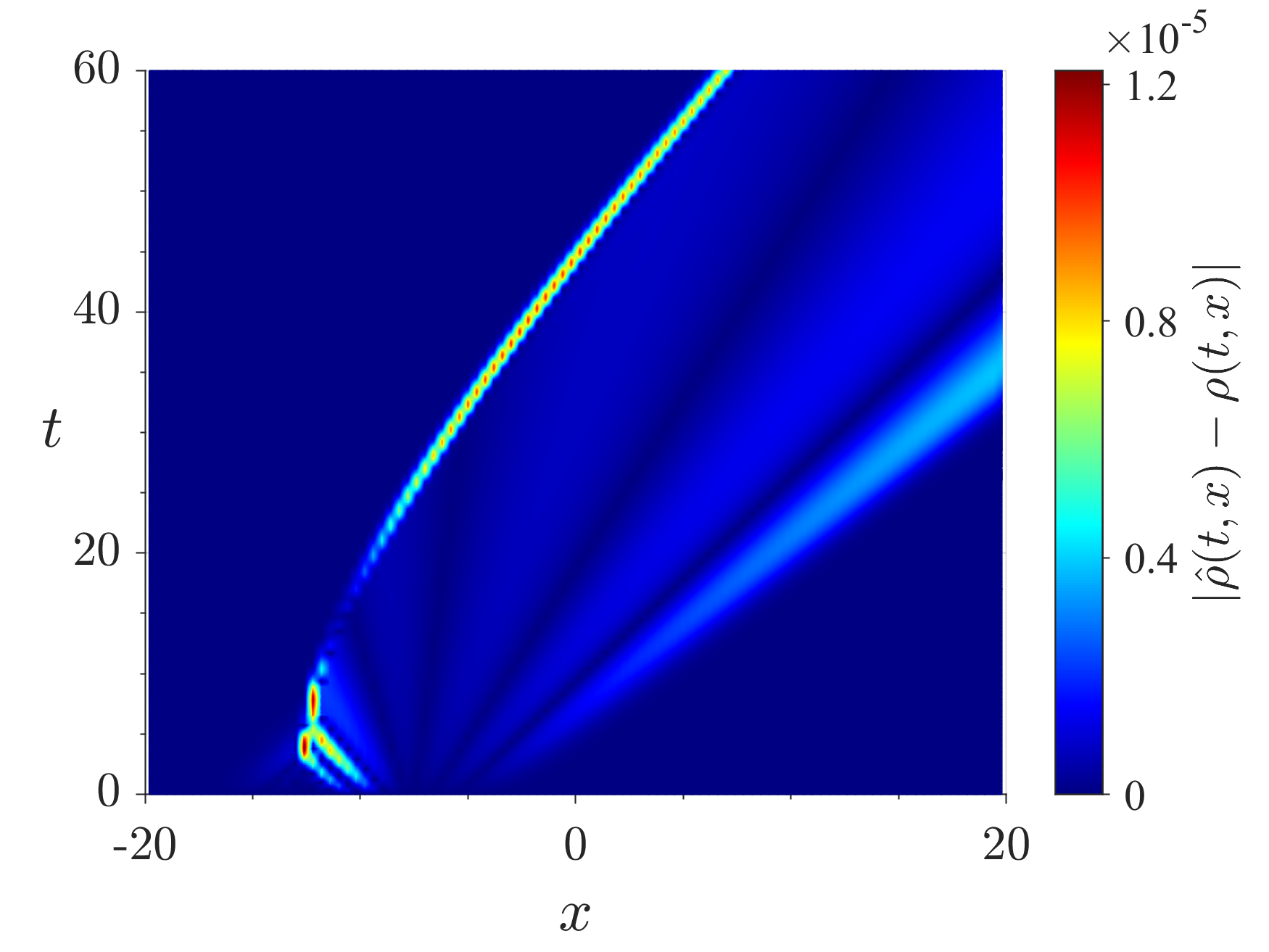}} \\
    \subfigure[Burgers' equation: $u^2/2$ vs GoRINNs $\mathcal{N}(u)$]{\includegraphics[width=0.45\textwidth]{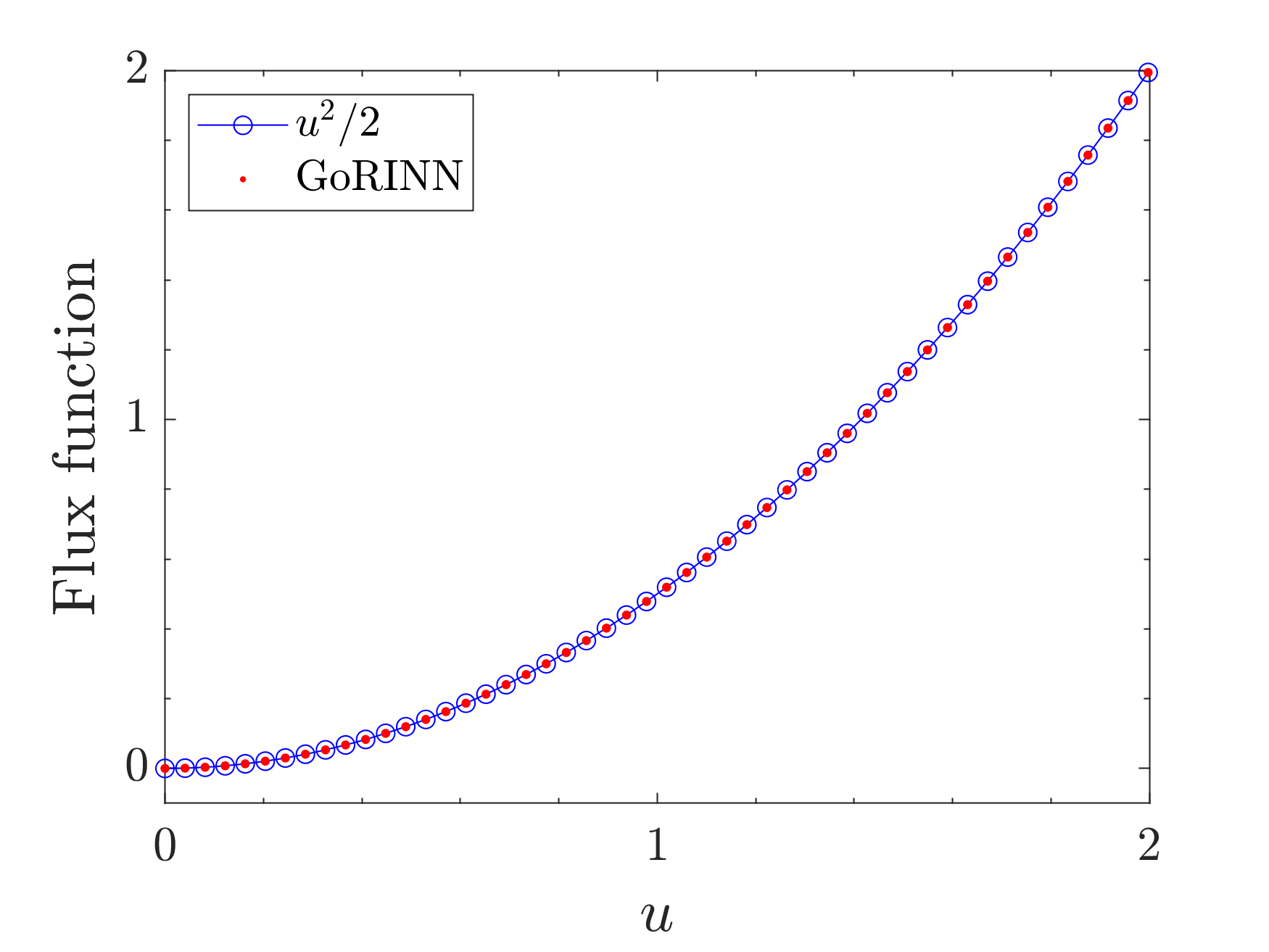}}
    \subfigure[LWR equation: $v(\rho)$ vs GoRINNs $\mathcal{N}(\rho)$]{\includegraphics[width=0.45\textwidth]{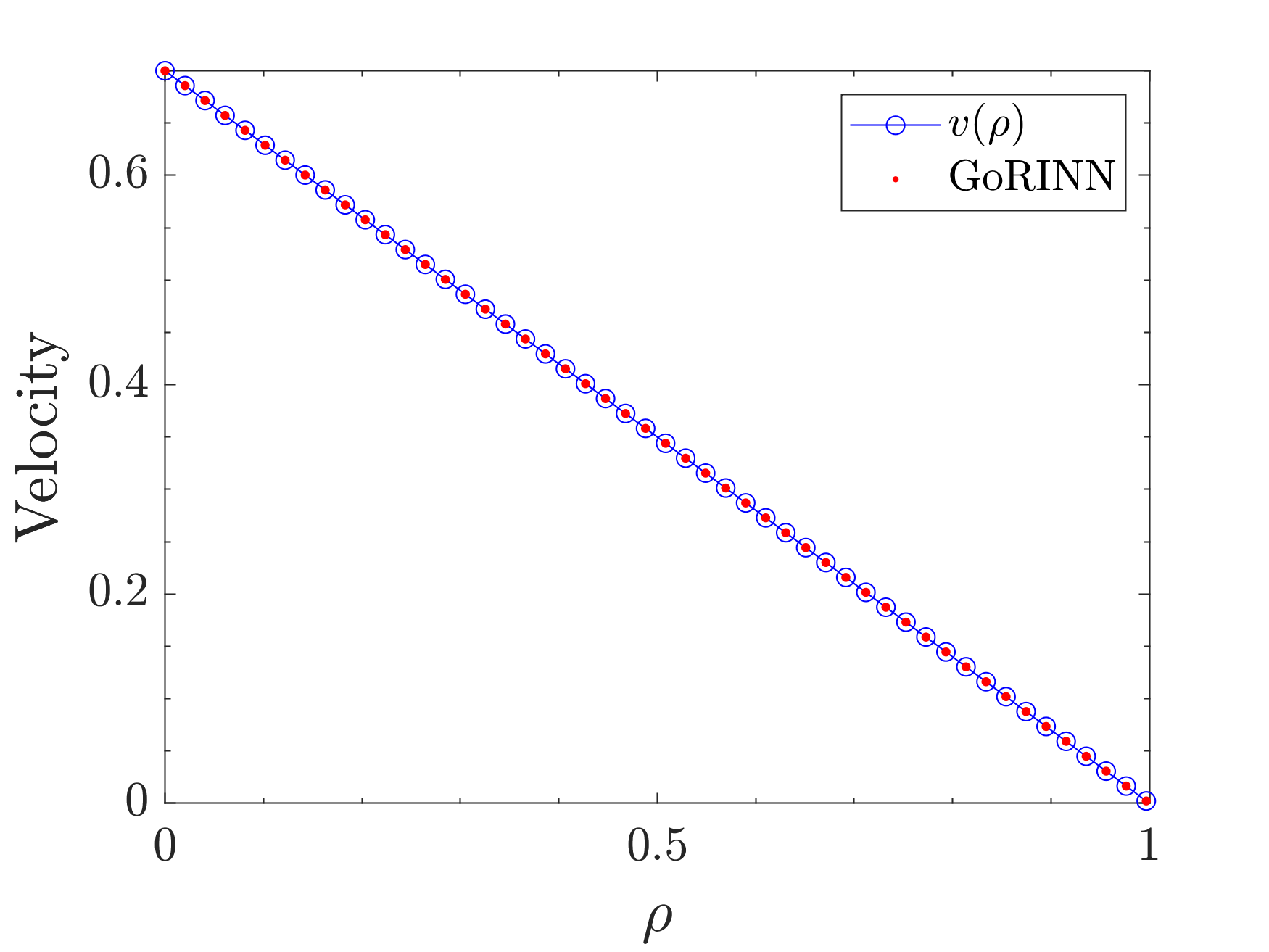}}
    \caption{Numerical accuracy of GoRINNs for the Burgers' and LWR equations.~(a,b) Absolute errors of the numerical solution provided by the GoRINNs learned equations in \cref{eq:unBR,eq:unLWR} ($\hat{u}$ and $\hat{\rho}$) vs the one provided by Burgers' and LWR equations in \cref{eq:Burg,eq:LWR} ($u$ and $\rho$).~(c,d) Analytically known flux function and velocity closure vs the $\mathcal{N}(u)$/$\mathcal{N}(\rho)$ functional learned by GoRINNs.}
    \label{fig:GNN_BR_LWR}
\end{figure}

\subsection{Learning the velocity closure of the LWR equation}
Here, we assume that partial information regarding the conservation law is available, but we miss the velocity closure.~In particular, given the set of data generated via the numerical solution of the LWR equation in \cref{sb:LWR_FP}, a subset of which is depicted in \cref{fig:solsBR_LWR}c, we assume that the originating equation is of the form:
\begin{equation}
    \partial_t \rho + \partial_x \left( \rho \mathcal{N}(\rho)\right) = 0, \label{eq:unLWR}
\end{equation}
in which we seek to approximate the unknown velocity closure $v(\rho)$ by the NN $\mathcal{N}(\rho)$.~Using the nomenclature in \cref{eq:unPDE}, the unknown function is $f_U(\rho,\mathcal{N}(\rho))=\rho \mathcal{N}(\rho)$ and $f_K(u)=s(u)=0$.~For deriving a Roe matrix of \cref{eq:unLWR} that satisfies the  conditions of \Cref{prop1}, \cref{eq:Roe_NN1} does not provide any information about the average state $\bar{\rho}$, since $f_K(u)=0$.~As in the case of \cref{eq:unBR}, here we again assume the arithmetic average $\bar{\rho}=(\rho_l+\rho_r)/2$ and require \cref{eq:Roe_NN2} to hold, which yields:
\begin{equation*}
    \left(  \mathcal{N}(\bar{\rho}) + \bar{\rho} \partial_{\rho} \mathcal{N}(\bar{\rho}) \right) (\rho_r-\rho_l) = \rho_r\mathcal{N}(\rho_r) - \rho_l\mathcal{N}(\rho_l),
\end{equation*}
for any right and left states $\rho_r,\rho_l$ provided by the data set.~Then, the loss function in \cref{eq:Opt2} rendering GoRINNs conservative, reduces to:
\begin{multline}
    \mathcal{L}_{RH}\left(\mathbf{Q}^n, \mathbf{P};\mathbf{H}_1 \right) = \sum_{i=1}^N \Big( \left( \mathcal{N}(\bar{\mathbf{Q}}_i^n,\mathbf{P};\mathbf{H}_1) + \bar{\mathbf{Q}}_i^n \cdot \partial_{\mathbf{u}}\mathcal{N}(\bar{\mathbf{Q}}_i^n,\mathbf{P};\mathbf{H}_1) \right) \cdot (\mathbf{Q}^n_i-\mathbf{Q}^n_{i-1}) - \\ \left(  \mathbf{Q}^n_i \cdot \mathcal{N}(\mathbf{Q}^n_i,\mathbf{P};\mathbf{H}_1) - \mathbf{Q}^n_{i-1} \cdot \mathcal{N}(\mathbf{Q}^n_{i-1},\mathbf{P};\mathbf{H}_1) \right) \Big), \label{eq:Opt2_LWR}
\end{multline}
which was used for solving the GoRINNs optimization problem in \cref{eq:OptGen}; similarly to the Burgers' equation, $L=5$ neurons in the hidden layer and a logistic sigmoid activation function are considered for approximating $\mathcal{N}(\rho)$.

~The training results are reported in \cref{tb:Errors}, again indicating that the GoRINNs optimization scheme converges.~In particular, the maximum $\lVert \hat{\mathbf{Q}}_i^{n+1}-\mathbf{Q}_i^{n+1}\rVert_1$ error \textit{over all} $N=100$ cells and $n_t=360$/$360$/$1680$ (training/validation/testing) time steps is around $5E-07$.

For the assessment of the numerical accuracy of the GoRINNs, we compare in \cref{fig:GNN_BR_LWR}b the numerical solution, $\hat{\rho}(t,x)$, of the unknown \cref{eq:unLWR} with the learned by the GoRINNs flux functional $\rho\mathcal{N}(\rho)$ and the ground true one, $\rho(t,x)$, of the LWR equation as shown in \cref{fig:solsBR_LWR}c.~The differences are minor, with the absolute error being very low all over the time-space domain, even near the shock wave regions (the maximum there is $1.2E-05$).~We further demonstrate in \cref{fig:GNN_BR_LWR}d that the velocity closure approximated by the GoRINNs is almost identical with the linear one of the LWR equation, $v(\rho)=v_{max}(1-\rho)$.

\subsection{Learning the pressure closure of the SW equations}

For this problem, we consider the set of $D=2$-dim. data constructed on the basis of the numerical solution of the SW equations in \cref{sb:SW_FP}, which consists of observations for $h$ and $q$ along the 1-dim. spatial domain $x\in[-5,5]$; a subset of the training set is shown in \cref{fig:solsSW_PW}a,b.~We now assume that the conservation of mass is known, while the one of momentum is missing the pressure closure $P(h)$, which we seek to approximate with a NN $\mathcal{N}(h,q)$.~We highlight here that the independence of the pressure by the momentum $q$ is not a-priori known.~Then, the unknown system of conservation laws is written in the form of \cref{eq:unPDE} as:
\begin{equation}
    \partial_t \begin{bmatrix}
        h \\ q
    \end{bmatrix} + \partial_x  \begin{bmatrix}
        q \\ \dfrac{q^2}{h} + \mathcal{N}(h,q)
    \end{bmatrix}  = \begin{bmatrix}
        0 \\ 0
    \end{bmatrix},
    \label{eq:unSW}
\end{equation}
where the known flux term is $\mathbf{f}_K([h,q]^\top)=[q,q^2/h]^\top$ and the unknown one is simply composed by the NN as $\mathbf{f}_U([h,q]^\top)=[0,\mathcal{N}(h,q)]^\top$.~To derive a Roe matrix for \cref{eq:unSW} that satisfies the conditions of \cref{prop1}, we first employ \cref{eq:Roe_NN1} to get information for the average state $[\bar{h},\bar{q}]^\top$.~This results to a $D=2$-dim. system of equations, where the non-trivial one reads:
\begin{equation}
    -\dfrac{\bar{q}^2}{\bar{h}^2} (h_r-h_l) + \dfrac{2\bar{q}}{\bar{h}} (q_r-q_l) = \dfrac{q_r^2}{h_r}-\dfrac{q_l^2}{h_l} \Rightarrow \bar{q}=\bar{h} \dfrac{q_l h_l^{-1/2} + q_r h_r^{-1/2}}{h_l^{1/2}+h_r^{1/2}}. \label{eq:SWutRoe1}
\end{equation}
The above expression of $\bar{q}$ agrees with the one of the SW equations in \cref{eq:SW_Roe1}.~Now, given $\bar{q}$, we use the simple arithmetic average $\bar{h}=(h_r+h_l)/2$ and require \cref{eq:Roe_NN2} to hold, which yields:
\begin{equation*}
    \partial_h \mathcal{N}(\bar{h},\bar{q}) (h_r-h_l) + \partial_q \mathcal{N}(\bar{h},\bar{q}) (q_r-q_l) = \mathcal{N}(h_r,q_r) - \mathcal{N}(h_l,q_l),
\end{equation*}
for any right and left states $(h_r,q_r)$, $(h_l,q_l)$ provided by the data set.~Then, the loss function in \cref{eq:Opt2} that renders GoRINNs conservative, reduces to:
\begin{multline}
    \mathcal{L}_{RH}\left(\mathbf{Q}^n, \mathbf{P};\mathbf{H}_1 \right) = \sum_{i=1}^N \Big(  \partial_{\mathbf{u}}\mathcal{N}(\bar{\mathbf{Q}}_i^n,\mathbf{P};\mathbf{H}_1) \cdot (\mathbf{Q}^n_i-\mathbf{Q}^n_{i-1}) - \left(  \mathcal{N}(\mathbf{Q}^n_i,\mathbf{P};\mathbf{H}_1) -\mathcal{N}(\mathbf{Q}^n_{i-1},\mathbf{P};\mathbf{H}_1) \right) \Big), \label{eq:Opt2_SW}
\end{multline} 
where, in this benchmark problem, $\bar{\mathbf{Q}}_i^n$ requires only the computation of $\bar{h}$ for every cell, since $\bar{q}$ is given by \cref{eq:SWutRoe1}.~We thus used the loss function in \cref{eq:Opt2_SW} to solve the GoRINNs optimization problem in \cref{eq:OptGen}.~The approximation of the physical flux function $\mathcal{N}(h,q)$ shares the same architecture as in previous benchmark problems.~The resulting training results are reported in \cref{tb:Errors}, where the small training/validation/testing errors indicate the convergence of the GoRINNs optimization scheme.~In fact, the maximum $\lVert \hat{\mathbf{Q}}_i^{n+1}-\mathbf{Q}_i^{n+1}\rVert_1$ error \textit{over all} $N=200$ cells and $n_t=180$/$180$/$840$ (training/validation/testing) time steps for both state variables $D=1,2$ is around $1E-06$.

The numerical accuracy of GoRINNs is visualized in \cref{fig:GNN_SW}.~\Cref{fig:GNN_SW}a,b display the absolute errors, for each variable of the system,  between the numerical solution of \cref{eq:unSW} with the learned by the GoRINNs flux functional $\mathcal{N}(h,q)$ and the ground truth one, as computed from the SW equations in \cref{eq:SWcon}; the latter is the numerical solution displayed in \cref{fig:solsSW_PW}a,b.~Again, the GoRINNs provide very high numerical accuracy for both $h$ and $q$ variables, even near the shock wave and rarefaction regions where the error is at most $1.5E-05$.~In addition, the excellent agreement of the analytically known pressure closure, $P(h)$, and the GoRINNs learned functional, $\mathcal{N}(h,q)$, is displayed in \cref{fig:GNN_SW}c.~In fact, GoRINNs are capable to learn the independence of the pressure closure to $q$; $\mathcal{N}(h,q)$ does not change along that $q$-axis.
\begin{figure}[!h]
    \centering
    \subfigure[$\lvert \hat{h}(t,x)-h(t,x)\rvert$]{\includegraphics[width=0.32\textwidth]{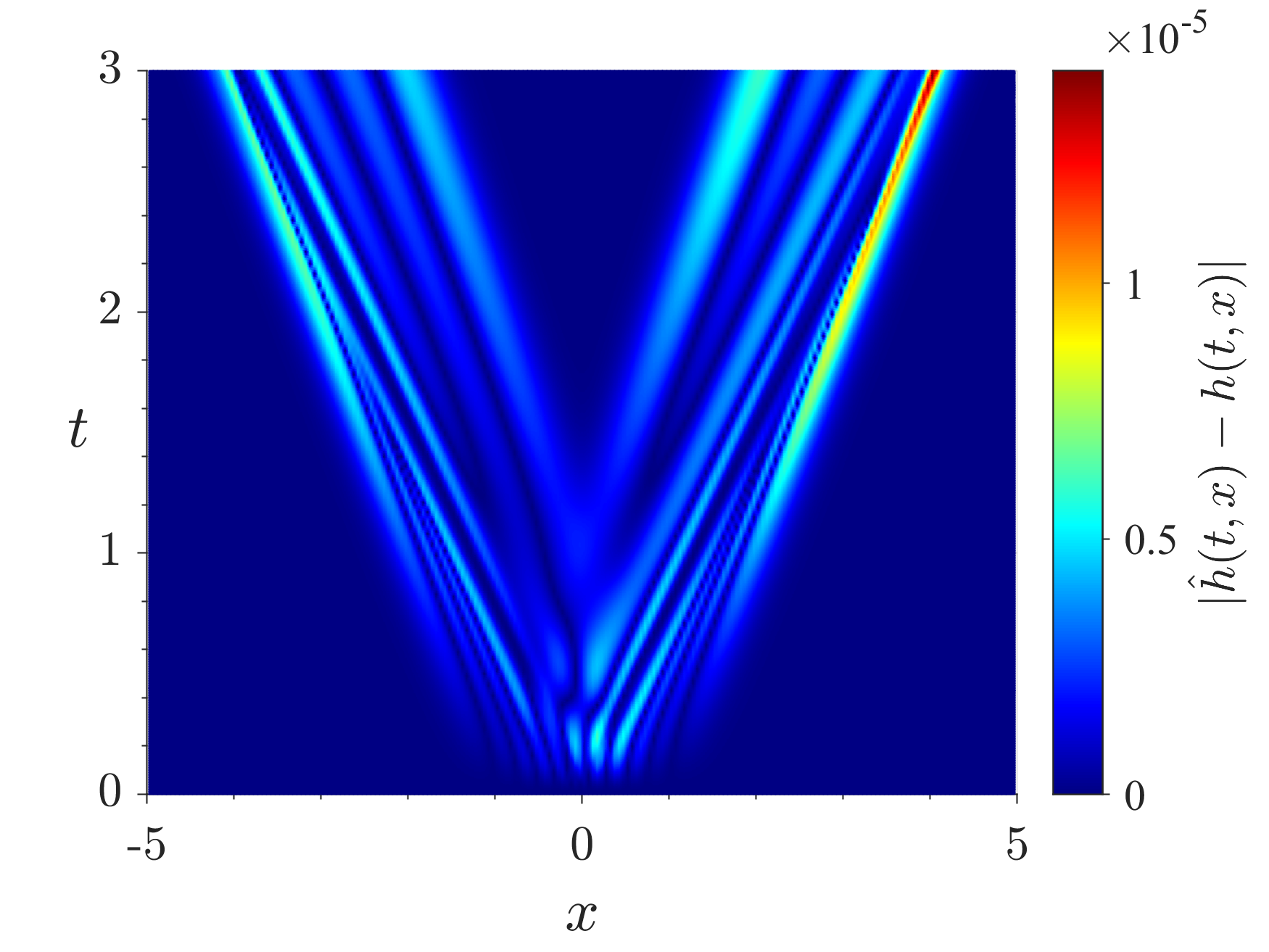}}
    \subfigure[$\lvert \hat{q}(t,x)-q(t,x)\rvert$]{\includegraphics[width=0.32\textwidth]{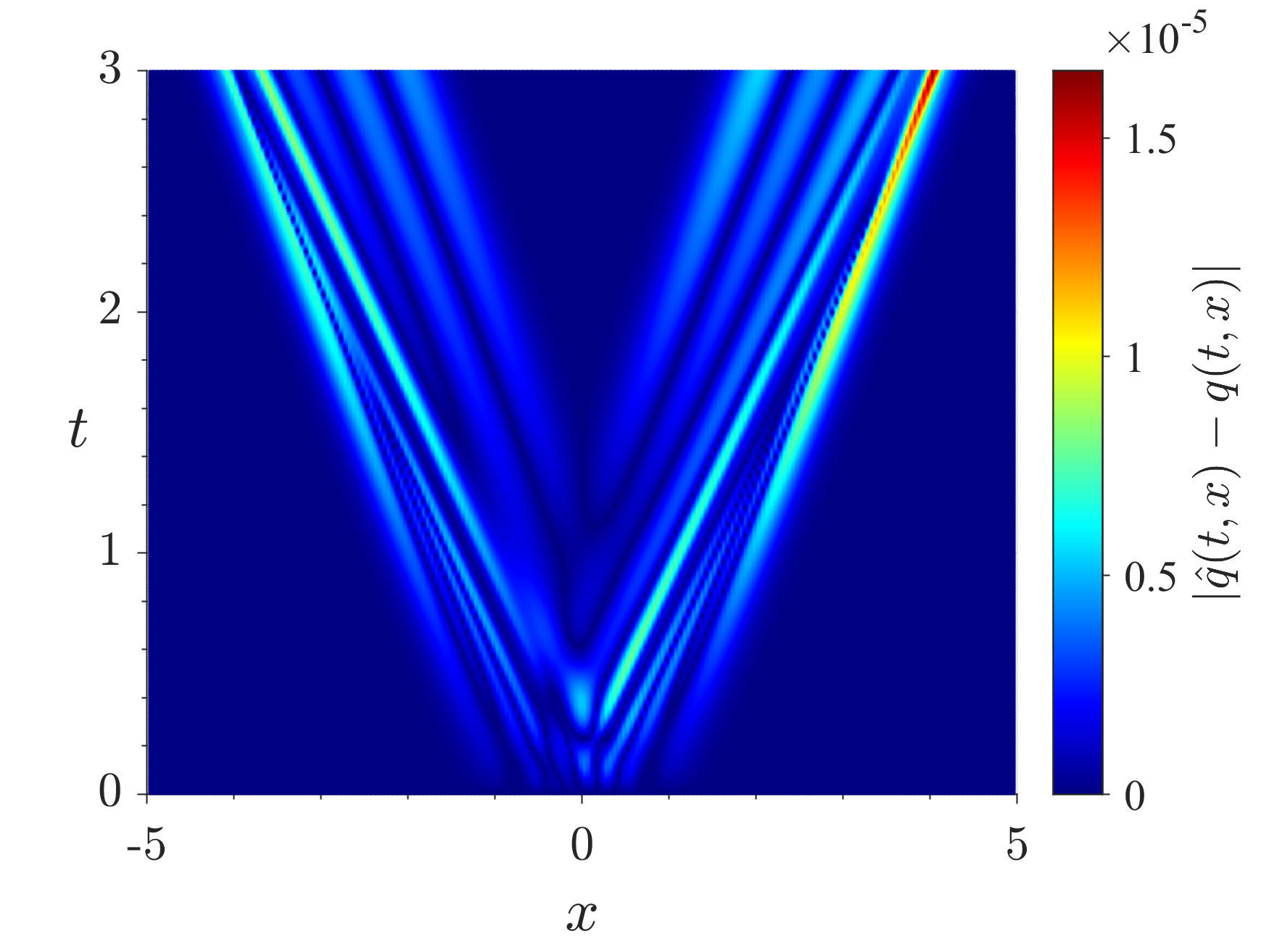}}
    \subfigure[$P(h)$ vs GoRINNs $\mathcal{N}(h,q)$]{\includegraphics[width=0.32\textwidth]{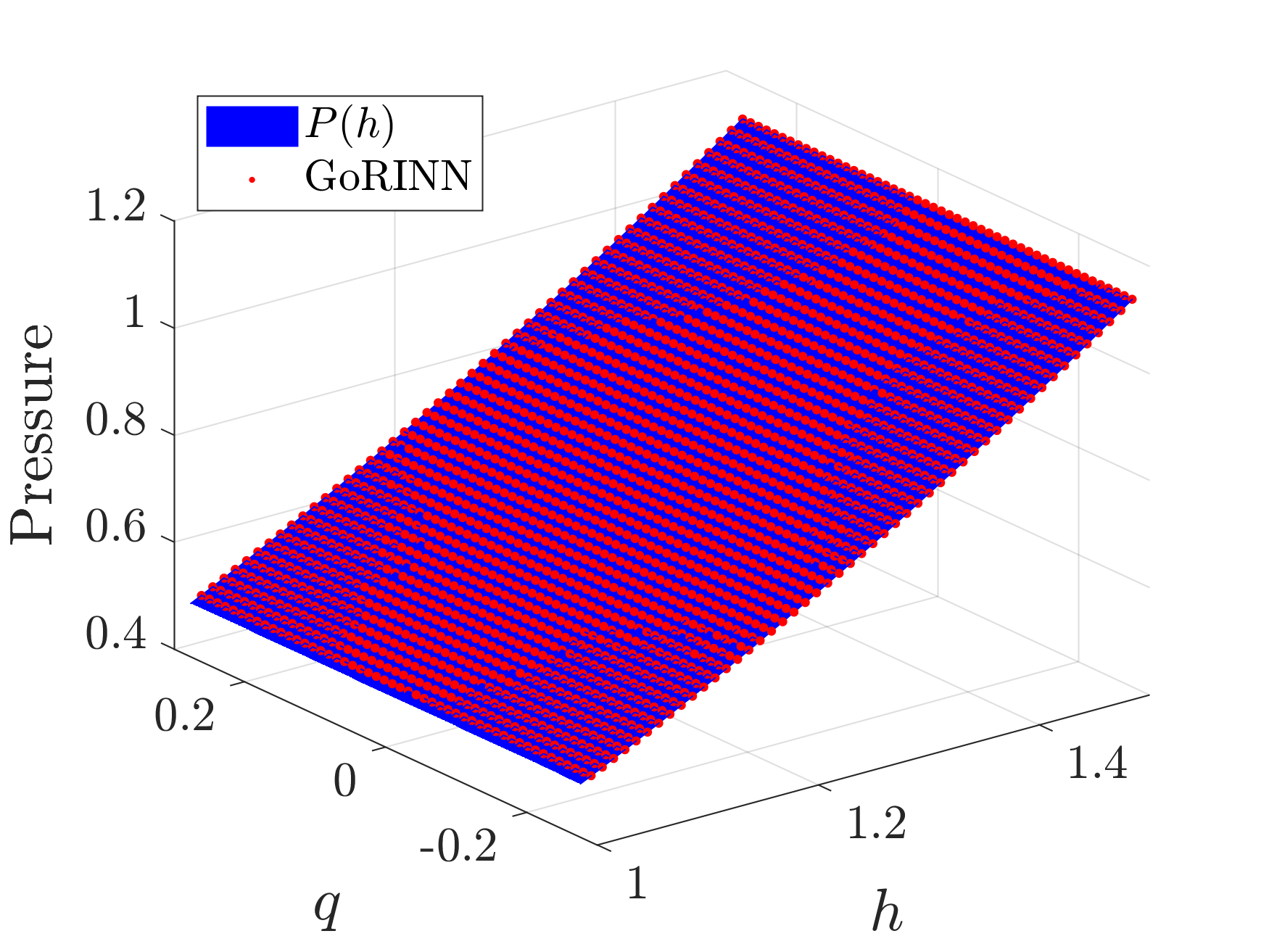}}
    \caption{Numerical accuracy of GoRINNs for the SW equations.~(a,b) Absolute errors of the numerical solution provided by the GoRINNs learned equations in \cref{eq:unSW} ($\hat{h}$ and $\hat{q}$) vs the one provided by SW equations in \cref{eq:SWcon} ($h$ and $q$).~(c) Analytically known pressure closure $P(h)$ vs the $\mathcal{N}(h,q)$ functional learned with GoRINNs.}
    \label{fig:GNN_SW}
\end{figure}

\subsection{Learning the pressure closure of the PW equations}
\label{sb:PW_IP}
In this problem, the data set that was constructed via the solution of the forward problem of the PW equations in \cref{sb:PW_FP}, consists of a set of $D=2$-dim. observations for the density $\rho$ and momentum $q$ along the 1-dim. spatial domain $x\in[0,800]$; a subset of these data is shown in \cref{fig:solsSW_PW}d,e.~As in the previous problem, the mass conservation is known, while the momentum conservation is lacking the pressure closure, which we seek to learn, without any a-priori knowledge on its variable dependencies, thus assuming that it can be approximated by a NN $\mathcal{N}(\rho,q)$.~However, in this problem, we have information about the source term that renders the momentum equation inhomogeneous.~With the above assumptions, the unknown system of conservation laws is written in the form of \cref{eq:unPDE} as:
\begin{equation}
    \partial_t \begin{bmatrix}
        \rho \\ q
    \end{bmatrix} + \partial_x  \begin{bmatrix}
        q \\ \dfrac{q^2}{\rho} + \mathcal{N}(\rho,q)
    \end{bmatrix}  = \begin{bmatrix}
        0 \\ \dfrac{\rho V_e(\rho)-q}{\tau}
    \end{bmatrix},
    \label{eq:unPW}
\end{equation}
where the known flux term is $\mathbf{f}_K([\rho,q]^\top)=[q,q^2/\rho]^\top$, the unknown one is $\mathbf{f}_U([\rho,q]^\top)=[0,\mathcal{N}(\rho,q)]^\top$ and the source term the same of the PW equations in \cref{eq:PWcon}.~Observe that the homogeneous system in \cref{eq:unPW} is exactly the same as the one in \cref{eq:unSW}.~Thus, similarly to \cref{eq:SWutRoe1}, for the Roe matrix we obtain by \cref{eq:Roe_NN1} an average state: 
\begin{equation}
    -\dfrac{\bar{q}^2}{\bar{\rho}^2} (\rho_r-\rho_l) + \dfrac{2\bar{q}}{\bar{\rho}} (q_r-q_l) = \dfrac{q_r^2}{\rho_r}-\dfrac{q_l^2}{\rho_l} \Rightarrow \bar{q}=\bar{\rho} \dfrac{q_l \rho_l^{-1/2} + q_r \rho_r^{-1/2}}{\rho_l^{1/2}+\rho_r^{1/2}}, \label{eq:PWutRoe1}
\end{equation}
which also agrees with the one of the known PW equations in \cref{eq:PW_Roe1}.~According to \Cref{prop1}, to render GoRINNs conservative, we again assume the arithmetic average $\bar{\rho}=(\rho_r+\rho_l)/2$ and require \cref{eq:Roe_NN2} to hold, which implies the same simplified loss function as the one in \cref{eq:Opt2_SW}.~We then use this loss function to solve the GoRINNs optimization problem in \cref{eq:OptGen}.

The training results are reported in \cref{tb:Errors}; see row denoted as ``PW, $P(\rho,q)$''.~It is clearly shown that the GoRINNs optimization problem does not converge in as low values as in the other problems.~In fact, the maximum $\lVert \hat{\mathbf{Q}}_i^{n+1}-\mathbf{Q}_i^{n+1}\rVert_1$ error \textit{over all} $N=100$ cells and $n_t=360$/$360$/$4080$ (training/validation/testing) time steps for both state variables $D=1,2$, is not negligible; at the order of $\mathcal{O}(10^{-3})$.~In fact, the pressure closure of the PW equations $P(\rho)$ is poorly approximated by the GoRINNs with $\mathcal{N}(\rho,q)$; see \cref{fig:GNN_PW2}d.~As a result, the numerical solution of \cref{eq:unPW} is not accurate, as shown in \cref{fig:GNN_PW2}a,b,c, especially near the shock wave where significant numerical errors accumulate.

This discrepancy occurs because the Roe linearized matrix $\hat{\mathbf{A}}_{i-1/2}(\mathbf{q}_l,\mathbf{q}_r)$ for the PW equations in \cref{eq:PW_Roe1} does not result from the Jacobian $\partial_\mathbf{u} \mathbf{f}(\bar{\mathbf{q}})$ computed at the average state $\bar{\mathbf{q}}=[\bar{\rho},\bar{q}]^\top$; see discussion in \Cref{app:RoePayne}.~In all other benchmark problems, the Roe linearized matrix $\hat{\mathbf{A}}_{i-1/2}(\mathbf{q}_l,\mathbf{q}_r)$ has this property and, thus, GoRINNs succeed to solve the inverse problem with a high accuracy.

\paragraph{When GoRINNs do not converge} Clearly, when no information is available about the hyperbolic system from which the data come from, such an issue can only be captured by the incapability of GoRINNs to converge.~The latter provides a hint on wrong predictor selection.~We thus attempt to find desired pressure closure in \cref{eq:unPW} as a function of only $\rho$ and not $q$; i.e., $\mathcal{N}(\rho)$.

By considering $\mathcal{N}(\rho)$ in \cref{eq:unPW}, GoRINNs are conservative when \cref{eq:Roe_NN1,eq:Roe_NN2} in \Cref{prop1} are satisfied.~Since no change is employed to the known functions, the average state $\bar{q}$ is again provided by \cref{eq:PWutRoe1}.~However, requiring \cref{eq:Roe_NN2} to hold for the new $\mathbf{f}_U([\rho,q]^\top)=[0,\mathcal{N}(\rho)]^\top$, implies:
\begin{equation}
    \partial_\rho \mathcal{N}(\bar{\rho}) (\rho_r-\rho_l) = \mathcal{N}(\rho_r) - \mathcal{N}(\rho_l) \Rightarrow \partial_\rho \mathcal{N}(\bar{\rho}) = \dfrac{\mathcal{N}(\rho_r) - \mathcal{N}(\rho_l)}{\rho_r-\rho_l}.
    \label{eq:PWxx}
\end{equation}
The latter equality is similar to the one of the PW equation in \cref{eq:PW_Roe1} and, in fact, allows us to substitute the above expression of $\partial_\rho \mathcal{N}(\bar{\rho})$ into the computation of the Roe linearized matrix of GoRINNs.

~As a result, the RH condition is explicitly satisfied, and thus the GoRINNs optimization problem in \cref{eq:OptGen} can be solved without including the loss function term $\mathcal{L}_{RH}$ in \cref{eq:Opt2}, and without using the arithmetic average $\bar{\rho}=(\rho_r+\rho_l)/2$.~The training results, derived by solving this simplified GoRINNs optimization problem, are reported in \cref{tb:Errors}; see row denoted as ``PW, $P(\rho)$''.~It is therein clearly shown that considering a density-dependent pressure closure makes the GoRINNs optimization problem converge to very small training/validation/testing errors.~In fact, all errors are of magnitude, as low as the errors for the other benchmark problems.

The numerical accuracy of GoRINNs is visualized in \cref{fig:GNN_PW1}.
\begin{figure}[!h]
    \centering
    \subfigure[$\lvert \hat{\rho}(t,x)-\rho(t,x)\rvert$]{\includegraphics[width=0.32\textwidth]{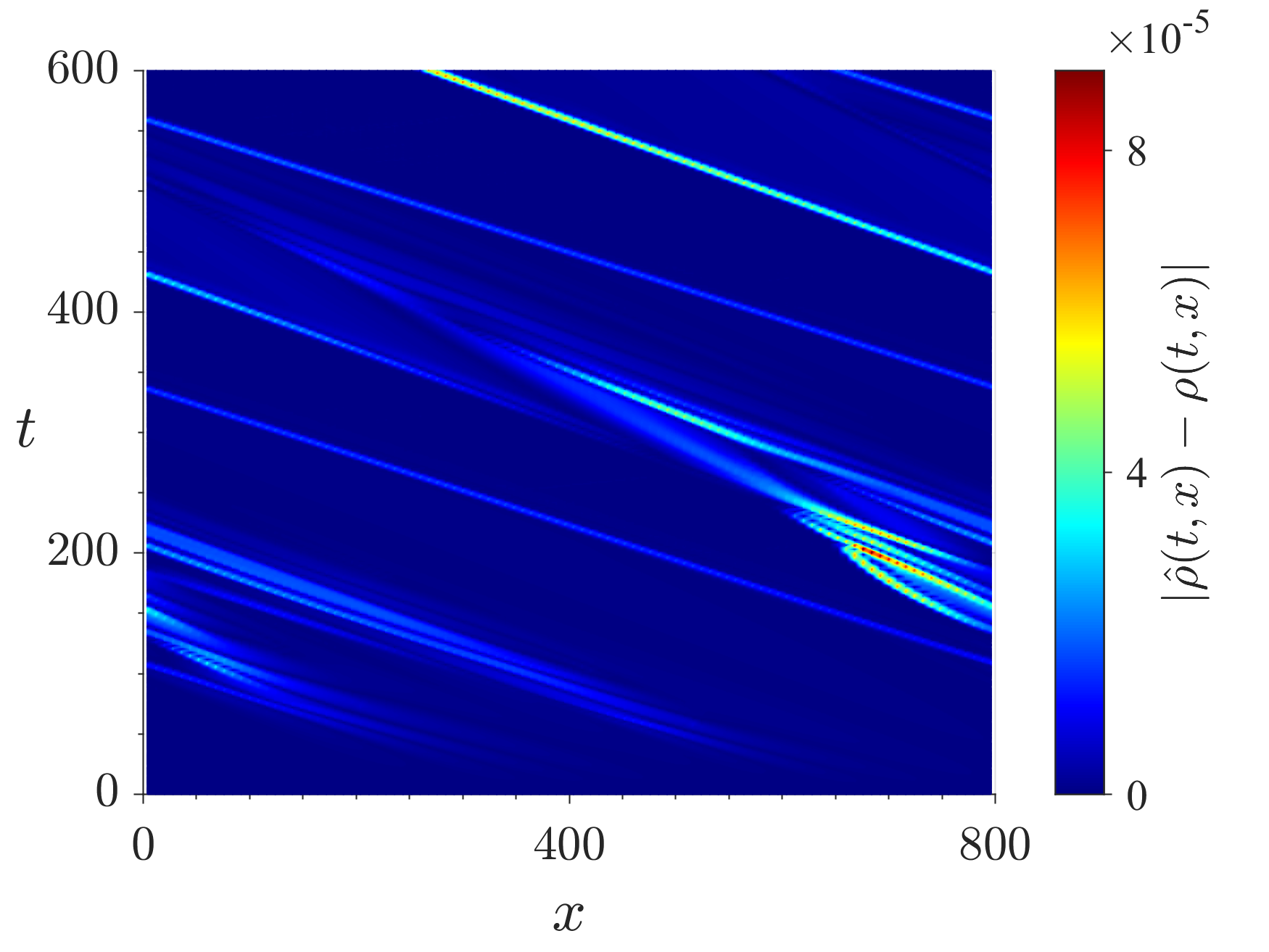}}
    \subfigure[$\lvert \hat{q}(t,x)-q(t,x)\rvert$]{\includegraphics[width=0.32\textwidth]{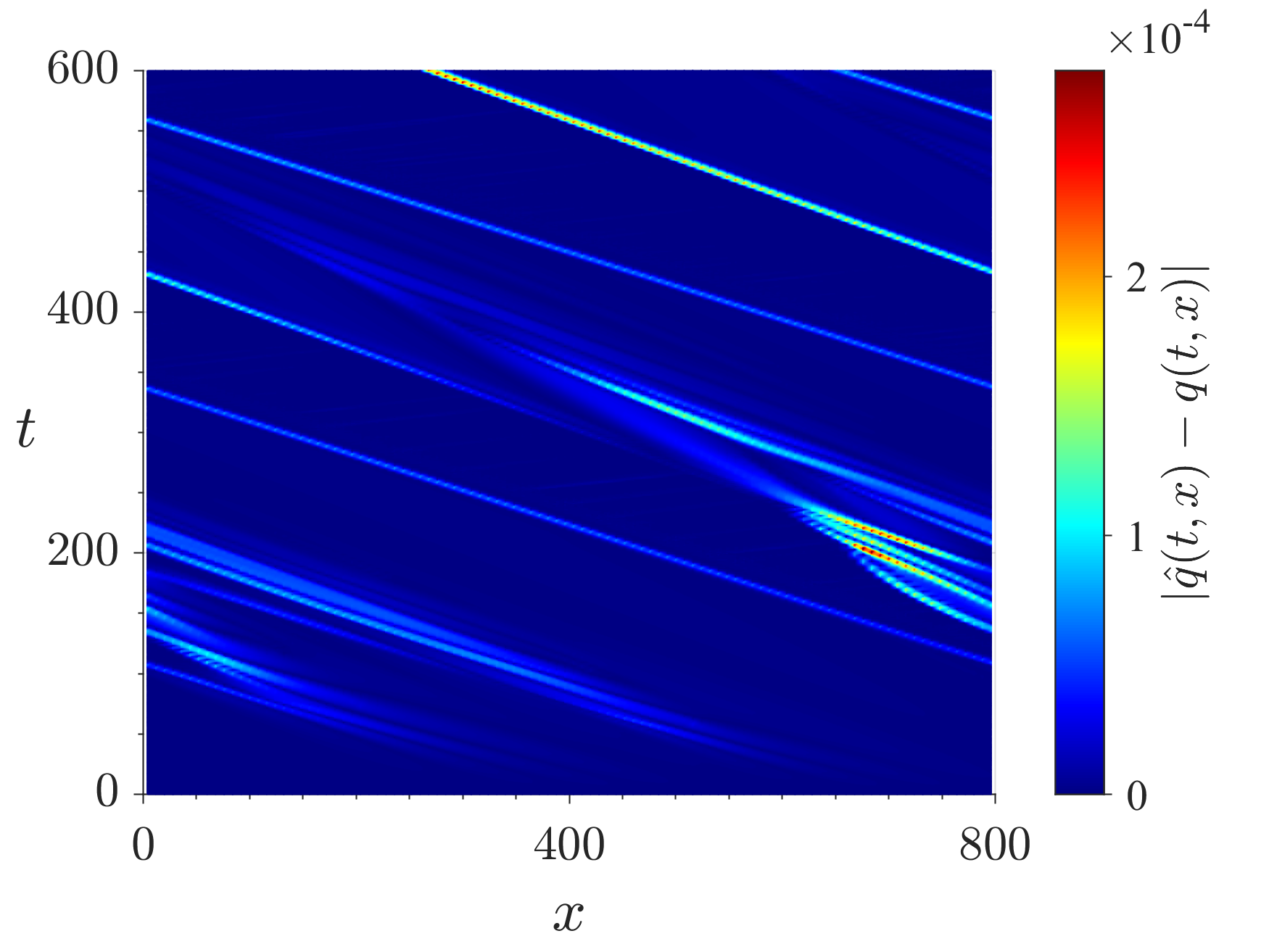}}
    \subfigure[$P(\rho)$ vs GoRINNs $\mathcal{N}(\rho)$]{\includegraphics[width=0.32\textwidth]{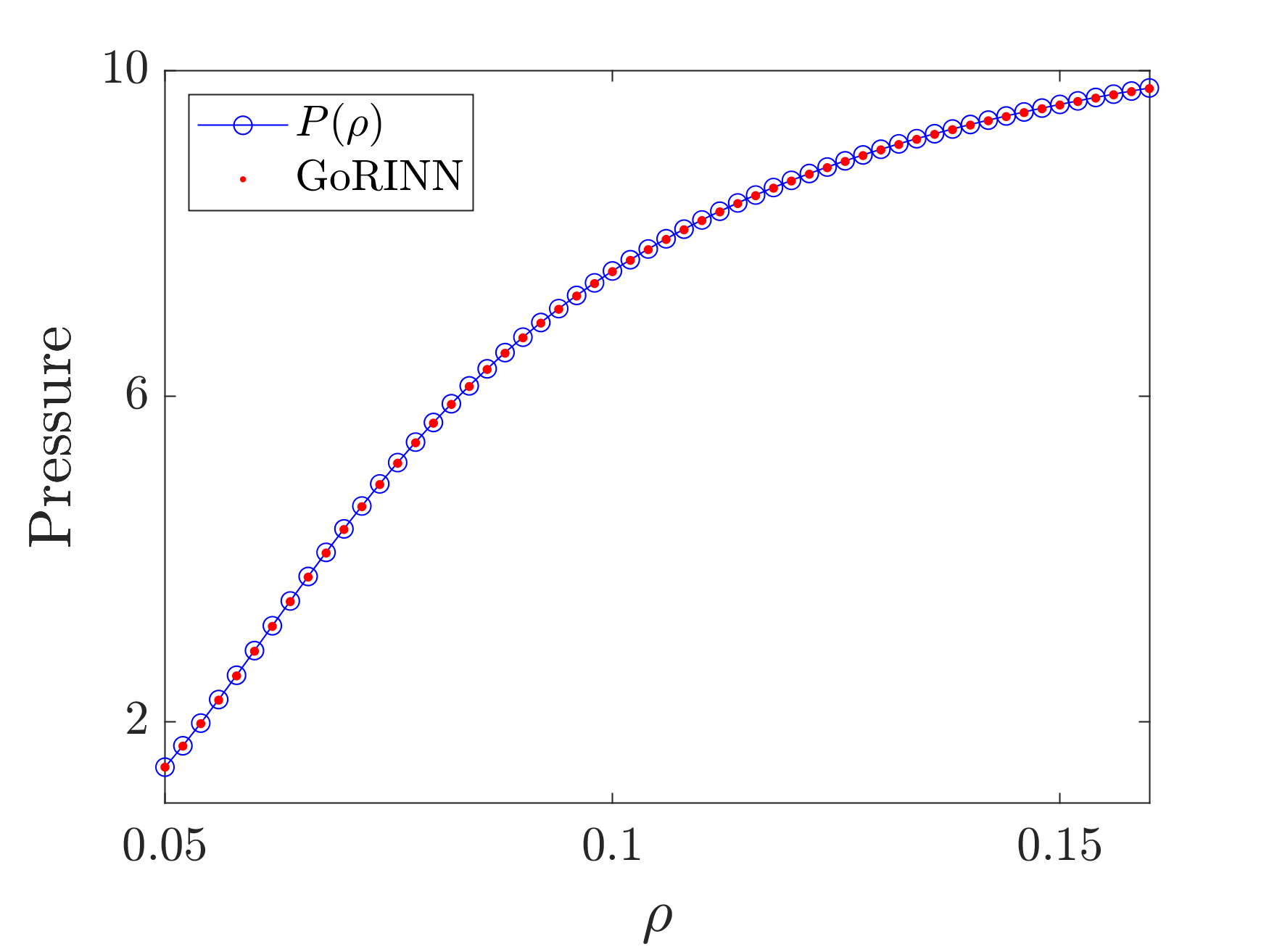}}
    \caption{Numerical accuracy of GoRINNs, assuming a pressure closure of the form $\mathcal{N}(\rho)$, for the PW equations.~(a,b) Absolute errors of the numerical solution provided by the GoRINNs learned equations in \cref{eq:unPW} for $\mathcal{N}(\rho)$ ($\hat{\rho}$ and $\hat{q}$) vs the one provided by PW equations in \cref{eq:PWcon} ($\rho$ and $q$).~(c) Analytically known pressure closure $P(\rho)$ in \cref{eq:PWpres} vs the $\mathcal{N}(\rho)$ functional learned with GoRINNs.}
    \label{fig:GNN_PW1}
\end{figure}
\Cref{fig:GNN_PW1}a,b display the absolute errors between the numerical solution of \cref{eq:unPW} with the learned by the GoRINNs density-dependent flux functional $\mathcal{N}(\rho)$ and the one computed from the PW equations in \cref{eq:PWcon}; the latter numerical solution is displayed in \cref{fig:solsSW_PW}d,e.~Very high numerical accuracy is provided by the GoRINNs  for both $\rho$ and $q$ variables, even near the shock wave and rarefaction regions.~In these regions, the error was 2-3 orders higher when considering GoRINNs for a density $\mathcal{N}(\rho,q)$; see \cref{fig:GNN_PW2}a,b.~Finally, the excellent agreement of the analytically known non-linear pressure closure in \cref{eq:PWpres}, $P(\rho)$, and the GoRINNs learned functional, $\mathcal{N}(\rho)$ is shown in \cref{fig:GNN_PW1}c.

\section{Conclusions}
\label{sec:Con}
Hyperbolic partial differential equations (PDEs) that model conservation laws play a central role in explaining complex phenomena across diverse disciplines such as fluid mechanics, aerospace and aerodynamics engineering, acoustics, astrophysics, electromagnetics, traffic flow, and crowd dynamics, to name just a few. The numerical solution of the forward problem for such PDEs is challenging due to the presence of discontinuities, such as shock waves and rarefactions, which can cause classical numerical methods to fail or yield inaccurate solutions. The inverse problem —especially that of learning the physical closures/potentials — poses even greater difficulties because of the inherent nonlinearities, ill-posedness, and emerging discontinuities. Thus, relatively small approximation errors can lead to instabilities and violation of the conservation laws. \par 
In the last few years, SciML based on DNNs, such as PINNs \cite{pang2019fpinns,yang2020physics,meng2020composite,mao2020physics,patel2022thermodynamically,jagtap2020conservative} and neural operators  \citep{lu2021learning,li2020fourier,wen2022u,kovachki2023neural,thodi2024fourier}, have shown great potential in dealing with the challenging inverse problem of parameter inference
in conservation laws. One critical issue is the lack of explainability \cite{peyvan2024riemannonets,doncevic2024recursively,fabiani2024randonet} and the computational complexity pertaining to their training, thus results to rather moderate approximation accuracy (see also the discussion and some demonstrations in \cite{fabiani2024randonet}). By construction, most of the until now proposed schemes, set the conservation law as a soft constraint in the loss function, thus failing to respect accurately the conservation property. By explicitly preserving the conservation laws, hybrid SciML methods infer fluxes of conservative FV schemes \cite{morand2024deep,chen2024learning,kim2024approximating}, which however provide representation of the numerical-approximated quantities rather than the physical ones per se.~These methods may introduce significant inaccuracies for coarsely discretized data and are developed for scalar conservation laws. 

GoRINNs offer explainable ``intelligently'' designed shallow feedforward NNs for tackling the above problems. Unlike other hybrid SciML methods that approximate the numerical fluxes using DNNs, GoRINNs solve the inverse problem enhancing explainability, and accuracy, thus respecting explicitly the underlying conservation law as approximate Riemann solvers that satisfy the Rankine-Hugoniot condition. This approach accommodates systems of conservation laws and allows for the inclusion of source terms—whether known or unknown—in the associated hyperbolic PDEs. As we show, GoRINNs are able to approximate with high accuracy the closures of four benchmark problems, which exhibit traveling shocks that emerge at finite times. 
We show that GoRINNs result to a high numerical approximation accuracy, preserving numerical stability and the conservation laws. 

The training and performance of GoRINNs, can also incorporate partial knowledge of the physics underlying the missing terms (see \cref{sb:PW_IP}, where GoRINNs converge with the ``correct'' predictor selection).~If no physical insight is available, then techniques such as symbolic regression \cite{udrescu2020ai,billard2002symbolic}, manifold learning (e.g., parsimonious diffusion maps) \cite{coifman2006diffusion,dsilva2018parsimonious,della2024learning,galaris2022numerical,papapicco2022neural}, and SINDy \cite{brunton2016discovering,champion2019data} can be exploited to learn a parsimonious set of variables that can parametrize the unknown closures.~Furthermore, GoRINNs can be extended to analyze real-world data derived from microscopic simulations/observations, facilitating the discovery of macroscopic hyperbolic PDEs for the emergent dynamics.~This application will further require parameter inference, a task that can be addressed by including physical parameters as additional GoRINNs predictors.~Finally, using GoRINNs to discover macroscopic closures connects directly to control theory, which can be exploited for enabling distributed control in large-scale microscopic models (see e.g., \citep{maffettone2022continuification,maffettone2024mixed}), given GoRINNs-derived information at the macroscopic level.

Importantly, what we propose and highlight here is the need of developing new SciML resource-bounded algorithms that can provide high accuracy at a low computational cost. That is develop new ``intelligently'' designed  schemes which are not only physics-informed but also numerical analysis-informed in order to deal with the time and space complexity for training DNNs. Towards this aim, the recently introduced blended inverse-PDE networks (BiPDE-Nets) \cite{pakravan2021solving}, first discover unknown terms of PDEs and in a latter stage carry out numerical operations similar to those in traditional solvers. 
In \cite{mistani2023jax}, the authors show that ``accurate'' optimization of shallow NNs can yield superior results compared to training DNNs for PDEs. Recently, RandONets \cite{fabiani2024randonet} have been introduced to learn linear and nonlinear operators based on random projections, shallow NNs, and iterative linear algebra methods for (large-scale) ill-posed problems. There, it has been shown that RandONets outperform DNNs by several orders of magnitude both in terms of computational cost and accuracy, reaching also machine-precision accuracy for linear operators. 
Our hybrid/blended scheme presented here, contributes exactly towards this important direction of research in SciML, focusing on the solution of the inverse problem for hyperbolic conservation laws with important applications across many fields.

\bibliographystyle{apalike}
\bibliography{references}  

\clearpage
\newpage
\section*{APPENDIX}
\appendix
\renewcommand{\theequation}{A.\arabic{equation}}
\renewcommand{\thefigure}{A.\arabic{figure}}
\setcounter{equation}{0}
\setcounter{figure}{0}
\section{Benchmark problems and data acquisition}
\label{app:Bench}

Here, we present the four benchmark problems upon which the performance of GoRINNs is evaluated on, namely the inviscid Burgers' equation, the LWR equation of traffic flow, the SW equations of fluid dynamics, and the PW equations of traffic flow.~We additionally provide the details for the solution of the forward problem with the high-resolution Godunov-type FV scheme in \cref{sb:FP_god}, which are required for the collection of training/validation/testing data sets.

\subsection{Burger's equation}
\label{sb:B_FP}
The inviscid Burgers' equation is one of the simplest problems of non-linear scalar conservation laws.~Considering a homogeneous PDE with the non-linear flux term $f(u)=u^2/2$, the Burgers' equation is written in the form of \cref{eq:genPDE} as:
\begin{equation}
    \partial_t u + \partial_x (u^2/2) = 0. \label{eq:Burg}
\end{equation}
Due to the simplicity of the Burgers' equation, the Riemann problem can be solved exactly \citep{leveque2002finite,ketcheson2020riemann}.~Here, we solve the Burgers' equation numerically in the 1-dim. spatial domain $x\in[-1,1]$ with periodic boundary conditions, along the time interval $t\in[0,3]$.~In particular, we discretize the spatial domain in $N=100$ cells and consider a time step $dt=0.005$ that satisfies the CFL condition.~As already discussed in \cref{sb:imp}, for generating the numerical solutions, 4 different initial conditions were considered, all of which are following Gaussian profiles, with $u(0,x)=\mu ~exp(-x^2/(2 \sigma^2))$ for a constant $\sigma=0.2$ and a uniformly varying $\mu\in[1,2]$ with $d\mu=1/3$.

In spite of the smooth initial conditions, contact discontinuities are formed within the selected time interval, in particular shock waves and rarefactions; the latter are not transonic due to the positive initial conditions.~Thus, for the numerical solution of the forward problem via the Godunov-scheme in \cref{sb:FP_god}, it is sufficient to employ the Roe solver, which leads to the scalar, in this case, Roe matrix $\hat{a}_{i-1/2} = \bar{u}$, where $\bar{u}=(u_l+u_r)/2$; the derivation is provided in \cref{app:RoeBurg}.~As already discussed in \cref{sb:imp}, the Van-Leer flux-limiter function is employed.~The resulting numerical solution of the Burgers' equation for the Gaussian initial data with $\mu=2$ and $\sigma=0.2$ is depicted in \Cref{fig:solsBR_LWR}a,b, where it is clearly shown that the initial bump quickly forms a right going shock wave, followed by a rarefaction spread out towards the left; for comparison to the analytic solution, see Chapter 4 of \cite{ketcheson2020riemann}.~From the 4 numerical solutions of the Burgers' equation, we randomly split the 15-15-70\% of the time-series data to form the training/validation/testing data sets of the GoRINNs, as discussed in \cref{sb:imp}; the training samples collected from the solution with initial data with $\mu=2$ and $\sigma=0.2$ are denoted with black circles in \Cref{fig:solsBR_LWR}a.
\begin{figure}[!h]
    \centering
    \subfigure[Burgers' equation, $u(t,x)$]{\includegraphics[width=0.45\textwidth]{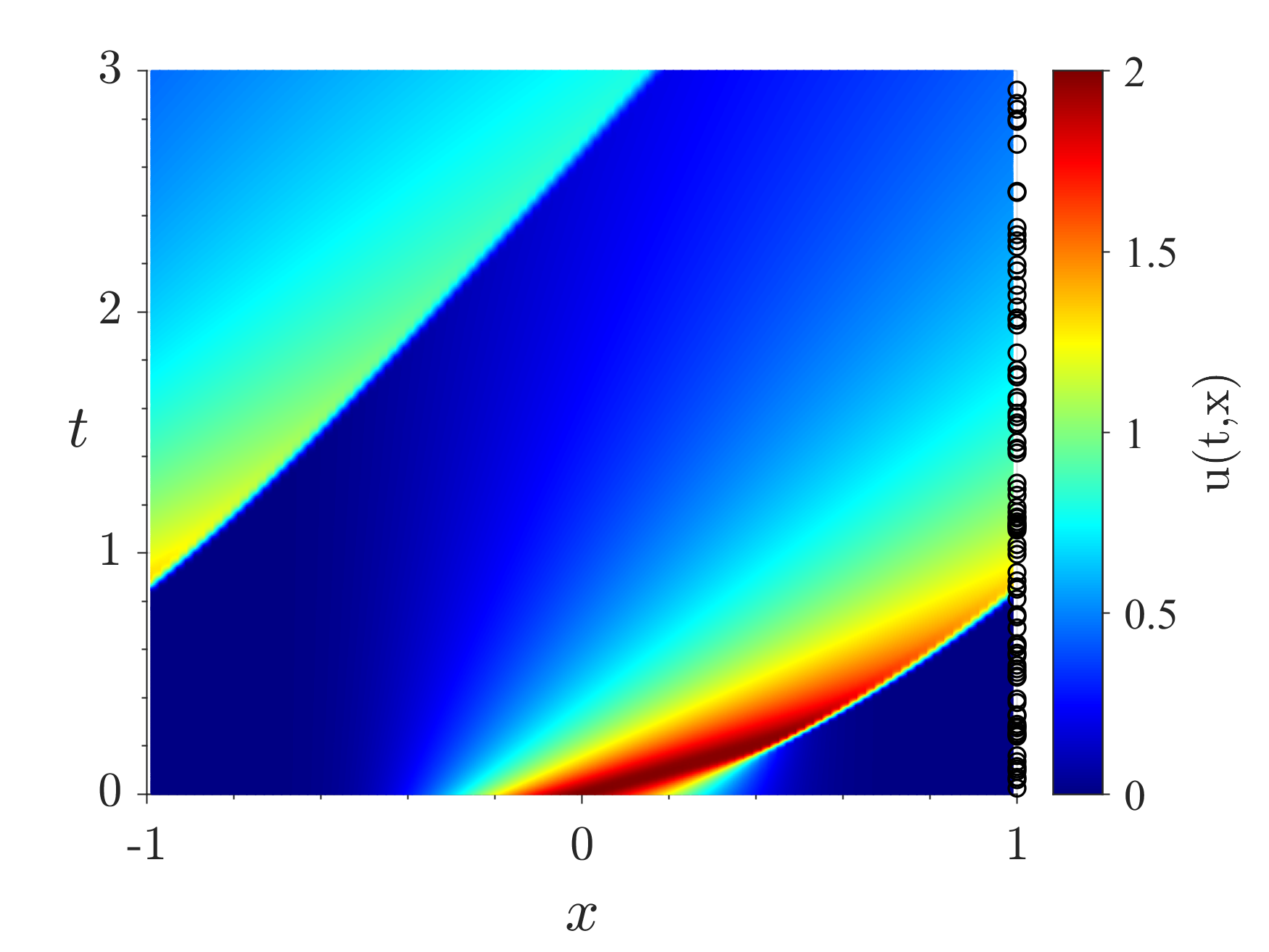}}
    \subfigure[Burgers' equation, $u(0,x)$ and $u(3,x)$]{\includegraphics[width=0.45\textwidth]{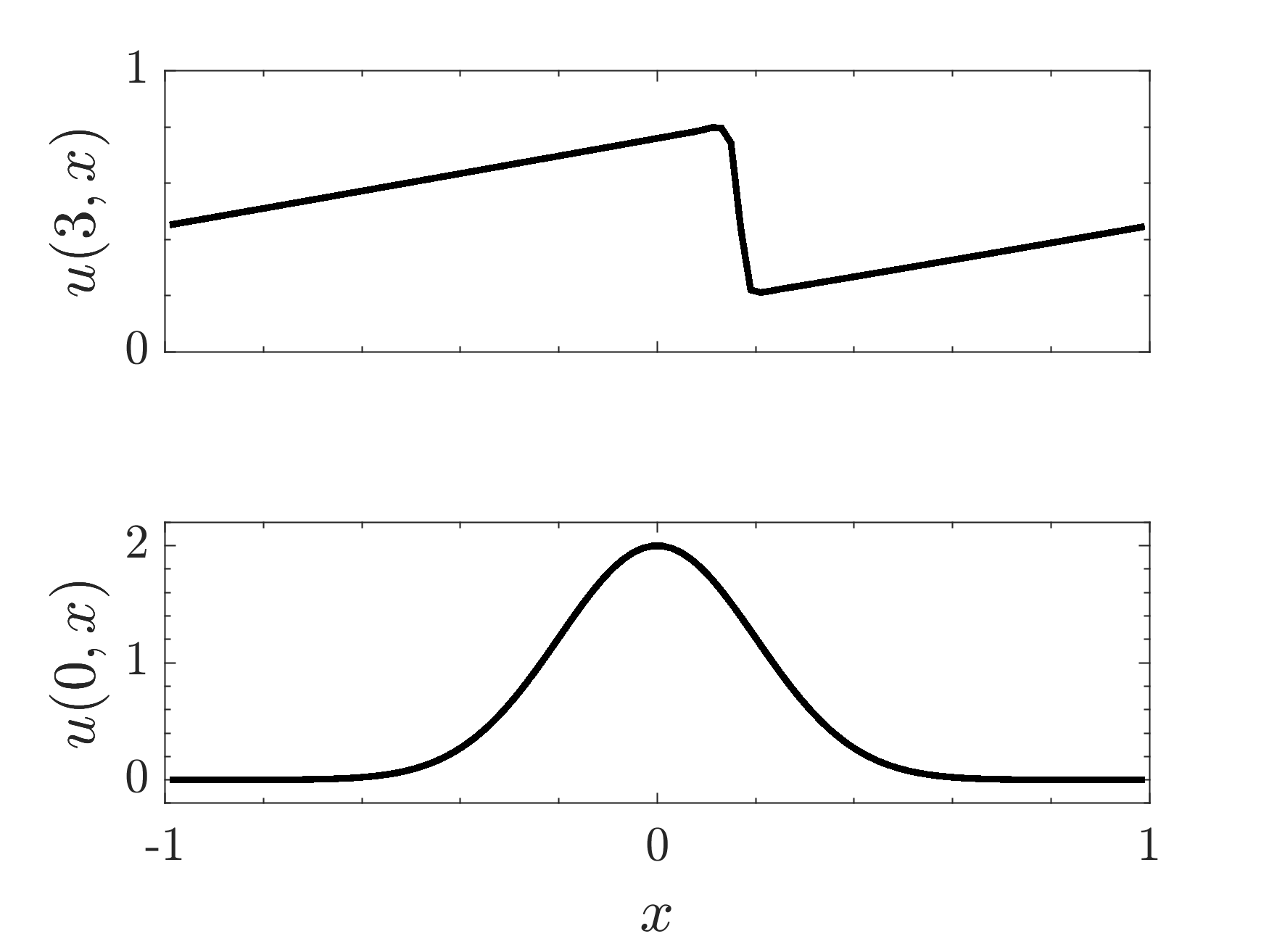}} \\
    \subfigure[LWR equation, $\rho(t,x)$]{\includegraphics[width=0.45\textwidth]{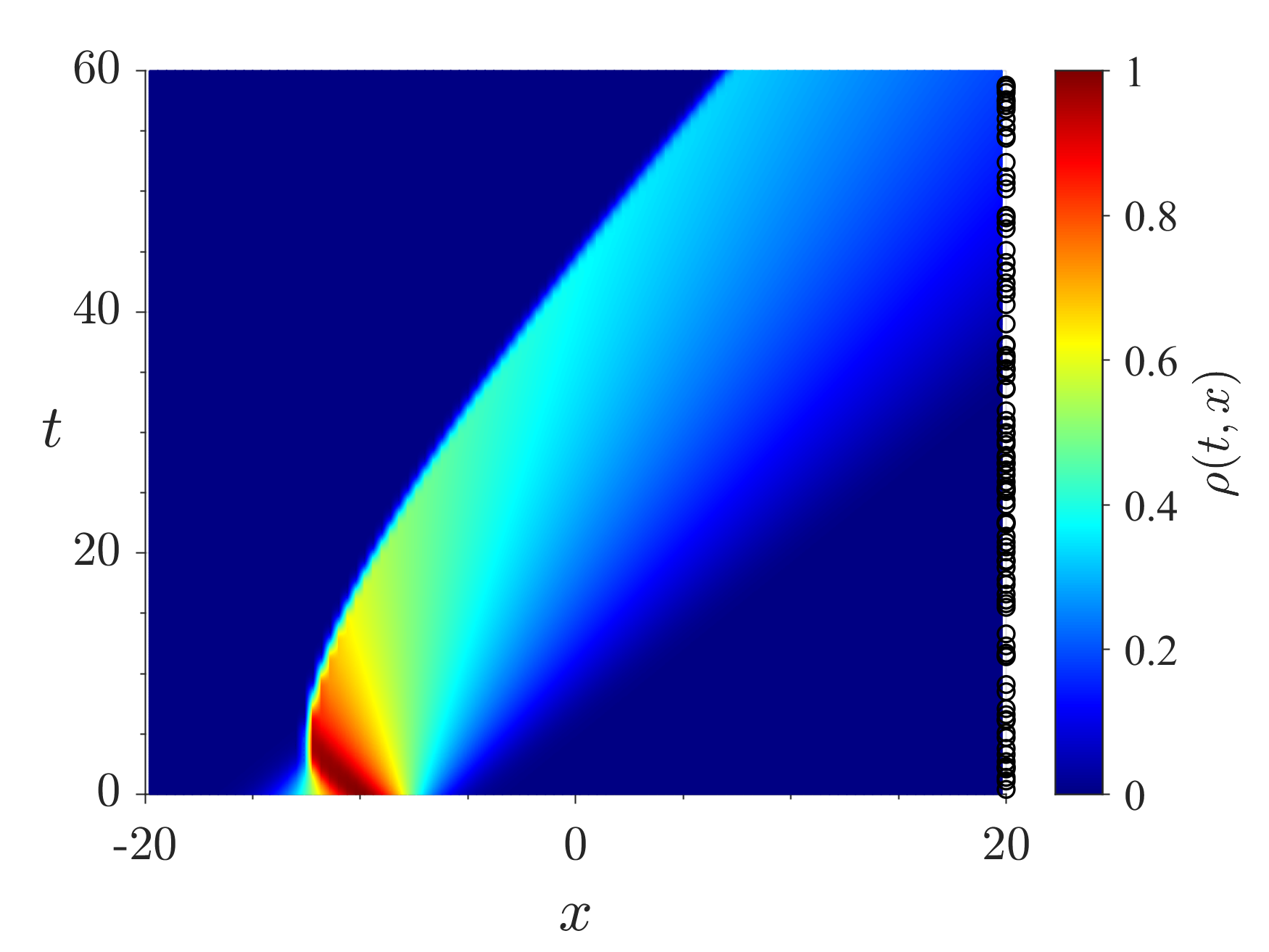}}
    \subfigure[LWR equation, $\rho(0,x)$ and $\rho(60,x)$]{\includegraphics[width=0.45\textwidth]{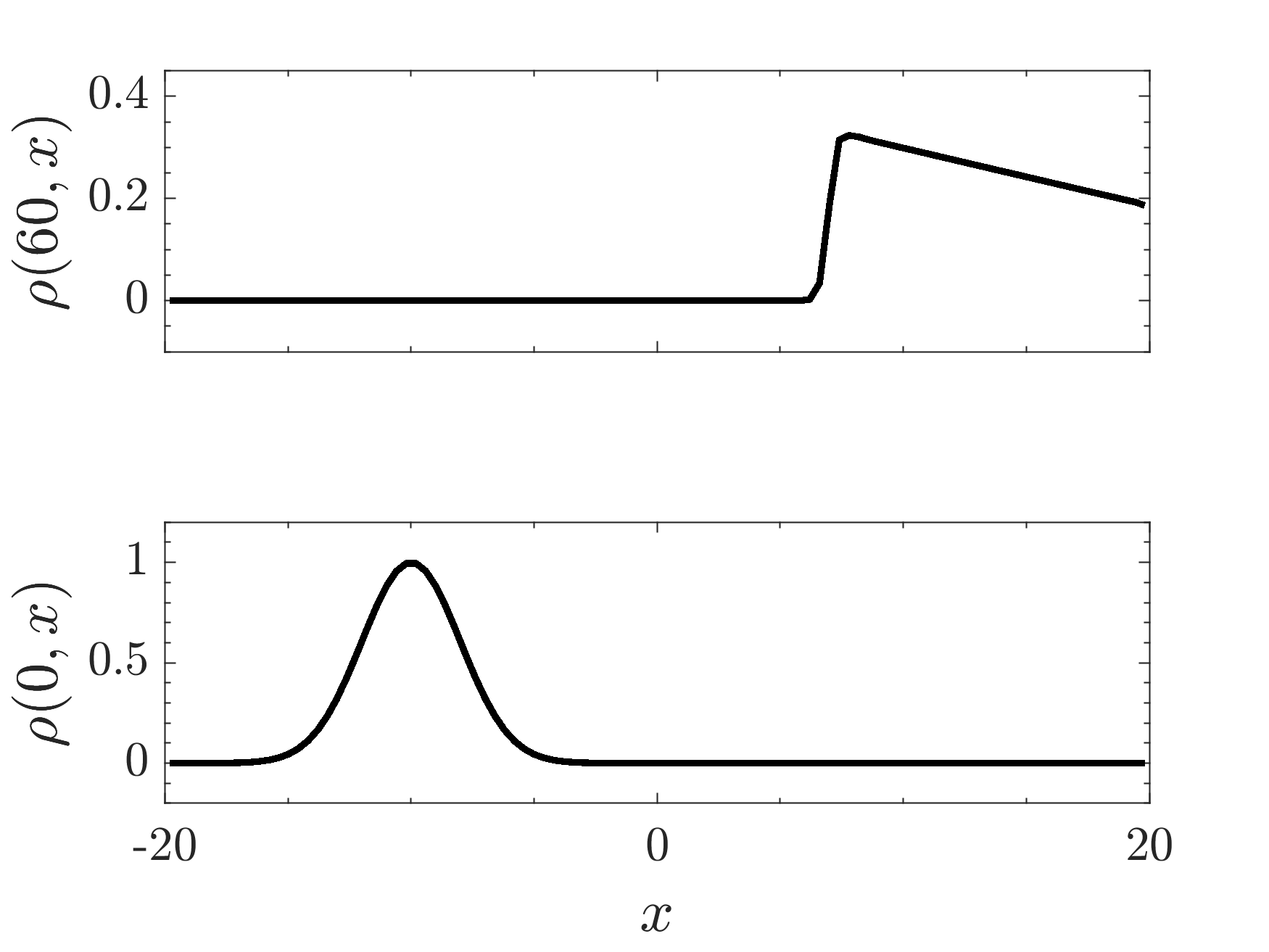}} 
    \caption{Numerical solution of the Burgers' and LWR equations obtained with the high-resolution Godunov-scheme in \cref{sb:FP_god} for (a,b) $x\in[-1,1]$, $t\in[0,3]$, with the initial data $u(0,x)=2 ~exp(-x^2/(2\cdot0.2^2))$ shown in panel (b), and (c,d) $x\in[-20,20]$, $t\in[0,60]$ with the $\rho(0,x)=exp(-(x+10)^2/(2\cdot2^2))$ shown in panel (d).~Periodic boundary conditions are used for the Burgers' equation and outflow ones for the LWR equation.~The black circles on (a,c) denote the time steps $n_t$, collected from the depicted numerical solutions, to form the training set of GoRINNs.}
    \label{fig:solsBR_LWR}
\end{figure}

\subsection{LWR equation of traffic flow}
\label{sb:LWR_FP}
The LWR equation \cite{lighthill1955kinematic,richards1956shock} is a non-linear scalar conservation law describing the evolution of the macroscopic density $\rho(t,x)$ of cars in a road.~The traffic flow is modelled by the LWR equation in the form of \cref{eq:genPDE} as:
\begin{equation}
    \partial_t \rho + \partial_x  \left(\rho v(\rho)\right) = 0 \label{eq:LWR},
\end{equation}
where the car density $\rho(t,x) \in [0,1]$ is moving with a density-dependent velocity:
\begin{equation}
    v(\rho) = v_{max} (1-\rho),
\end{equation}
where $v_{max}>0$ is the maximum velocity, implying that $v(\rho) \in [0,v_{max}]$.

Similarly to the Burgers' equation, the Riemann problem for the LWR equation can be solved exactly \citep{leveque2002finite,ketcheson2020riemann}.~Here, we solve the LWR equation in \cref{eq:LWR} numerically in the 1-dim. spatial domain $x\in[-20,20]$ with outflow boundary conditions (allowing cars to exit the domain), along the time interval $t\in[0,60]$.~In particular, we set $v_{max}=0.7$ and consider  $N=100$ cells and a time step $dt=0.1$ that satisfies the CFL condition.~As discussed in \cref{sb:imp}, for generating the numerical solutions, 4 different initial conditions were considered, again following  Gaussian profiles, this time forming an initial bump around $x_0=-10$; that is, we set $\rho(0,x)=\mu ~exp(-(x-x_0)^2/(2 \sigma^2))$ for a constant $\mu=1$ and a uniformly varying $\sigma\in[1,2.5]$ with $d\sigma = 0.5$.

Similarly to the Burgers' equation, in spite of the smooth initial conditions, contact discontinuities are formed within the selected time interval.~For the numerical solution via the Godunov-scheme in \cref{sb:FP_god}, it is sufficient to employ the Roe solver.~The scalar, for the LWR equation, Roe matrix takes the form $\hat{a}_{i-1/2} = v_{max} (1-2\bar{\rho})$, where $\bar{\rho}=(\rho_l+\rho_r)/2$; see the derivation in \cref{app:RoeLWR}.~The resulting numerical solution of the LWR equation, using the Van-Leer flux-limiter function, for the Gaussian initial conditions with $\sigma=2$ is depicted in \Cref{fig:solsBR_LWR}c,d.~In contrast to the Burgers' equation, the non-linear flux term of the LWR equation $f(\rho)=v_{max}\rho (1-\rho)$ is not convex, but concave.~Because of that, as shown in \Cref{fig:solsBR_LWR}c, the initial smooth profiles quickly form a right going shock wave, which is now following the right-spreading rarefaction (instead of preceding it, as shown in \Cref{fig:solsBR_LWR}a for the Burgers' equation).~This behavior is well aligned with the relaxation of traffic jams reported for agent-based car-following models; see Chapter 11 of \cite{leveque2002finite} for a direct comparison.~From the 4 numerical solutions of the LWR equation, we randomly split the 15-15-70\% of the time-series data to form the training/validation/testing data sets of the GoRINNs, as discussed in \cref{sb:imp}; the training samples collected from the solution with initial data with $\mu=1$ and $\sigma=2$ are denoted with black circles in \Cref{fig:solsBR_LWR}c.

\subsection{The SW equations}
\label{sb:SW_FP}
The SW equations is a system of $D=2$ non-linear conservation laws describing the flow of an incompressible and inviscid fluid below a pressure surface.~The conservation of mass and momentum in 1 dimension is then modeled by the SW equations in the form of \cref{eq:genPDE} as:
\begin{align}
    \partial_t \begin{bmatrix}
        h \\ q 
    \end{bmatrix} + \partial_x \begin{bmatrix}
        q \\ \dfrac{q^2}{h} + P(h)
    \end{bmatrix} = \begin{bmatrix}
        0 \\ 0 
    \end{bmatrix} \label{eq:SWcon}
\end{align}
where $h(t,x)$ and $q(t,x)$ are the depth and momentum conservative state variables.~The pressure is assumed to be hydrostatic, thus taking the depth-dependent form $P(h)= (g h^2)/2$ where $g>0$ is a rescaled parameter of the gravitational constant.

Although the Riemann problem for the SW equations can be solved exactly \citep{ketcheson2020riemann,leveque2002finite}, here we numerically solve the system in \cref{eq:SWcon} in the 1-dim. spatial domain $x\in[-5,5]$ with periodic boundary conditions, along the time interval $t\in[0,3]$.~In particular, we set $g=1$ and consider $N=200$ cells and a, CFL satisfying, time step $dt=0.01$.~For generating the data, we considered 4 initial conditions, following again Gaussian profiles for $h(0,x)$; we set $h(0,x)=\mu ~exp(-x^2/(2 \sigma^2))$ for a constant $\mu=0.5$ and a uniformly varying $\sigma\in[0.2,0.8]$ with $d\sigma = 0.2$, and $q(0,x)=0$.

Starting from the aforementioned smooth initial data, the SW equations exhibit contact discontinuities within the selected time interval.~Since they are not transonic rarefactions, but only shock waves and rarefactions, it is sufficient to employ the Roe solver for obtaining the  numerical solution via the Godunov-scheme in \cref{sb:FP_god}; for transonic rarefactions the HLLE solver can be employed instead.~The Roe linearized matrix for the SW equations is: 
\begin{equation}
    \hat{\mathbf{A}}_{i-1/2} = \begin{bmatrix}
        0 & 1 \\ -\dfrac{\bar{q}^2}{\bar{h}^2} + g \bar{h} & 2\dfrac{\bar{q}}{\bar{h}}
    \end{bmatrix}, \qquad \text{where} \qquad \bar{h}=\dfrac{h_l+h_r}{2},\qquad \bar{q}=\bar{h} \dfrac{q_l h_l^{-1/2} + q_r h_r^{-1/2}}{h_l^{1/2}+h_r^{1/2}},
    \label{eq:SW_Roe1}
\end{equation}
the latter being the so-called \textit{Roe average}.~For the derivation of $\hat{\mathbf{A}}_{i-1/2}$, which is presented in detail in \cref{app:RoeSW}, we followed the general approach introduced by \cite{roe1981approximate}, resulting in the same Roe matrix as the ones derived in \citep{leveque2002finite,ketcheson2020riemann}.~The resulting numerical solution of the SW equations, using the Van-Leer flux-limiter function, for the Gaussian initial data with $\sigma=0.4$ is depicted in \Cref{fig:solsSW_PW}a,b,c for $h$ and $q$, respectively.~It is therein shown that after a fast transient period, the initially smooth data form one left and one right going shock waves, which are followed by rarefactions spreading towards the opposite direction of the shock wave movement.~In fact, $h$ is symmetric along the spatial domain, while $q$ is antisymmetric; see \Cref{fig:solsSW_PW}c.~From the 4 numerical solutions of the SW equations, we randomly split the 15-15-70\% of the time-series data to form the training/validation/testing data sets of the GoRINNs, as discussed in \cref{sb:imp}; the training samples collected from the solution with initial data with $\mu=0.5$ and $\sigma=0.4$ are denoted with black circles in \Cref{fig:solsSW_PW}a,b.
\begin{figure}[!h]
    \centering
    \subfigure[SW equations, $h(t,x)$]{\includegraphics[width=0.32\textwidth]{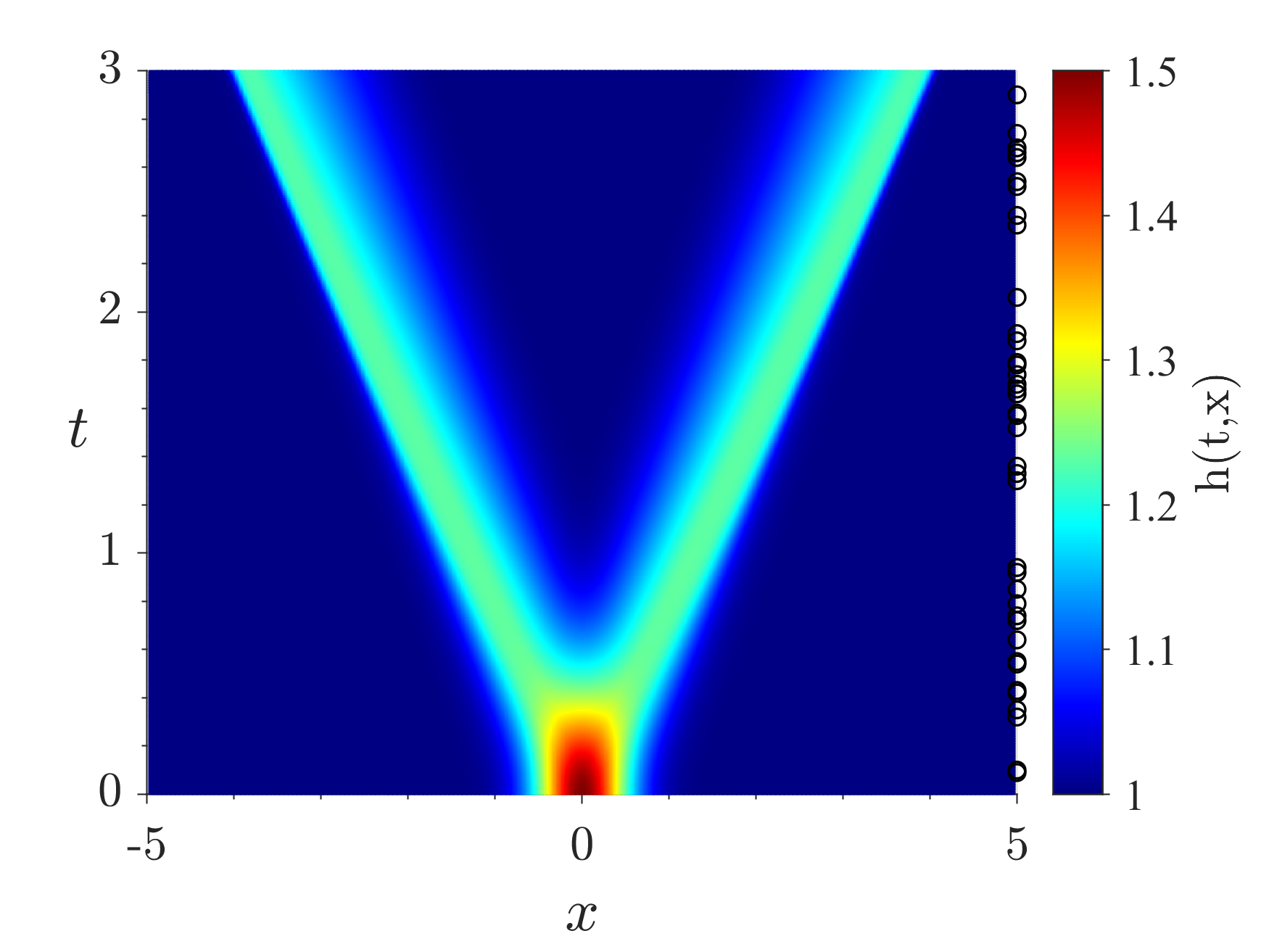}}
    \subfigure[SW equations, $q(t,x)$]{\includegraphics[width=0.32\textwidth]{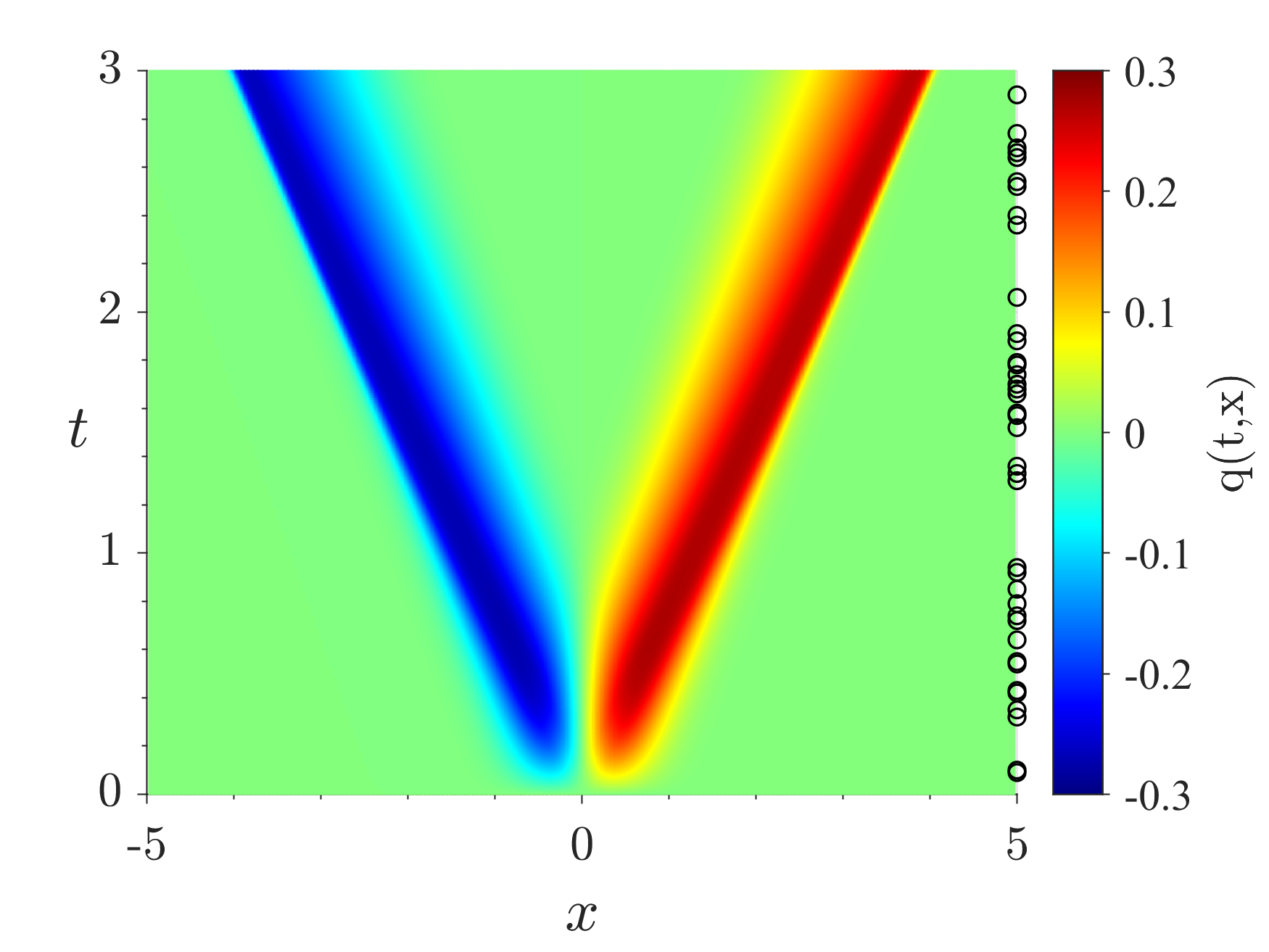}}
    \subfigure[SW equations, $h(0,x)$, $q(0,x)$, $h(3,x)$ and $q(3,x)$]{\includegraphics[width=0.32\textwidth]{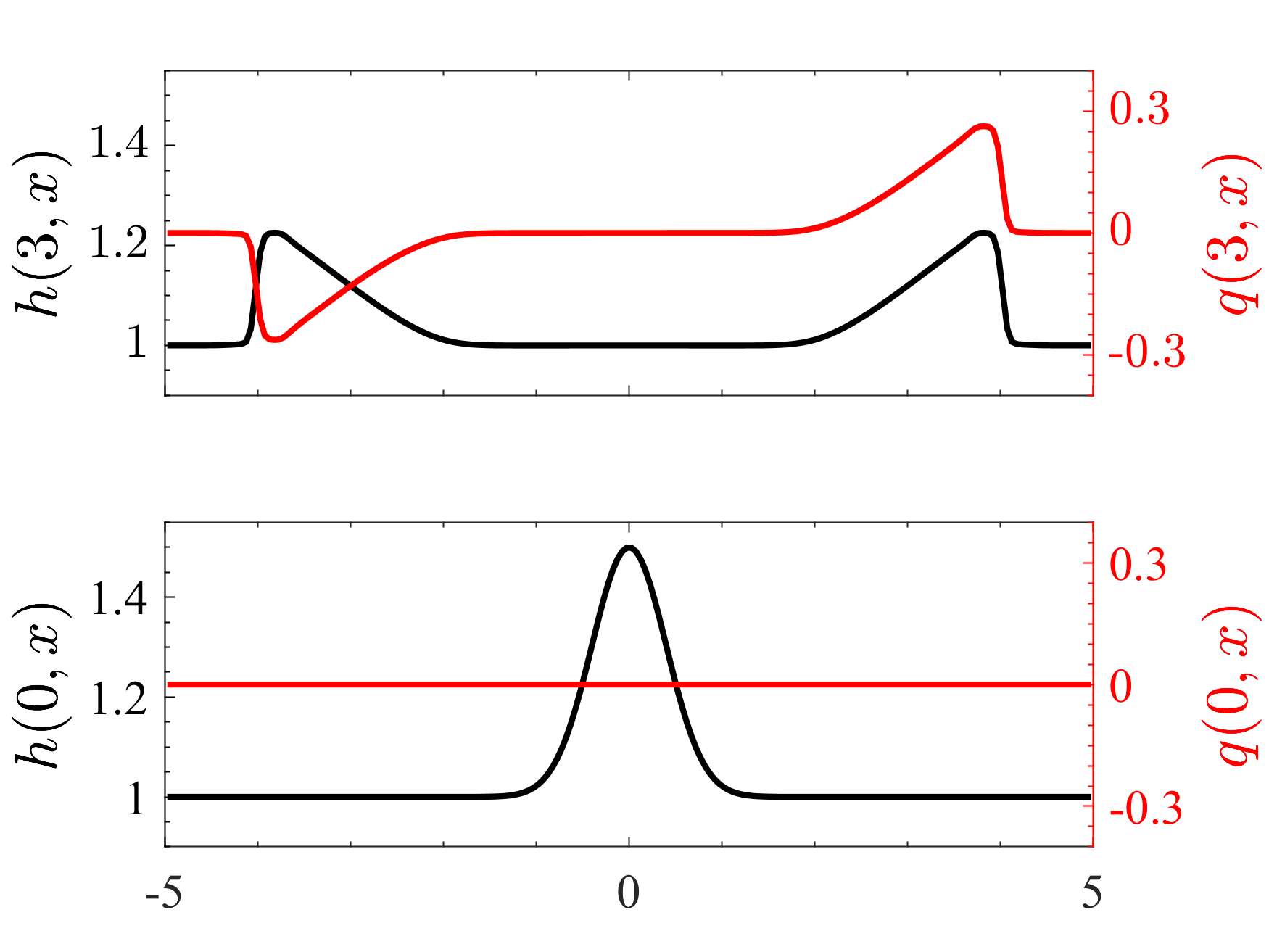}} \\
    \subfigure[PW equations, $\rho(t,x)$]{\includegraphics[width=0.32\textwidth]{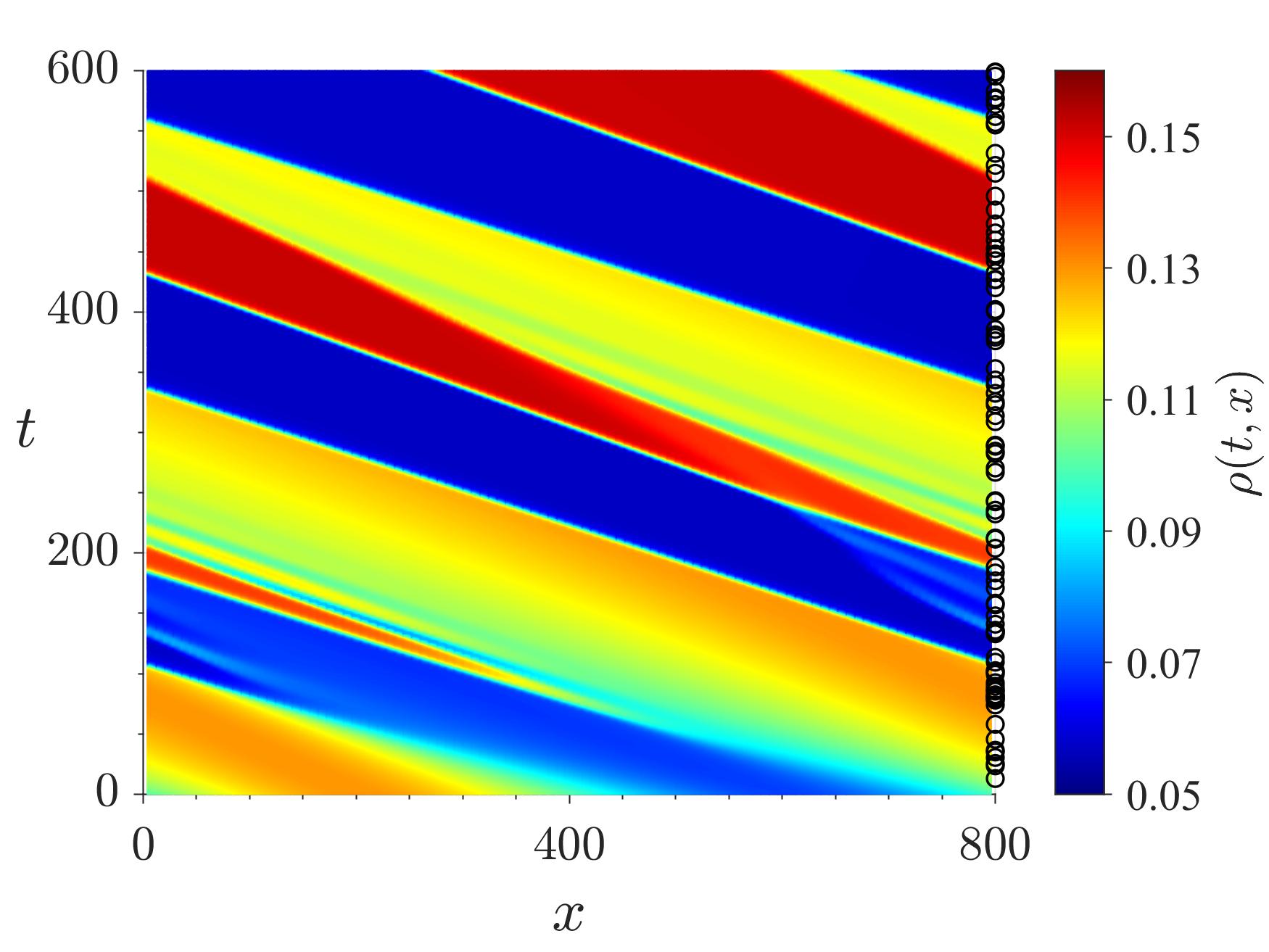}}
    \subfigure[PW equations, $q(t,x)$]{\includegraphics[width=0.32\textwidth]{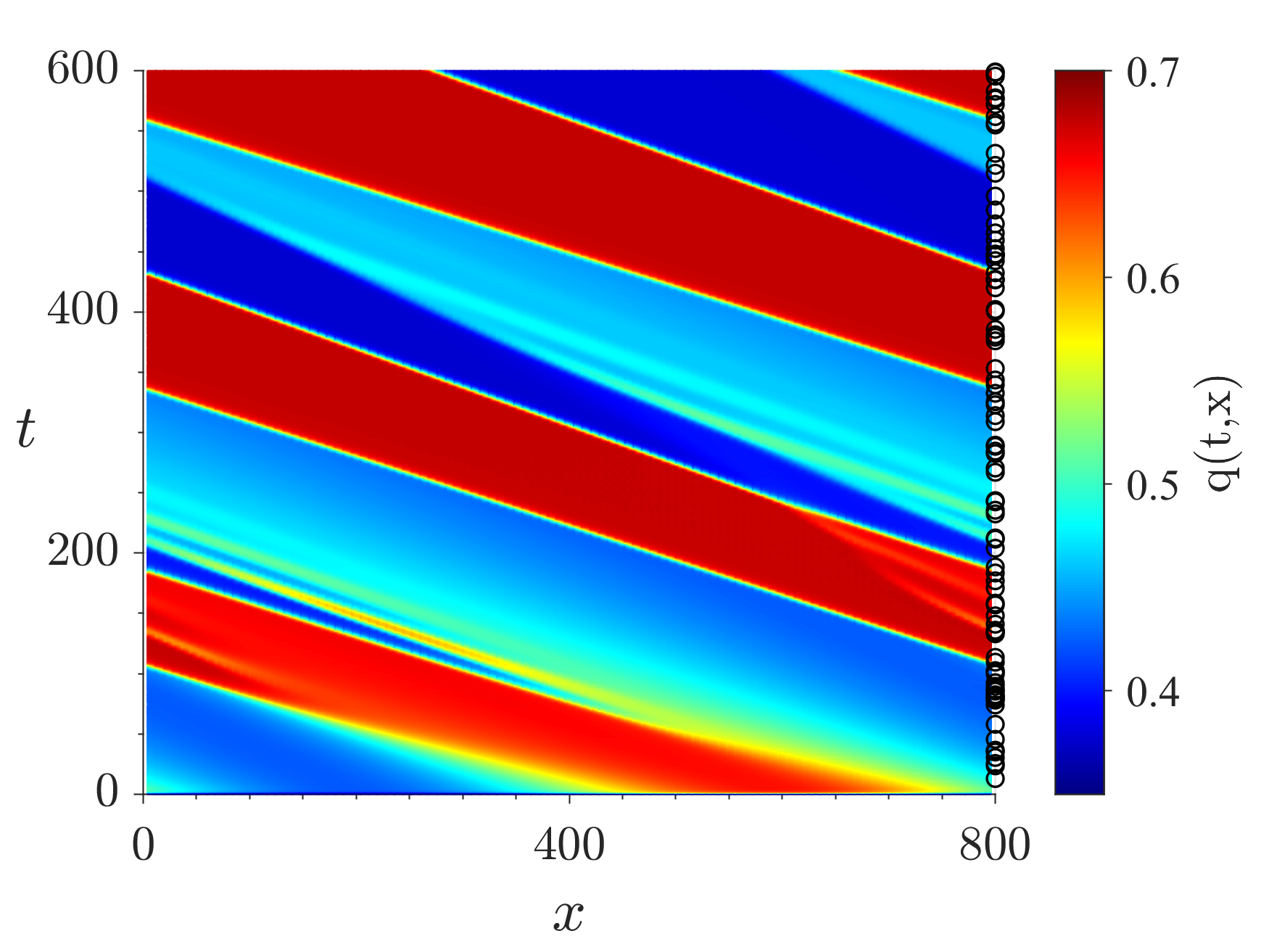}}
    \subfigure[PW equations, $\rho(0,x)$, $q(0,x)$, $\rho(600,x)$ and $q(600,x)$]{\includegraphics[width=0.32\textwidth]{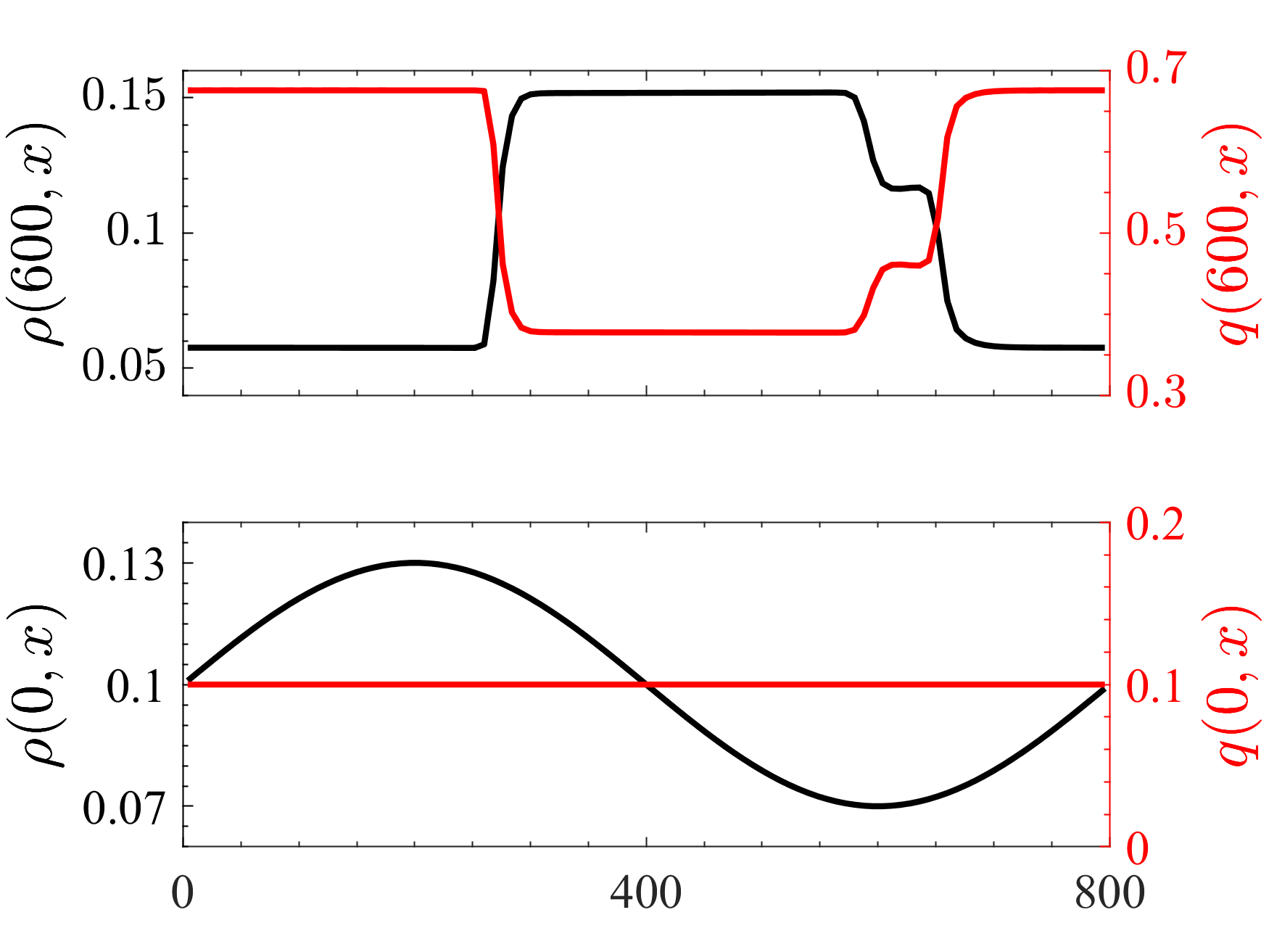}} 
    \caption{Numerical solution of the SW (depth, momentum) and PW (density, momentum) equations obtained with the high-resolution Godunov-scheme in \cref{sb:FP_god} for (a,b,c) $x\in[-5,5]$, $t\in[0,3]$, with the initial data $h(0,x)=0.5 ~exp(-x^2/(2 \cdot 0.4^2))$ and $q(0,x)=0$ shown in panel (c), and (d,e,f) $x\in[0,800]$, $t\in[0,600]$, with the initial data $\rho(0,x)=0.1~(1+0.3sin(2\pi x/L))$ and $q(0,x)=0.1$ shown in panel (f).~Periodic boundary conditions are used for both SW and PW equations.~The black circles on (a,b,c,d) denote the time steps $n_t$, collected from the depicted numerical solutions, to form the training set of GoRINNs.}
    \label{fig:solsSW_PW}
\end{figure}

\subsection{The PW equations of traffic flow}
\label{sb:PW_FP}
The PW equations \citep{payne1971model,whitham2011linear} is a system of $D=2$ non-linear conservation laws, describing a class of macroscopic second-order traffic models.~The traffic flow is modelled by the PW equations by a non-homogeneous system in the form of \cref{eq:genPDE} as:
\begin{align}
    \partial_t \begin{bmatrix}
        \rho \\ q
    \end{bmatrix} + \partial_x \begin{bmatrix}
        q \\ \dfrac{q^2}{\rho} + P(\rho)
    \end{bmatrix} = \begin{bmatrix}
        0 \\ \dfrac{\rho V_e(\rho)-q}{\tau},
    \end{bmatrix} \label{eq:PWcon}
\end{align}
where $\rho(t,x)$ is the car density, as in the LWR model; see \cref{eq:LWR}.~However, here the velocity $v(t,x)$ is not only density-dependent, and is instead modelled by the second PW equation through the momentum $q(t,x)=\rho(t,x)v(t,x)$; we use the momentum equation to obtain a system in a conservative form.~The PW equations in \cref{eq:PWcon} include a source term and a density-dependent traffic pressure term $P\equiv P(\rho)$, which is given by:
\begin{equation}
    P(\rho) = \dfrac{V_0-V_e(\rho)}{2\tau}, \label{eq:PWpres}
\end{equation}
where $\tau$ expresses the speed relaxation time and $V_0/(2\tau)$ is a constant term, such that the pressure is zero when $\rho\rightarrow 0$.

The PW equations in \cref{eq:PWcon,eq:PWpres} consist a class of macroscopic second-order traffic models which take into account the individual behavior of the driver \cite{treiber2013traffic,helbing1998generalized}.~This is incorporated by the selection of microscopic velocity function $V_e(\rho)$, which is usually determined over experimental data from individual cars \citep{treiber2013traffic}.~Here, we consider the optimal velocity (OV) function \citep{bando1995dynamical}, reading:
\begin{equation}
    V_e(\rho) = V(\rho^{-1}) = v_0 \dfrac{tanh(\gamma \rho^{-1}-\beta) + tanh(\beta)}{1+tanh(\beta)},
    \label{eq:OVF}
\end{equation}
where $v_0$, $\gamma$ and $\beta$ are parameters of the individualistic car description, expressing the desired speed, the transition width and the form factor, respectively \citep{treiber2013traffic}.~Selecting the OV function implies that $V_0=v_0$ in the pressure term of \cref{eq:PWpres}.

For our simulations, we consider a city scenario with $\tau=0.65~s$, $v_0=15~m/s$, $\gamma=1/8~m^{-1}$ and $\beta=1.5$ \citep{treiber2013traffic}, in a road of length $L=800~m$.~First, we rescale the PW equations, so that $\rho=\mathcal{O}(1)$, and then we numerically solve them in the 1-dim. spatial domain $x\in[0,L]$ with periodic boundary conditions (a ring road), along the time interval $t\in[0,600]$.~In particular, we consider $N=100$ cells and a, CFL satisfying, time step $dt=0.5$.~To generate the data, we consider 4 different initial conditions following a sinusoidal perturbation $\rho(0,x)=\rho^*(1+\mu~sin(2\pi x/L))$ of a free flow density $\rho^*=0.1$, for a uniformly varying $\mu\in[0.1,0.4]$ with $d\mu = 0.1$; the momentum is set to $q(0,x)=0.1$.

Starting from the aforementioned smooth initial conditions, the PW equations develop travelling waves for the parameter set considered; such a behavior has been reported in agent-based, OV function considering, car-following models \citep{patsatzis2023data,marschler2014implicit}.~During the transition to the travelling waves, the formed contact discontinuities include transonic rarefactions.~Hence, for obtaining the numerical solution via the Godunov-scheme in \cref{sb:FP_god}, the Roe solver is insufficient and thus we employed the HLLE one.~The latter solver also requires the Roe matrix which, for the PW equations, is:
\begin{equation}
    \hat{\mathbf{A}}_{i-1/2}=\begin{bmatrix}
        0 & 1 \\ -\dfrac{\bar{q}^2}{\bar{\rho}^2} - \bar{V} & \dfrac{2 \bar{q}}{\bar{\rho}}
    \end{bmatrix}, ~~ \text{where} ~~ \bar{\rho}= \dfrac{\rho_l+\rho_r}{2}, ~~ \bar{q} =\bar{\rho} \dfrac{q_l \rho_l^{-1/2} + q_r \rho_r^{-1/2}}{\rho_l^{1/2} + \rho_r^{1/2}}, ~~ \bar{V} = \dfrac{V(\rho_l^{-1})-V(\rho_r^{-1})}{2 \tau (\rho_l-\rho_r)}.
    \label{eq:PW_Roe1}
\end{equation}
For the derivation of $\hat{\mathbf{A}}_{i-1/2}$, we followed the general approach introduced by \cite{roe1981approximate}, as presented in detail in \cref{app:RoePayne}.~The resulting numerical solution of the PW equations, using the Van-Leer flux-limiter function, for the sinusoidal initial data with $\mu=0.3$ is depicted in \Cref{fig:solsSW_PW}d,e,f for $\rho$ and $q$, respectively.~It is therein shown that the initial pulse gradually forms a left going shock wave, shown by the abrupt difference of low (high) values of $\rho$ ($q$) to high (low)  ones.~Trailing to the right of the travelling shock wave, fluctuations develop which after $t>500$ lead to the development of another left going shock, with the opposite behavior; abrupt differences from high (low) values of $\rho$ ($q$) to low (high) ones.~From the 4 numerical solutions of the PW equations, we randomly split the 7.5-7.5-85\% of the time-series data to form the training/validation/testing data sets of the GoRINNs, as discussed in \cref{sb:imp}; the training samples collected from the solution with initial data with $\mu=0.3$ are denoted with black circles in \Cref{fig:solsSW_PW}d,e.
\renewcommand{\theequation}{B.\arabic{equation}}
\renewcommand{\thefigure}{B.\arabic{figure}}
\renewcommand{\theproposition}{B.\arabic{proposition}}
\setcounter{equation}{0}
\setcounter{figure}{0}
\setcounter{proposition}{0}
\section{Approximate Riemann solvers: Roe and HLLE solvers}
\label{app:linRiem}
As discussed in \cref{sec:meth}, to locally approximate the solution of the Riemann problem with a contact discontinuity between the left and right states $\mathbf{q}_l$ and $\mathbf{q}_r$, a linearized matrix $\hat{\mathbf{A}}_{i-1/2}\equiv\hat{\mathbf{A}}_{i-1/2}(\mathbf{q}_l,\mathbf{q}_r)$ should be derived for the non-linear system of PDEs in \cref{eq:genPDE}, with the following properties \citep{leveque2002finite,roe1981approximate}:
\begin{enumerate}[label=(\roman*)]
    \item $\hat{\mathbf{A}}_{i-1/2}(\mathbf{q}_l,\mathbf{q}_r)\rightarrow \partial_{\mathbf{u}} \mathbf{f}(\hat{\mathbf{q}})$ as $\mathbf{q}_l, \mathbf{q}_r \rightarrow \hat{\mathbf{q}}$, for consistency with the original non-linear system of PDEs,
    \item $\hat{\mathbf{A}}_{i-1/2}(\mathbf{q}_l,\mathbf{q}_r)$ diagonalizable with real eigenvalues, for hyperbolicity of the linearized system in \cref{eq:linRiem}, and
    \item $\hat{\mathbf{A}}_{i-1/2} (\mathbf{q}_l,\mathbf{q}_r) \cdot (\mathbf{q}_r-\mathbf{q}_l) = \mathbf{f}(\mathbf{q}_r) -\mathbf{f}(\mathbf{q}_l)$, for ensuring that a Godunov-type numerical scheme, when employed to the linearized system in \cref{eq:linRiem}, is conservative (RH condition for systems).
\end{enumerate}
The first two are essential for any linearized Riemann solver, and the latter one characterizes a Roe solver; when all three properties are satisfied, $\hat{\mathbf{A}}_{i-1/2}$ is called Roe matrix.

For the construction of a Roe matrix \cite{roe1981approximate}, one approach is to integrate the Jacobian matrix $\partial_\mathbf{u} \mathbf{f}(\mathbf{u})$ over a suitable path between $\mathbf{q}_l$ and $\mathbf{q}_r$, since it satisfies the first and the third properties.~By considering the line path $\mathbf{u}(\xi) = \mathbf{q}_l + (\mathbf{q}_r-\mathbf{q}_l)\xi$ for $0<\xi<1$, the flux function difference can be written as \citep{leveque2002finite}:
\begin{equation}
     \mathbf{f}(\mathbf{q}_r) -\mathbf{f}(\mathbf{q}_l) = \left[ \int_0^1 \partial_{\mathbf{u}} \mathbf{f}(\mathbf{u}(\xi)) d\xi \right] \left(\mathbf{q}_r - \mathbf{q}_l\right),
\end{equation}
and thus define $\hat{\mathbf{A}}_{i-1/2}=\int_0^1  \partial_{\mathbf{u}} \mathbf{f}(\mathbf{u}(\xi))~d\xi$.~However, this selection doesn't necessarily satisfy the second property of hyperbolicity.~More importantly, the integrals of the Jacobian matrix cannot be computed in a closed form.~To overcome these obstacles, average functions of the left and right states $\mathbf{q}_r$ and $\mathbf{q}_l$ are frequently used (the simplest one being the arithmetic average) for defining an average state $\mathbf{\bar{q}}$ on the interface $i-1/2$, upon which the Roe matrix is computed.~This idea was first introduced by Roe in \citep{roe1981approximate}, resulting in the linearization approach briefly described below.

\subsection{Roe solver}
\label{app:RoeLin}
Roe \cite{roe1981approximate} introduced a change of state variables $\mathbf{z}(\mathbf{u})$ that leads to simpler integral expressions.~In addition, this mapping is invertible, allowing us to derive $\mathbf{u}(\mathbf{z})$, and thus cast the flux function as $\mathbf{f}(\mathbf{u}(\mathbf{z}))=\mathbf{f}(\mathbf{z})$.~Then, one can integrate along the line path $\mathbf{z}(\xi) = \mathbf{z}_l+ (\mathbf{z}_r-\mathbf{z}_l)\xi$ for $0<\xi<1$, where $\mathbf{z}_{l,r} = \mathbf{z}(\mathbf{q}_{l,r})$.~Since $\partial_\xi \mathbf{z}(\xi) = \mathbf{z}_r-\mathbf{z}_l$, the flux function difference now takes the form \citep{leveque2002finite}:
\begin{equation}
     \mathbf{f}(\mathbf{q}_r) -\mathbf{f}(\mathbf{q}_l) = \left[ \int_0^1 \partial_{\mathbf{z}} \mathbf{f}(\mathbf{z}(\xi)) d\xi \right] \left(\mathbf{z}_r - \mathbf{z}_l\right) = \hat{\mathbf{C}}_{i-1/2} \left(\mathbf{z}_r - \mathbf{z}_l\right),
     \label{eq:Roe1}
\end{equation}
where $\hat{\mathbf{C}}_{i-1/2}\in\mathbb{R}^{D\times D}$ includes the integrals of each component of $\partial_{\mathbf{z}} \mathbf{f}(\mathbf{z}(\xi))$.~A parsimonious selection of $\mathbf{z}(\mathbf{u})$ makes these integrals computable in a closed form.~Now, given that $\mathbf{z}(\mathbf{u})$ is invertible, one easily derives:
\begin{equation}
     \mathbf{q}_r -\mathbf{q}_l = \left[ \int_0^1 \partial_{\mathbf{z}} \mathbf{u}(\mathbf{z}(\xi)) d\xi \right] \left(\mathbf{z}_r - \mathbf{z}_l\right) = \hat{\mathbf{B}}_{i-1/2} \left(\mathbf{z}_r - \mathbf{z}_l\right),
     \label{eq:Roe2}
\end{equation}
where $\hat{\mathbf{B}}_{i-1/2}\in\mathbb{R}^{D\times D}$ includes the integrals of each component of $\partial_{\mathbf{z}} \mathbf{u}(\mathbf{z}(\xi))$.~Following \cref{eq:Roe1,eq:Roe2}, the desired matrix is computed as $\hat{\mathbf{A}}_{i-1/2}=\hat{\mathbf{C}}_{i-1/2} \hat{\mathbf{B}}_{i-1/2}^{-1}$, and satisfies the third property that $\hat{\mathbf{A}}_{i-1/2}$ should possess.~Usually the first property is also satisfied, while the second one (regarding hyperbolicity) holds, under specific conditions of the entropy function \citep{harten1983upstream}, which are not explored in this work.

Alternatively to the above general derivation of $\hat{\mathbf{A}}_{i-1/2}$ \citep{leveque2002finite}, a ``reverse engineering" approach was introduced in \citep{ketcheson2020riemann}, which finds some \textit{average} state $\mathbf{\bar{q}}$ as a function of $\mathbf{q}_r$ and $\mathbf{q}_l$, such that $\hat{\mathbf{A}}_{i-1/2}(\mathbf{q}_l,\mathbf{q}_r)=\partial_{\mathbf{u}} \mathbf{f}(\mathbf{\bar{q}})$.~This selection satisfies by definition the first property, and also the third one, when the average state $\mathbf{\bar{q}}$ is computed via:
\begin{equation}
    \partial_{\mathbf{u}} \mathbf{f}(\mathbf{\bar{q}}) \cdot (\mathbf{q}_r-\mathbf{q}_l) = \mathbf{f}(\mathbf{q}_r) -\mathbf{f}(\mathbf{q}_l)
    \label{eq:RoeAlt}
\end{equation}
This technique usually results in the same matrices $\hat{\mathbf{A}}_{i-1/2}$ as the ones derived by \cref{eq:Roe1,eq:Roe2} in \citep{leveque2002finite}, as was the case for all the benchmark problems considered in this work, except from the PW equations, where the later alternative techinique cannot be employed due to the non-linear flux function.~We hereby note that the Roe linearized matrix for the Burger's, LWR and SW system of PDEs was cross-validated with expressions provided in \citep{ketcheson2020riemann,leveque2002finite}, while for the PW system, it was derived in this work.

\subsection{HLLE solver}
\label{app:HLLELin}

In cases where the Roe linearization fails, the HLLE solver, introduced in \citep{harten1983upstream,einfeldt1988godunov}, can be used as an alternative approximate Riemann solver.~According to the HLLE solver, the Riemann problem is approximated by only two waves, propagating with the smallest and largest speeds:
\begin{equation}
    s^1_{i-1/2} = \min_{p}(\min(\lambda_{i-1}^p,\hat{\lambda}_{i-1/2}^p)), \qquad 
    s^2_{i-1/2} = \max_{p}(\max(\lambda_i^p,\hat{\lambda}_{i-1/2}^p)) 
    \label{eq:HLLEs}
\end{equation}
where $\lambda_{i-1}^p$ and $\lambda_i^p$ are the $p$-th eigenvalues of the Jacobian $\partial \mathbf{f}(\mathbf{u})/\partial \mathbf{u}$ at $\mathbf{q}_l$ and $\mathbf{q}_r$ respectively, while $\hat{\lambda}_{i-1/2}^p$ is the $p$-th eigenvalue of the $\hat{\mathbf{A}}_{i-1/2}$ derived via Roe linearization.~Using the above selection, the middle state of the Riemann problem's approximate solution is then computed as:
\begin{equation}
    \mathbf{q}_m = \dfrac{\mathbf{f}(\mathbf{q}_r) - \mathbf{f}(\mathbf{q}_l)-s^2_{i-1/2} \mathbf{q}_r+s^1_{i-1/2} \mathbf{q}_l}{s^1_{i-1/2}-s^2_{i-1/2}},
    \label{eq:HLLEms}
\end{equation}
that is also conservative.~In turn, the two waves are approximated as:
\begin{equation}
    \mathbf{W}^1_{i-1/2} = \mathbf{q}_m - \mathbf{q}_l, \qquad
    \mathbf{W}^2_{i-1/2} = \mathbf{q}_r - \mathbf{q}_m. 
    \label{eq:HLLEwaves}
\end{equation}

The HLLE solver performs similarly to the Roe solver, without however requiring the entropy fixes that the latter requires when facing rarefaction waves (see \citep{leveque2002finite}).~However, the HLLE solver includes only $M_w=2$ waves, which may affect the resolution of systems with more than two variables.~In this work, this solver was only employed for the PW system of hyperbolic PDEs.


\renewcommand{\theequation}{C.\arabic{equation}}
\renewcommand{\thefigure}{C.\arabic{figure}}
\renewcommand{\theproposition}{C.\arabic{proposition}}
\setcounter{equation}{0}
\setcounter{figure}{0}
\setcounter{proposition}{0}
\section{Roe solvers for the benchmark problems}
\label{app:RS4bp}
Here, we provide the expressions of the Roe matrices and the resulting speeds and waves used in the - Roe/HLLE solver based - FV high-resolution Godunov-type method for each benchmark problem considered.

\subsection{The Burger's equation}
\label{app:RoeBurg}
Recall that the Burger's equation in \cref{eq:Burg} is written in the form of a scalar PDE in \cref{eq:genPDE}, where
\begin{equation}
    \mathbf{u} = u, \quad 
    \mathbf{f}(\mathbf{u}) = u^2/2, \quad 
    \partial_{\mathbf{u}} \mathbf{f}(\mathbf{u}) =  u.
\end{equation}
Since this is a scalar hyperbolic PDE, the Roe matrix is scalar, say $\hat{a}_{i-1/2}$, and is provided by the following proposition:
\begin{proposition}
    \cite{leveque2002finite} The Roe matrix of the Burger's equation in \cref{eq:Burg} is the simple arithmetic average: 
    \begin{equation}
        \hat{a}_{i-1/2}=(u_r+u_l)/2. \label{eq:BurgEv}
    \end{equation}
\end{proposition}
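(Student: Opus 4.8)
The plan is to verify, for the scalar candidate $\hat{a}_{i-1/2} = (u_r + u_l)/2$, the three defining properties of a Roe matrix listed in \Cref{app:linRiem}, with the Rankine--Hugoniot (RH) condition being the only one requiring any computation. Since the Burgers' equation is scalar, $\hat{a}_{i-1/2}$ is simply a real number; hence property (ii), diagonalizability with real eigenvalues, is automatic for any real-valued choice, and no entropy-function analysis is needed.

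The decisive step is the RH condition (property (iii)): one needs $\hat{a}_{i-1/2}(u_r - u_l) = f(u_r) - f(u_l)$ with $f(u) = u^2/2$. The key elementary identity is the difference-of-squares factorization $\tfrac{1}{2}u_r^2 - \tfrac{1}{2}u_l^2 = \tfrac{1}{2}(u_r - u_l)(u_r + u_l)$, which, when $u_r \neq u_l$, forces $\hat{a}_{i-1/2} = (u_r + u_l)/2$; when $u_r = u_l$ both sides vanish and $(u_r+u_l)/2$ is the continuous extension. Finally, consistency (property (i)) is immediate: as $u_l, u_r \to \hat{u}$ we have $(u_r + u_l)/2 \to \hat{u} = \partial_u f(\hat{u})$, so the linearization recovers the true Jacobian in the limit.

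Equivalently, I could present this via either systematic route of \Cref{app:RoeLin}: the ``reverse engineering'' approach sets $\hat{a}_{i-1/2} = \partial_u f(\bar{u}) = \bar{u}$ and solves $\bar{u}(u_r - u_l) = \tfrac{1}{2}u_r^2 - \tfrac{1}{2}u_l^2$ for the average state, giving $\bar{u} = (u_r + u_l)/2$; Roe's original integral construction with the trivial change of variables $z(u)=u$ along the line path $u(\xi) = u_l + (u_r - u_l)\xi$ gives $\hat{a}_{i-1/2} = \int_0^1 \bigl(u_l + (u_r - u_l)\xi\bigr)\,d\xi = (u_r + u_l)/2$ directly. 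There is essentially no obstacle here — the entire content is the one-line factorization $\tfrac12(u_r^2-u_l^2) = \tfrac12(u_r-u_l)(u_r+u_l)$ — and the scalar setting trivializes the remaining two properties.
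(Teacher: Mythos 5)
Your proposal is correct and matches the paper's argument: the paper likewise uses the ``reverse engineering'' route, solving $\bar{u}(u_r-u_l)=(u_r^2-u_l^2)/2$ via the difference-of-squares factorization to get $\bar{u}=(u_r+u_l)/2$, and then notes that the three Roe properties (consistency, real scalar, RH by construction) follow immediately. Your additional remark that Roe's integral construction $\int_0^1\bigl(u_l+(u_r-u_l)\xi\bigr)\,d\xi$ gives the same answer is a harmless extra, not a different method.
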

\begin{proof}
Following the ``reverse engineering'' technique presented in \cref{app:RoeLin}, we solve \cref{eq:RoeAlt} to define $\bar{\mathbf{q}} = \bar{u}$, which implies:
\begin{equation}
    \bar{u} (u_r-u_l) = (u_r^2-u_l^2)/2 \Rightarrow \bar{u} = (u_r+u_l)/2. \label{eq:RHcBurg}
\end{equation} 
The resulting scalar $\hat{a}_{i-1/2} = \bar{u}= (u_r+u_l)/2$ satisfies all three properties of a Roe matrix, as discussed in \cref{app:RoeLin}, since (i) $\hat{a}_{i-1/2}(u_l,u_r)\rightarrow \partial_{u} \mathbf{f}(\hat{u})$ as $u_l, u_r \rightarrow \hat{u}$, (ii) $\hat{a}_{i-1/2}\in \mathbb{R}$ and (iii) the RH condition is satisfied by construction.
\end{proof}
To perform the FV update in \cref{eq:HRupdate} with the Roe solver, we set $u_l= \mathbf{Q}^n_{i-1}$ and $u_r = \mathbf{Q}^n_i$ at the $n$-th time step and $i$-th cell, and compute, from the Roe matrix $\hat{a}_{i-1/2}$ in \cref{eq:BurgEv}, the speed $s^p_{i-1/2}=\hat{a}_{i-1/2}$ and the wave $\mathbf{W}^p_{i-1/2}=u_r-u_l$ for $p=1$.

\subsection{The LWR equation of traffic flow}
\label{app:RoeLWR}
Recall that the scalar PDE of the LWR model in \cref{eq:LWR} is written in the form of \cref{eq:genPDE}, where
\begin{equation}
    \mathbf{u} = \rho, \quad 
    \mathbf{f}(\mathbf{u}) = \rho v(\rho), \quad 
    \partial_{\mathbf{u}} \mathbf{f}(\mathbf{u}) =  v(\rho)+\rho v'(\rho),
\end{equation}
for $v(\rho)=v_{max}(1-\rho)$ and $v'(\rho)=-v_{max}$.~Since the LWR model is a scalar hyperbolic PDE, the Roe matrix is scalar as well, say $\hat{a}_{i-1/2}$, and is provided by the following proposition:
\begin{proposition}
    \cite{leveque2002finite} The Roe matrix of the LWR equation in \cref{eq:LWR} is:
    \begin{equation}
        \hat{a}_{i-1/2} = v_{max}(1-\rho_r-\rho_l). \label{eq:LWREv}
    \end{equation}
\end{proposition}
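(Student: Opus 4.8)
The plan is to follow the same ``reverse engineering'' recipe used for the Burgers' equation: since the LWR model is scalar, I seek a single average state $\bar{\rho}=\bar{\rho}(\rho_l,\rho_r)$ such that the scalar Roe coefficient $\hat{a}_{i-1/2}=\partial_{\rho}\mathbf{f}(\bar{\rho})$ reproduces the flux increment exactly, i.e. so that the Rankine--Hugoniot condition $\hat{a}_{i-1/2}(\rho_r-\rho_l)=\mathbf{f}(\rho_r)-\mathbf{f}(\rho_l)$ holds. Here $\mathbf{f}(\rho)=\rho v(\rho)=v_{\max}\rho(1-\rho)=v_{\max}(\rho-\rho^2)$, and its derivative is $\partial_{\rho}\mathbf{f}(\rho)=v_{\max}(1-2\rho)$.

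First I would compute the flux difference explicitly. Using $\mathbf{f}(\rho)=v_{\max}(\rho-\rho^2)$, we get $\mathbf{f}(\rho_r)-\mathbf{f}(\rho_l)=v_{\max}\big((\rho_r-\rho_l)-(\rho_r^2-\rho_l^2)\big)=v_{\max}(\rho_r-\rho_l)\big(1-(\rho_r+\rho_l)\big)$, where I factored $\rho_r^2-\rho_l^2=(\rho_r-\rho_l)(\rho_r+\rho_l)$. Next I would divide by $\rho_r-\rho_l$ (for $\rho_r\neq\rho_l$; the case $\rho_r=\rho_l$ is handled by continuity) to obtain that the Roe coefficient must equal $v_{\max}(1-\rho_r-\rho_l)$. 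Then I would check that this is consistent with setting $\hat{a}_{i-1/2}=\partial_{\rho}\mathbf{f}(\bar{\rho})=v_{\max}(1-2\bar{\rho})$, which forces $\bar{\rho}=(\rho_l+\rho_r)/2$, the arithmetic average. This recovers \cref{eq:LWREv}.

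Finally I would verify the three defining properties of a Roe matrix listed in \Cref{app:RoeLin}: (i) consistency, since as $\rho_l,\rho_r\to\hat{\rho}$ we have $\hat{a}_{i-1/2}=v_{\max}(1-\rho_r-\rho_l)\to v_{\max}(1-2\hat{\rho})=\partial_{\rho}\mathbf{f}(\hat{\rho})$; (ii) hyperbolicity, which is automatic because $\hat{a}_{i-1/2}\in\mathbb{R}$ for a scalar equation; and (iii) the RH condition, which holds by construction from the computation above. To close, I would note that for the FV update in \cref{eq:HRupdate} one sets $\rho_l=\mathbf{Q}^n_{i-1}$, $\rho_r=\mathbf{Q}^n_i$ and takes the speed $s^p_{i-1/2}=\hat{a}_{i-1/2}$ and wave $\mathbf{W}^p_{i-1/2}=\rho_r-\rho_l$ for $p=1$, exactly as in the Burgers' case.

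There is no real obstacle here: the only subtlety is the removable singularity at $\rho_r=\rho_l$, which is resolved by the factorization of the difference of squares (so the quotient extends continuously), and the mild point that the concavity of $\mathbf{f}$ (as opposed to the convexity of the Burgers' flux) does not affect the algebra of the Roe linearization at all — it only matters for entropy-fix considerations, which, as the excerpt notes, are not needed for the non-transonic rarefactions arising in this benchmark.
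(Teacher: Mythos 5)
Your proposal is correct and follows essentially the same route as the paper: the ``reverse engineering'' step of imposing $\partial_\rho \mathbf{f}(\bar{\rho})(\rho_r-\rho_l)=\mathbf{f}(\rho_r)-\mathbf{f}(\rho_l)$, solving to find $\bar{\rho}=(\rho_l+\rho_r)/2$, and then verifying consistency, realness (trivial in the scalar case), and the RH condition by construction. You merely make explicit the ``some calculations'' the paper elides (the factorization of $\rho_r^2-\rho_l^2$ and the removable singularity at $\rho_r=\rho_l$), which is a welcome but not substantively different elaboration.
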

\begin{proof}
Following the ``reverse engineering'' technique presented in \cref{app:RoeLin}, we solve \cref{eq:RoeAlt} to define $\bar{\mathbf{q}} = \bar{\rho}$, which implies:
\begin{equation}
    \left(v(\bar{\rho}) + \bar{\rho} v'(\bar{\rho}) \right) (\rho_r-\rho_l) = \rho_r v(\rho_r)-\rho_l v(\rho_l)  \label{eq:RHcLWR}
\end{equation}
After some calculations, the above equation results to the common average $\bar{\rho} = (\rho_r+\rho_l)/2$ for any $v_{max}>0$ and thus, $\hat{a}_{i-1/2}$ is given by
\begin{equation}
\hat{a}_{i-1/2}=v(\bar{\rho}) + \bar{\rho} v'(\bar{\rho}) = v_{max}(1-2\bar{\rho}). 
\end{equation}
The resulting $\hat{a}_{i-1/2}$ satisfies all three properties of a Roe matrix, as discussed in \cref{app:RoeLin}, since (i) $\hat{a}_{i-1/2}(\rho_l,\rho_r)\rightarrow \partial_{\rho} \mathbf{f}(\hat{\rho})$ as $\rho_l, \rho_r \rightarrow \hat{\rho}$, (ii) $\hat{a}_{i-1/2}\in \mathbb{R}^N$, and (iii) the RH condition is satisfied by construction.
\end{proof}

Similarly to the Burger's equation, to perform the FV update in \cref{eq:HRupdate} with the Roe solver, we set $\rho_l= \mathbf{Q}^n_{i-1}$ and $\rho_r = \mathbf{Q}^n_i$ at the $n$-th time step and $i$-th cell, and compute, from the Roe matrix $\hat{a}_{i-1/2}$ in \cref{eq:LWREv}, the speed $s^p_{i-1/2}=\hat{a}_{i-1/2}$ and the wave $\mathbf{W}^p_{i-1/2}=\rho_r-\rho_l$ for $p=1$.

\subsection{The SW equations}
\label{app:RoeSW}
As a reminder, recall that the SW system of PDEs in \cref{eq:SWcon} is written in the form of \cref{eq:genPDE}, where
\begin{equation}
    \mathbf{u} =  \begin{bmatrix}
        h \\ q
    \end{bmatrix}, \quad \mathbf{f}(\mathbf{u}) = \begin{bmatrix}
        q \\ \dfrac{q^2}{h} + \dfrac{1}{2} g h^2  \end{bmatrix}, \quad \partial_{\mathbf{u}} \mathbf{f}(\mathbf{u}) =  \begin{bmatrix}
            0 & 1 \\ -\dfrac{q^2}{h^2} + g h & \dfrac{2q}{h}
        \end{bmatrix}.
\end{equation}
The Roe matrix of the SW system is provided by the following proposition:
\begin{proposition}
    \cite{leveque2002finite} The Roe matrix of the SW equations in \cref{eq:SWcon} is:
\begin{equation}
    \hat{\mathbf{A}}_{i-1/2} = \begin{bmatrix}
        0 & 1 \\ -\dfrac{\bar{q}^2}{\bar{h}^2} + g \bar{h} & 2\dfrac{\bar{q}}{\bar{h}}
    \end{bmatrix}, \qquad \text{where} \qquad \bar{h}=\dfrac{h_l+h_r}{2},\qquad \bar{q}=\bar{h} \dfrac{q_l h_l^{-1/2} + q_r h_r^{-1/2}}{h_l^{1/2}+h_r^{1/2}},
    \label{eq:AmatSW}
\end{equation}
\end{proposition}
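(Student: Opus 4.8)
The plan is to follow the change-of-variables construction of Roe outlined in \cref{app:RoeLin}: pick parameter variables $\mathbf{z}(\mathbf{u})$ in which the flux and the conserved variables become low-degree polynomials, integrate the Jacobians $\partial_{\mathbf{z}}\mathbf{f}$ and $\partial_{\mathbf{z}}\mathbf{u}$ along the line path, and set $\hat{\mathbf{A}}_{i-1/2}=\hat{\mathbf{C}}_{i-1/2}\hat{\mathbf{B}}_{i-1/2}^{-1}$ as in \cref{eq:Roe1,eq:Roe2}. For the SW system \cref{eq:SWcon} the natural choice is $\mathbf{z}=[z_1,z_2]^\top=[\sqrt{h},\,q/\sqrt{h}]^\top$, which is invertible with $h=z_1^2$, $q=z_1 z_2$, so that the momentum state becomes $\mathbf{u}(\mathbf{z})=[z_1^2,\,z_1 z_2]^\top$ and the ``advective'' part of the flux becomes $[q,\,q^2/h]^\top=[z_1 z_2,\,z_2^2]^\top$ — both quadratic in $\mathbf{z}$. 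The only obstruction to a one-line integration is the hydrostatic term $\tfrac12 g h^2=\tfrac12 g z_1^4$, whose $z_1$-derivative $2g z_1^3$ is cubic along the path, so I would handle it separately.

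Concretely, write $\mathbf{f}=\mathbf{f}_{\mathrm{adv}}+\mathbf{f}_{\mathrm{pres}}$ with $\mathbf{f}_{\mathrm{pres}}=[0,\tfrac12 g h^2]^\top$. Along $\mathbf{z}(\xi)=\mathbf{z}_l+(\mathbf{z}_r-\mathbf{z}_l)\xi$ every entry of $\partial_{\mathbf{z}}\mathbf{u}$ and of $\partial_{\mathbf{z}}\mathbf{f}_{\mathrm{adv}}$ is affine in a single $z_i$, so $\int_0^1(\cdot)\,d\xi$ simply replaces $z_i$ by the arithmetic mean $\bar z_i=(z_{i,l}+z_{i,r})/2$, giving
\begin{equation*}
\hat{\mathbf{B}}_{i-1/2}=\begin{bmatrix}2\bar z_1 & 0\\ \bar z_2 & \bar z_1\end{bmatrix},\qquad
\hat{\mathbf{C}}^{\mathrm{adv}}_{i-1/2}=\begin{bmatrix}\bar z_2 & \bar z_1\\ 0 & 2\bar z_2\end{bmatrix}.
\end{equation*}
For the pressure term I would use the difference-of-squares identities $\tfrac12 g(h_r^2-h_l^2)=g\bar h\,(h_r-h_l)$ with $\bar h=(h_l+h_r)/2$, together with $h_r-h_l=z_{1,r}^2-z_{1,l}^2=2\bar z_1(z_{1,r}-z_{1,l})$, to write $\mathbf{f}_{\mathrm{pres}}(\mathbf{q}_r)-\mathbf{f}_{\mathrm{pres}}(\mathbf{q}_l)=\hat{\mathbf{C}}^{\mathrm{pres}}_{i-1/2}(\mathbf{z}_r-\mathbf{z}_l)$, with $\hat{\mathbf{C}}^{\mathrm{pres}}_{i-1/2}$ having the single nonzero entry $2 g\bar h\,\bar z_1$ in position $(2,1)$. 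Setting $\hat{\mathbf{C}}_{i-1/2}=\hat{\mathbf{C}}^{\mathrm{adv}}_{i-1/2}+\hat{\mathbf{C}}^{\mathrm{pres}}_{i-1/2}$ and computing $\hat{\mathbf{A}}_{i-1/2}=\hat{\mathbf{C}}_{i-1/2}\hat{\mathbf{B}}_{i-1/2}^{-1}$, the factors $\bar z_1$ cancel and, writing $\hat u:=\bar z_2/\bar z_1=(q_l h_l^{-1/2}+q_r h_r^{-1/2})/(h_l^{1/2}+h_r^{1/2})$, one reaches
\begin{equation*}
\hat{\mathbf{A}}_{i-1/2}=\begin{bmatrix}0 & 1\\ -\hat u^2+g\bar h & 2\hat u\end{bmatrix}.
\end{equation*}
Identifying the Roe-averaged momentum $\bar q:=\bar h\,\hat u$ reproduces exactly \cref{eq:AmatSW}; note $\bar h=(h_l+h_r)/2$ while $\bar z_1^2\neq\bar h$ in general, so it is essential that $g\bar h$ — and not $g\bar z_1^2$ — survives, which is precisely what the difference-of-squares step delivers.

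Finally I would verify the three defining properties of \cref{app:RoeLin}. Property (iii) (the RH condition) holds by construction, since $\hat{\mathbf{A}}_{i-1/2}(\mathbf{q}_r-\mathbf{q}_l)=\hat{\mathbf{C}}_{i-1/2}\hat{\mathbf{B}}_{i-1/2}^{-1}\hat{\mathbf{B}}_{i-1/2}(\mathbf{z}_r-\mathbf{z}_l)=\hat{\mathbf{C}}_{i-1/2}(\mathbf{z}_r-\mathbf{z}_l)=\mathbf{f}(\mathbf{q}_r)-\mathbf{f}(\mathbf{q}_l)$. Property (i) (consistency) follows because $\mathbf{q}_l,\mathbf{q}_r\to\hat{\mathbf{q}}$ forces $\bar h\to\hat h$ and $\bar q\to\hat q$, so $\hat{\mathbf{A}}_{i-1/2}\to\partial_{\mathbf{u}}\mathbf{f}(\hat{\mathbf{q}})$. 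For property (ii) (hyperbolicity) I would compute the characteristic polynomial of the $2\times2$ companion-type matrix: its discriminant collapses to $4g\bar h$, giving the two real, distinct eigenvalues $\hat u\pm\sqrt{g\bar h}$ for any $\bar h>0$, $g>0$. I expect the only genuinely delicate point to be bookkeeping the pressure term correctly — keeping $\bar h$, the arithmetic mean of the depths, distinct from $\bar z_1^2$ — since everything else is routine linear algebra. As an aside, the ``reverse-engineering'' route of \cref{app:RoeLin}, i.e. positing $\hat{\mathbf{A}}_{i-1/2}=\partial_{\mathbf{u}}\mathbf{f}(\bar{\mathbf{q}})$ and solving the RH equation for $\bar{\mathbf{q}}$, works here as well and yields the same matrix; indeed the resulting relation for $\bar q$ is exactly \cref{eq:SWutRoe1} (with $h$ in place of $\rho$).
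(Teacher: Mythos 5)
Your proposal is correct and follows essentially the same route as the paper: the Roe parameter change $\mathbf{z}=[\sqrt{h},\,q/\sqrt{h}]^\top$, integration of $\partial_{\mathbf z}\mathbf u$ and $\partial_{\mathbf z}\mathbf f$ along the line path to form $\hat{\mathbf{B}}_{i-1/2}$ and $\hat{\mathbf{C}}_{i-1/2}$, the product $\hat{\mathbf{A}}_{i-1/2}=\hat{\mathbf{C}}_{i-1/2}\hat{\mathbf{B}}_{i-1/2}^{-1}$, and the same verification of the three properties (with the same discriminant $4g\bar h$). Your handling of the hydrostatic term by the difference-of-squares identity is just a closed-form shortcut for the paper's direct integration $\int_0^1 2g(z^1(\xi))^3\,d\xi=2g\bar Z^1\bar h$ and yields the identical entry.
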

\begin{proof}
    Following the technique described in \cref{app:RoeLin}, as introduced in \cite{roe1981approximate}, we employ the change of variables $\mathbf{z}(\mathbf{u})=h^{-1/2}\mathbf{u}$.~This mapping is reversible, resulting to:
\begin{equation}
    \mathbf{z}(\mathbf{u}) = \begin{bmatrix} z^1 \\ z^2 \end{bmatrix} = \begin{bmatrix} h^{1/2} \\ q h^{-1/2} \end{bmatrix} , \qquad \mathbf{u}(\mathbf{z}) = \begin{bmatrix} (z^1)^2 \\ z^1 z^2 \end{bmatrix} \Rightarrow \partial_{\mathbf{z}} \mathbf{u}(\mathbf{z})= \begin{bmatrix}
        2 z^1 & 0 \\ z^2 & z^1
    \end{bmatrix}.
    \label{eq:SWroe1}
\end{equation}  
Now, the flux function and its Jacobian is written in terms of $\mathbf{z}$ as:
\begin{equation}
    \mathbf{f}(\mathbf{z}) = \begin{bmatrix}
        z^1 z^2 \\ (z^2)^2 + \dfrac{1}{2} g (z^1)^4  \end{bmatrix}, \quad \partial_{\mathbf{z}} \mathbf{f}(\mathbf{z}) =  \begin{bmatrix}
            z^2 & z^1 \\ 2 g (z^1)^3  & 2 z^2
        \end{bmatrix}.
        \label{eq:SWroe2}
\end{equation}
Next, setting the line path $z^p(\xi) = z^p_l+ (z^p_r-z^p_l)\xi$ for $p=1,2$, the integration of all elements of the matrices in \cref{eq:SWroe1,eq:SWroe2} is straightforward, resulting to:
\begin{equation}
    \mathbf{\hat{B}}_{i-1/2} = \begin{bmatrix}
        2 \bar{Z}^1 & 0 \\ \bar{Z}^2 & \bar{Z}^1 
    \end{bmatrix}, \qquad \mathbf{\hat{C}}_{i-1/2} = \begin{bmatrix}
        \bar{Z}^2 & \bar{Z}^1 \\ 2 g\bar{Z}^1 \bar{h} & 2\bar{Z}^2 
    \end{bmatrix},
\end{equation}
where $\bar{Z}^p=(z^p_l+z^p_r)/2$ and $\bar{h} = (h_l+h_r)/2$.~Thus, the matrix $\mathbf{\hat{A}}_{i-1/2}$ takes the form:
\begin{equation}
    \hat{\mathbf{A}}_{i-1/2}=\hat{\mathbf{C}}_{i-1/2} \hat{\mathbf{B}}_{i-1/2}^{-1}=\begin{bmatrix}
        0 & 1 \\ -\dfrac{\bar{q}^2}{\bar{h}^2} + g \bar{h} & \dfrac{2 \bar{q}}{\bar{h}}
    \end{bmatrix},
    \label{eq:AmatSW1}
\end{equation}
where 
\begin{equation}
    \bar{q} =\bar{h} \dfrac{q_l h_l^{-1/2} + q_r h_r^{-1/2}}{h_l^{1/2} + h_r^{1/2}}.
\end{equation}
For the computation of $\mathbf{\hat{A}}_{i-1/2}$, one can alternatively follow the ``reverse engineering'' technique presented in \cref{app:linRiem} and solve \cref{eq:RoeAlt} for $\bar{\mathbf{q}} = (\bar{h}, \bar{q})$, as done in \citep{ketcheson2020riemann}.~This procedure implies the same average states, $\bar{h}$ and $\bar{q}$, found above, and thus results to the same $\mathbf{\hat{A}}_{i-1/2}$ matrix of \cref{eq:AmatSW1}.

The resulting $\mathbf{\hat{A}}_{i-1/2}$ in \cref{eq:AmatSW1} is the Jacobian $\partial_{\mathbf{u}} \mathbf{f}(\mathbf{u})$ evaluated in $(\bar{h},\bar{q})$.~This implies that the property (i) in \cref{app:linRiem} is satisfied; i.e., when $(h_l,q_l)$,$(h_r,q_r)\rightarrow (\hat{h},\hat{q})$, then $\bar{h}\rightarrow\hat{h}$ and $\bar{q}\rightarrow\hat{q}$, so that $\mathbf{\hat{A}}_{i-1/2}((h_l,q_l),(h_r,q_r))\rightarrow \partial_{(h,q)} \mathbf{f}(\hat{h},\hat{q})$.~In addition, the matrix $\mathbf{\hat{A}}_{i-1/2}$ in \cref{eq:AmatSW1} has always real eigenvalues, thus satisfying property (ii) in \cref{app:linRiem}.~This is because:
\begin{equation}
Tr(\mathbf{\hat{A}}_{i-1/2})^2-4 Det(\mathbf{\hat{A}}_{i-1/2}) = 4 g \bar{h}\geq 0, 
\end{equation}
since $g>0$ and $\bar{h}\geq 0$.~Finally, as shown in \cref{app:RoeLin}, the matrix $\mathbf{\hat{A}}_{i-1/2}$ also satisfies the RH condition by construction.~Hence, all three properties in \cref{app:linRiem} are satisfied, defining $\mathbf{\hat{A}}_{i-1/2}$ in \cref{eq:AmatSW1} a Roe matrix.
\end{proof}

The eigenvalues $\hat{\lambda}^{1,2}_{i-1/2}$ and the eigenvectors $\mathbf{\hat{r}}^{1,2}_{i-1/2}$ of the matrix $\mathbf{\hat{A}}_{i-1/2}$ can be computed analytically as:
\begin{equation}
    \hat{\lambda}^{1,2}_{i-1/2} = \dfrac{\bar{q}}{\bar{h}} \pm \sqrt{g \bar{h}}, \qquad \mathbf{\hat{r}}^{1,2}_{i-1/2} = \begin{bmatrix}
        1 \\ \dfrac{\bar{q}}{\bar{h}} \pm \sqrt{g \bar{h}}
    \end{bmatrix}.
    \label{eq:SWev}
\end{equation}

Hence, to perform the FV update in \cref{eq:HRupdate} with the Roe solver, we set $(h_l,q_l)^\top = \mathbf{Q}^n_{i-1}$ and $(h_r,q_r)^\top = \mathbf{Q}^n_i$ at the $n$-th time step and $i$-th cell, and compute (i) the speeds $s^p_{i-1/2}$ by the eigenvalues $\hat{\lambda}^p_{i-1/2}$ in \cref{eq:SWev} and, (ii) the waves $\mathbf{W}^p_{i-1/2}$ (computed via the eigenvectors in \cref{eq:SWev}) by the expressions:
\begin{equation}
    \mathbf{W}^p_{i-1/2} = a^p_{i-1/2} \mathbf{\hat{r}}^p_{i-1/2}, \qquad \text{where} \quad \mathbf{a}_{i-1/2}=\begin{bmatrix}
        a^1_{i-1/2} \\ a^2_{i-1/2}
    \end{bmatrix} = \begin{bmatrix}
        \mathbf{\hat{r}}^1_{i-1/2} & \mathbf{\hat{r}}^2_{i-1/2}
    \end{bmatrix}^{-1} \begin{bmatrix}
        h_r-h_l \\ q_r-q_l 
    \end{bmatrix} 
\end{equation}
for $p=1,2$.

\subsection{The PW equations}
\label{app:RoePayne}

Recall that the system of PDEs provided by the PW model in \cref{eq:PWcon} with the use of the optimal velocity function is written in the form of \cref{eq:genPDE} where
\begin{equation}
    \mathbf{u} = \begin{bmatrix}
        \rho \\ q
    \end{bmatrix}, \quad \mathbf{f}(\mathbf{u}) = \begin{bmatrix}
        q \\ \dfrac{q^2}{\rho} +  \dfrac{V_0 - V(\rho^{-1})}{2\tau}  \end{bmatrix}, \quad \partial_{\mathbf{u}} \mathbf{f}(\mathbf{u}) = \begin{bmatrix}
            0 & 1 \\ -\dfrac{q^2}{\rho^2} + \dfrac{V'(\rho^{-1})}{2\tau \rho^2} & \dfrac{2q}{\rho}
        \end{bmatrix}
\end{equation}
where $V'(\cdot)$ denotes the derivative of the optimal velocity function.~The respective Roe matrix is provided by the following proposition:
\begin{proposition}
    The Roe matrix of the PW equations in \cref{eq:PWcon} is: 
\begin{equation}
    \hat{\mathbf{A}}_{i-1/2}=\begin{bmatrix}
        0 & 1 \\ -\dfrac{\bar{q}^2}{\bar{\rho}^2} - \bar{V} & \dfrac{2 \bar{q}}{\bar{\rho}}
    \end{bmatrix}, ~~ \text{where} ~~ \bar{\rho}= \dfrac{\rho_l+\rho_r}{2}, ~~ \bar{q} =\bar{\rho} \dfrac{q_l \rho_l^{-1/2} + q_r \rho_r^{-1/2}}{\rho_l^{1/2} + \rho_r^{1/2}}, ~~ \bar{V} = \dfrac{V(\rho_l^{-1})-V(\rho_r^{-1})}{2 \tau (\rho_l-\rho_r)}.
    \label{eq:AmatP}
\end{equation}
\end{proposition}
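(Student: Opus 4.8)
The plan is to follow Roe's original construction via a nonlinear change of variables, exactly as in the derivation of the SW Roe matrix in \cref{app:RoeSW}, rather than the ``reverse engineering'' route of \cref{eq:RoeAlt}: as noted in \cref{sec:meth}, the latter would require solving $\partial_{\mathbf u}\mathbf f(\bar{\mathbf q})\cdot(\mathbf q_r-\mathbf q_l)=\mathbf f(\mathbf q_r)-\mathbf f(\mathbf q_l)$ for a \emph{closed-form} average $\bar{\mathbf q}=[\bar\rho,\bar q]^\top$, and the finite-difference quotient $\bar V$ that ends up in the $(2,1)$ entry is not, for a general optimal-velocity function, equal to $-V'(\bar\rho^{-1})/(2\tau\bar\rho^2)$ for any such $\bar\rho$. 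So first I would introduce the same mapping used for SW, $\mathbf z(\mathbf u)=\rho^{-1/2}\mathbf u$, i.e.\ $z^1=\rho^{1/2}$ and $z^2=q\rho^{-1/2}$, which is invertible with $\rho=(z^1)^2$, $q=z^1z^2$. In these variables the flux is $\mathbf f(\mathbf z)=\bigl[z^1z^2,\,(z^2)^2+P((z^1)^2)\bigr]^\top$ and $\mathbf u(\mathbf z)=\bigl[(z^1)^2,\,z^1z^2\bigr]^\top$, so $\partial_{\mathbf z}\mathbf u$ is identical to the SW case and $\partial_{\mathbf z}\mathbf f$ differs only in its $(2,1)$ entry, which is $\partial_{z^1}f^2=2z^1\,P'((z^1)^2)$.

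Next I would integrate along the line path $\mathbf z(\xi)=\mathbf z_l+(\mathbf z_r-\mathbf z_l)\xi$, $\xi\in[0,1]$, to form $\hat{\mathbf B}_{i-1/2}=\int_0^1\partial_{\mathbf z}\mathbf u\,d\xi$ and $\hat{\mathbf C}_{i-1/2}=\int_0^1\partial_{\mathbf z}\mathbf f\,d\xi$, so that $\mathbf f(\mathbf q_r)-\mathbf f(\mathbf q_l)=\hat{\mathbf C}_{i-1/2}(\mathbf z_r-\mathbf z_l)$ and $\mathbf q_r-\mathbf q_l=\hat{\mathbf B}_{i-1/2}(\mathbf z_r-\mathbf z_l)$ as in \cref{eq:Roe1,eq:Roe2}. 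The $\hat{\mathbf B}$ block and three of the four entries of $\hat{\mathbf C}$ integrate linear functions of $\xi$ and reduce, as for SW, to arithmetic means $\bar Z^1=(\rho_l^{1/2}+\rho_r^{1/2})/2$ and $\bar Z^2=(q_l\rho_l^{-1/2}+q_r\rho_r^{-1/2})/2$. The key step --- and the reason this route succeeds where the reverse-engineering one fails --- is the remaining entry $\int_0^1 2z^1(\xi)\,P'((z^1(\xi))^2)\,d\xi$: the substitution $w=(z^1(\xi))^2$ turns it into the telescoping integral $\frac{1}{z^1_r-z^1_l}\int_{\rho_l}^{\rho_r}P'(w)\,dw=\bigl(P(\rho_r)-P(\rho_l)\bigr)/\bigl(\rho_r^{1/2}-\rho_l^{1/2}\bigr)$, which is closed form for \emph{any} pressure law, in particular for $P(\rho)=(V_0-V(\rho^{-1}))/(2\tau)$ of \cref{eq:PWpres}. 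I expect this telescoping observation to be the only genuinely delicate point; everything else parallels \cref{app:RoeSW} verbatim.

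Then I would set $\hat{\mathbf A}_{i-1/2}=\hat{\mathbf C}_{i-1/2}\hat{\mathbf B}_{i-1/2}^{-1}$ and simplify. The first row collapses to $[0,1]$; writing $\bar\rho=(\rho_l+\rho_r)/2$ and $\bar q=\bar\rho\,\bar Z^2/\bar Z^1=\bar\rho\,(q_l\rho_l^{-1/2}+q_r\rho_r^{-1/2})/(\rho_l^{1/2}+\rho_r^{1/2})$ --- note only the ratio $\bar q/\bar\rho=\bar Z^2/\bar Z^1$ actually enters the matrix, so $\bar\rho=(\rho_l+\rho_r)/2$ is just the normalisation matching SW --- the $(2,2)$ entry becomes $2\bar q/\bar\rho$. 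For the $(2,1)$ entry I would use $2\bar Z^1=\rho_l^{1/2}+\rho_r^{1/2}$ together with the identity $(\rho_l^{1/2}+\rho_r^{1/2})(\rho_r^{1/2}-\rho_l^{1/2})=\rho_r-\rho_l$, so that it equals $-(\bar Z^2/\bar Z^1)^2+(P(\rho_r)-P(\rho_l))/(\rho_r-\rho_l)=-\bar q^2/\bar\rho^2+(V(\rho_l^{-1})-V(\rho_r^{-1}))/(2\tau(\rho_r-\rho_l))=-\bar q^2/\bar\rho^2-\bar V$, which is exactly \cref{eq:AmatP}.

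Finally I would verify the three Roe properties, mirroring \cref{app:RoeSW}. Consistency: as $\mathbf q_l,\mathbf q_r\to\hat{\mathbf q}$ the averages tend to $\bar Z^1\to\hat\rho^{1/2}$, $\bar Z^2\to\hat q\hat\rho^{-1/2}$, hence $\bar\rho\to\hat\rho$, $\bar q\to\hat q$, and $\bar V\to-V'(\hat\rho^{-1})/(2\tau\hat\rho^2)$, so $\hat{\mathbf A}_{i-1/2}\to\partial_{\mathbf u}\mathbf f(\hat{\mathbf q})$. Hyperbolicity: since $\hat{\mathbf A}_{i-1/2}$ has companion-matrix form, $Tr(\hat{\mathbf A}_{i-1/2})^2-4\,Det(\hat{\mathbf A}_{i-1/2})=-4\bar V$, which is nonnegative because the optimal-velocity function is nondecreasing in the spacing $\rho^{-1}$ (equivalently $V(\rho^{-1})$ is nonincreasing in $\rho$, cf.\ \cref{eq:OVF}), so its secant slope $\bar V$ is nonpositive --- the analogue of the $g\ge 0$ argument for SW --- hence the eigenvalues are real. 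The RH condition holds by construction from \cref{eq:Roe1,eq:Roe2}. This completes the proof plan; the one step I would be most careful about is the pressure-term telescoping in the second paragraph.
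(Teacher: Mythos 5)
Your proposal is correct and follows essentially the same route as the paper's proof in \ref{app:RoePayne}: the Roe change of variables $\mathbf{z}=\rho^{-1/2}\mathbf{u}$, line-path integration with the pressure term handled by a telescoping/exact-derivative argument, assembly of $\hat{\mathbf{A}}_{i-1/2}=\hat{\mathbf{C}}_{i-1/2}\hat{\mathbf{B}}_{i-1/2}^{-1}$, and verification of the three Roe properties. The only differences are presentational --- you phrase the key integral via the substitution $w=(z^1)^2$ for a generic pressure law and argue hyperbolicity from monotonicity of the optimal-velocity function, whereas the paper writes the integrand as a total $\xi$-derivative of $V((z^1)^{-2})$ and evaluates the discriminant explicitly with the $\tanh$ form; these are equivalent.
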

\begin{proof}
    Following the technique described in \cref{app:RoeLin}, as introduced in \cite{roe1981approximate}, we employ the change of variables $\mathbf{z}(\mathbf{u})=\rho^{-1/2}\mathbf{u}$, inspired by the shallow water Roe linearization.~This mapping is reversible, resulting to:
\begin{equation}
    \mathbf{z}(\mathbf{u}) = \begin{bmatrix} z^1 \\ z^2 \end{bmatrix} = \begin{bmatrix} \rho^{1/2} \\ q \rho^{-1/2} \end{bmatrix} , \qquad \mathbf{u}(\mathbf{z}) = \begin{bmatrix} (z^1)^2 \\ z^1 z^2 \end{bmatrix} \Rightarrow \partial_{\mathbf{z}} \mathbf{u}(\mathbf{z}) = \begin{bmatrix}
        2 z^1 & 0 \\ z^2 & z^1
    \end{bmatrix}.
    \label{eq:Proe1}
\end{equation}
Now, the flux function and its Jacobian is written in terms of $\mathbf{z}$ as:
\begin{equation}
    \mathbf{f}(\mathbf{z}) =  \begin{bmatrix}
        z^1 z^2 \\ (z^2)^2 + \dfrac{V_0 - V((z^1)^{-2})}{2\tau}
    \end{bmatrix}, \quad \partial_{\mathbf{z}}  \mathbf{f}(\mathbf{z})= \begin{bmatrix}
        z^2 & z^1 \\  \dfrac{V'((z^1)^{-2})}{\tau (z^1)^3} & 2 z^1
    \end{bmatrix}.
    \label{eq:Proe2}
\end{equation}
Next, setting the line path $z^p(\xi) = z^p_l+ (z^p_r-z^p_l)\xi$ for $p=1,2$, the integration of all elements of the matrices in \cref{eq:Proe1,eq:Proe2} is straightforward, except only from the bottom left element of the matrix in \cref{eq:Proe2}, which is integrated as:
\begin{align}
    \int_0^1 \dfrac{V'((z^1(\xi))^{-2})}{\tau (z^1(\xi))^3} d \xi = \int_0^1 -\dfrac{1}{2\tau}\dfrac{d V((z^1(\xi))^2)}{dz^1(\xi)} d\xi = -\dfrac{1}{2\tau} \int_0^1  \left( \dfrac{d V((z^1(\xi))^2)}{d\xi}\right) \left( \dfrac{dz^1(\xi)}{d\xi}\right)^{-1} d\xi = \nonumber \\
    = -\dfrac{1}{2\tau (z_l^1-z^1_r)} \int_0^1  \dfrac{d V((z^1(\xi))^2)}{d\xi} d\xi = -\dfrac{V((z^1_r)^2)-V((z^1_l)^2)}{2\tau (z_r^1-z^1_l)}\equiv - \hat{V},
\end{align}
where the chain rule is employed at the third step of the above calculation.~Then, the matrices in \cref{eq:Proe1,eq:Proe2} are computed as:
\begin{equation}
    \mathbf{\hat{B}}_{i-1/2} = \begin{bmatrix}
        2 \bar{Z}^1 & 0 \\ \bar{Z}^2 & \bar{Z}^1 
    \end{bmatrix}, \qquad \mathbf{\hat{C}}_{i-1/2} = \begin{bmatrix}
        \bar{Z}^2 & \bar{Z}^1 \\ -\hat{V} & 2\bar{Z}^2 
    \end{bmatrix},
\end{equation}
where $\bar{Z}^p=(Z^p_{i-1}+Z^p_i)/2$.~Returning to the original variables $\mathbf{u}$, the matrix $\mathbf{\hat{A}}_{i-1/2}$ takes the form:
\begin{equation}
    \hat{\mathbf{A}}_{i-1/2}=\hat{\mathbf{C}}_{i-1/2} \hat{\mathbf{B}}_{i-1/2}^{-1}=\begin{bmatrix}
        0 & 1 \\ -\dfrac{\bar{q}^2}{\bar{\rho}^2} - \bar{V} & \dfrac{2 \bar{q}}{\bar{\rho}}
    \end{bmatrix},
    \label{eq:AmatP1}
\end{equation}
where 
\begin{equation}
    \bar{\rho}= \dfrac{\rho_l+\rho_r}{2}, \qquad \bar{q} =\bar{\rho} \dfrac{q_l \rho_l^{-1/2} + q_r \rho_r^{-1/2}}{\rho_l^{1/2} + \rho_r^{1/2}}, \qquad \bar{V} = \dfrac{V(\rho_l^{-1})-V(\rho_r^{-1})}{2 \tau (\rho_l-\rho_r)}.
\end{equation}
Note that $\bar{V}$ is not ill-conditioned and, in fact, 
\begin{equation}
    \bar{V} \rightarrow - \dfrac{1}{2\tau \hat{\rho}^2} \dfrac{\partial V(\hat{\rho}^{-1})}{\partial \rho}  \in\mathbb{R}, \qquad \text{as} \qquad \rho_l,\rho_r\rightarrow\hat{\rho}.
    \label{eq:OVMprop}
\end{equation}

In contrast to the SW system of PDEs, here the matrix $\mathbf{\hat{A}}_{i-1/2}$ in \cref{eq:AmatP} is not the Jacobian evaluated in $(\bar{\rho},\bar{q})$.~It is in fact impossible to derive the Roe matrix following the alternative technique presented in \cref{app:linRiem}, due to the non-linearity of the optimal velocity function, that hinders the derivation of $(\bar{\rho},\bar{q})$.

Nonetheless, the matrix $\mathbf{\hat{A}}_{i-1/2}$ in \cref{eq:AmatP1} satisfies  property (i) in \cref{app:linRiem}; i.e., when $(\rho_l,q_l)$,$(\rho_r,q_r)\rightarrow (\hat{\rho},\hat{q})$, then $\bar{\rho}\rightarrow\hat{\rho}$, $\bar{q}\rightarrow\hat{q}$ and $\bar{V} \rightarrow - \dfrac{V'(\hat{\rho}^{-1})}{2\tau \hat{\rho}^2}$ as shown in \cref{eq:OVMprop}, implying that  $\mathbf{\hat{A}}_{i-1/2}((\rho_l,q_l),(\rho_r,q_r))\rightarrow \partial_{(\rho,q)} \mathbf{f}(\hat{\rho},\hat{q})$.~In addition, the matrix $\mathbf{\hat{A}}_{i-1/2}$ in \cref{eq:AmatP1} has always real eigenvalues, thus satisfying property (ii) in \cref{app:linRiem}.~This is because:
\begin{equation}
Tr(\mathbf{\hat{A}}_{i-1/2})^2-4 Det(\mathbf{\hat{A}}_{i-1/2}) = \dfrac{2v_0\left(tanh(\beta-\gamma \rho_r^{-1})-tanh(\beta-\gamma \rho_l^{-1})\right)}{\tau(1+tanh(\beta))(\rho_r-\rho_l)} \geq 0
\end{equation}
for any $\rho_r\neq\rho_l\geq0$, since $\gamma>0$, $v_0/(\tau(1+tanh(\beta)))>0$ and $tanh(\cdot)$ is a monotonically increasing function.~Note that when $\rho_r=\rho_l$, one should use the limit in \cref{eq:OVMprop} to construct the matrix $\mathbf{\hat{A}}_{i-1/2}$.~Finally, as shown in \cref{app:RoeLin}, the matrix $\mathbf{\hat{A}}_{i-1/2}$ also satisfies the RH condition by construction.~Hence, all three properties in \cref{app:linRiem} are satisfied, making $\mathbf{\hat{A}}_{i-1/2}$ in \cref{eq:AmatP1} a Roe matrix.
\end{proof}

The eigenvalues $\hat{\lambda}^{1,2}_{i-1/2}$ and the eigenvectors $\mathbf{\hat{r}}^{1,2}_{i-1/2}$ of the matrix $\mathbf{\hat{A}}_{i-1/2}$ can be computed analytically as:
\begin{equation}
    \hat{\lambda}^{1,2}_{i-1/2} = \dfrac{\bar{q}}{\bar{\rho}} \pm \sqrt{-\bar{V}}, \qquad \mathbf{\hat{r}}^{1,2}_{i-1/2} = \begin{bmatrix}
        1 \\ \dfrac{\bar{q}}{\bar{\rho}} \pm \sqrt{ -\bar{V}}
    \end{bmatrix}.
    \label{eq:PayneEv}
\end{equation}

Hence, to perform the FV update in \cref{eq:HRupdate} with the Roe solver, (i) the speeds $s^p_{i-1/2}$ are provided by the eigenvalues $\hat{\lambda}^p_{i-1/2}$ in \cref{eq:PayneEv} and, (ii) the waves $\mathbf{W}^p_{i-1/2}$ (computed via the eigenvectors in \cref{eq:PayneEv}) are provided by the expressions:
\begin{equation}
    \mathbf{W}^p_{i-1/2} = a^p_{i-1/2} \mathbf{\hat{r}}^p_{i-1/2}, \qquad \text{where} \quad \mathbf{a}_{i-1/2}=\begin{bmatrix}
        a^1_{i-1/2} \\ a^2_{i-1/2}
    \end{bmatrix} = \begin{bmatrix}
        \mathbf{\hat{r}}^1_{i-1/2} & \mathbf{\hat{r}}^2_{i-1/2}
    \end{bmatrix}^{-1} \begin{bmatrix}
        \rho_r-\rho_l \\ q_r-q_l 
    \end{bmatrix} 
\end{equation}
for $p=1,2$.~For the FV update in \cref{eq:HRupdate} with the HLLE solver, only the eigenvalues $\hat{\lambda}^p_{i-1/2}$ in \cref{eq:PayneEv} are required, as shown in \cref{app:HLLELin}.

\clearpage
\newpage
\renewcommand{\theequation}{D.\arabic{equation}}
\renewcommand{\thefigure}{D.\arabic{figure}}
\setcounter{equation}{0}
\setcounter{figure}{0}
\section{Supplemental Figures for PW equation when learning $\mathcal{N}(\rho,q)$}

\begin{figure}[!h]
    \centering
    \subfigure[$\lvert \hat{\rho}(t,x)-\rho(t,x)\rvert$]{\includegraphics[width=0.45\textwidth]{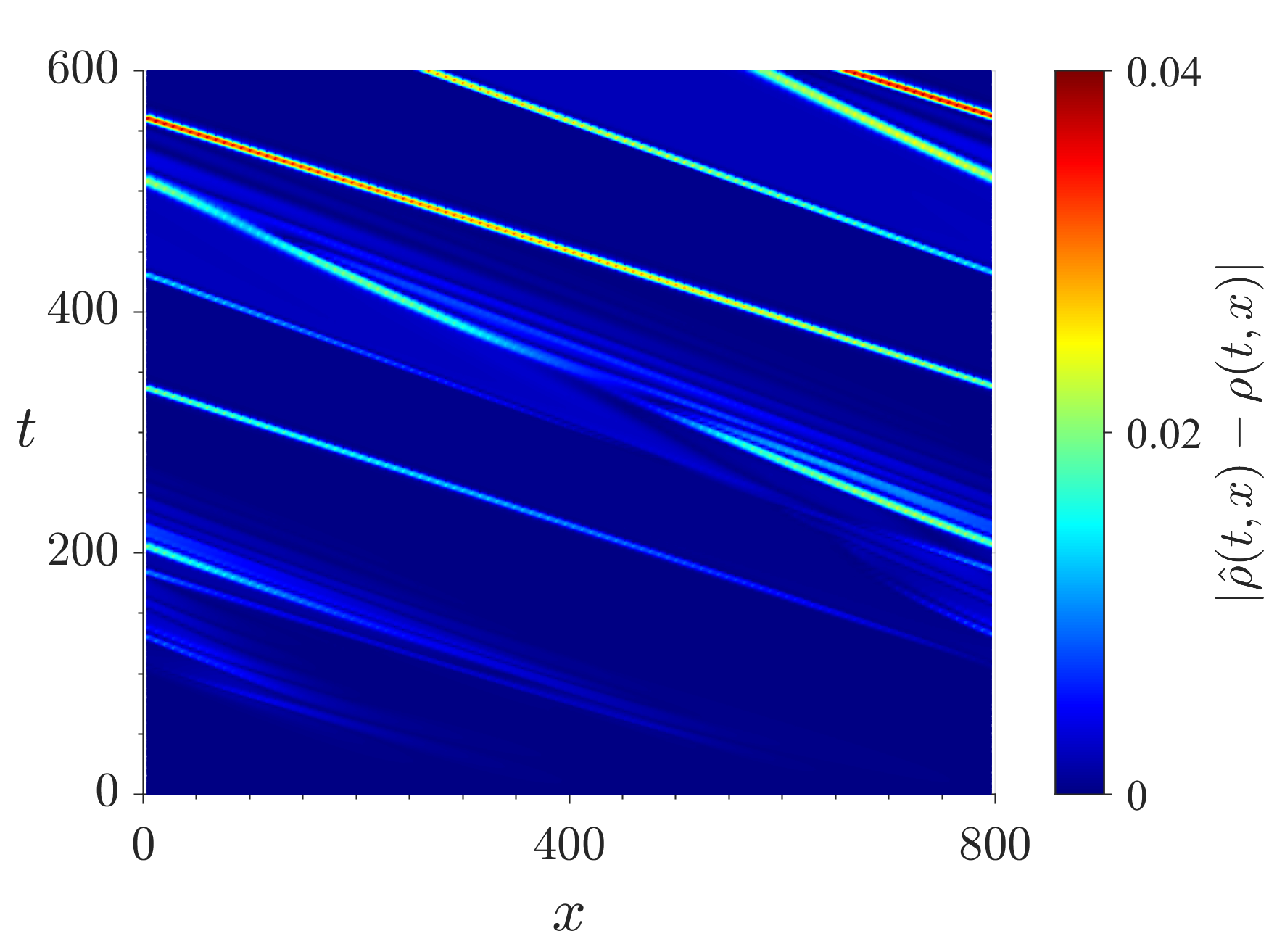}}
    \subfigure[$\lvert \hat{q}(t,x)-q(t,x)\rvert$]{\includegraphics[width=0.45\textwidth]{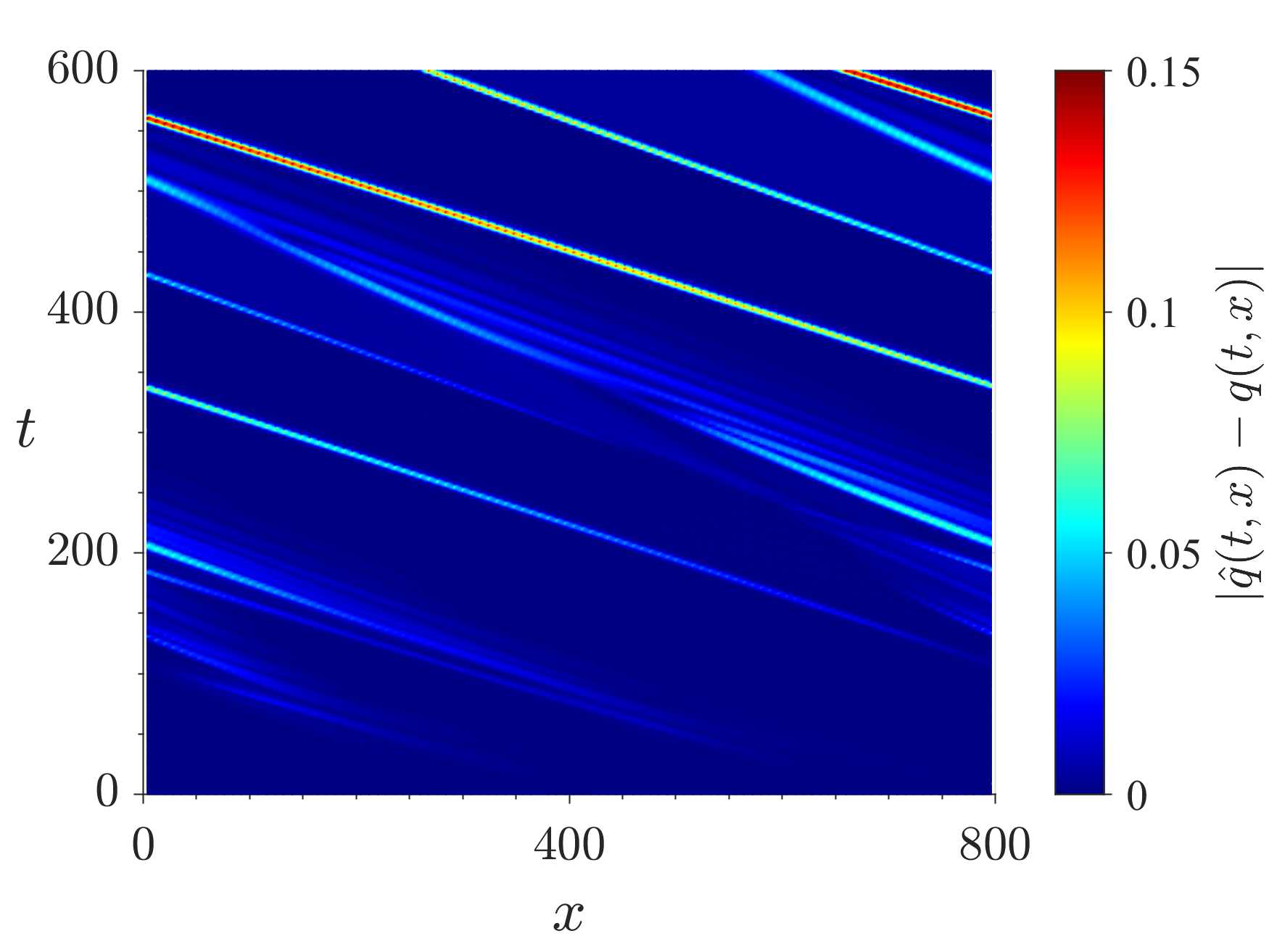}} \\
    \subfigure[$\rho(500,x)$ vs $\hat{\rho}(500,x)$]{\includegraphics[width=0.45\textwidth]{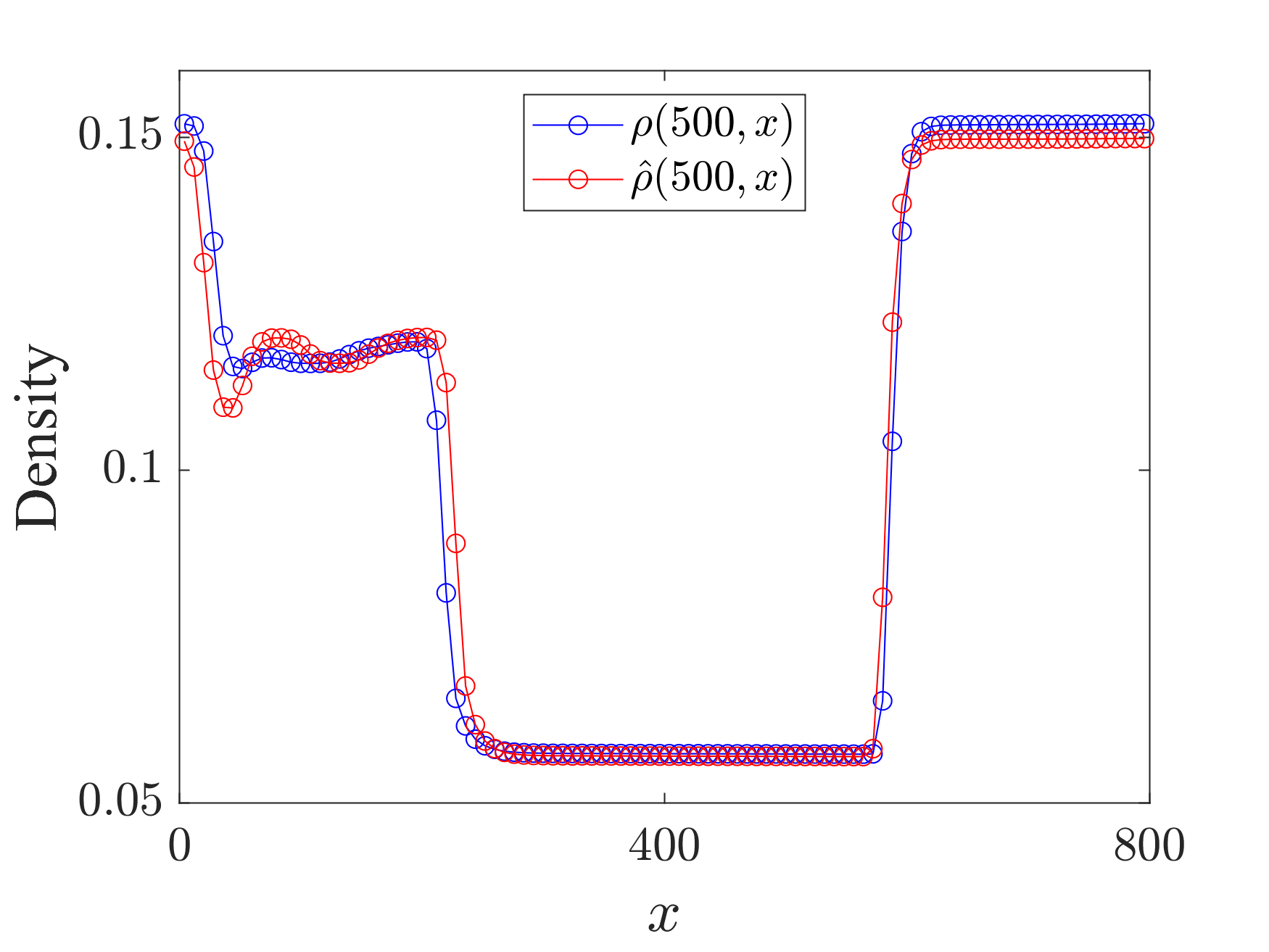}}
    \subfigure[$P(\rho)$ vs GoRINNs $\mathcal{N}(\rho,q)$]{\includegraphics[width=0.45\textwidth]{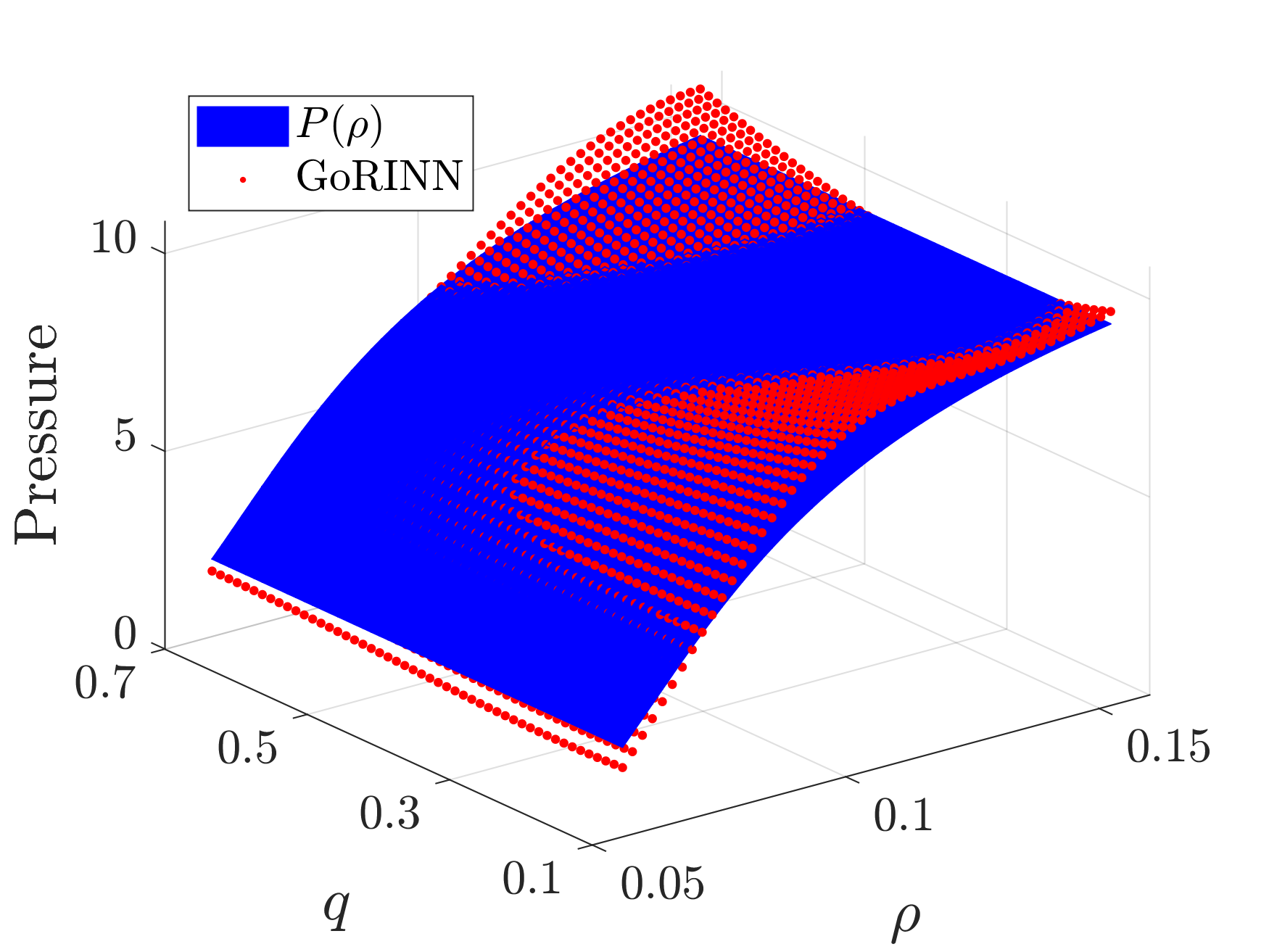}}
    \caption{Numerical accuracy of GoRINNs, assuming a pressure closure of the form $\mathcal{N}(\rho,q)$, for the PW equations.~(a,b) Absolute errors of the numerical solution provided by the GoRINNs learned equations in \cref{eq:unPW} ($\hat{\rho}$ and $\hat{q}$) vs the one provided by the PW equations in \cref{eq:PWcon} ($\rho$ and $q$).~(c) Solutions compared in panel (a) at $t=500$.~(d) Analytically known pressure closure $P(\rho)$ vs the $\mathcal{N}(\rho,q)$ functional learned with GoRINNs.}
    \label{fig:GNN_PW2}
\end{figure}

%
%
%

\end{document}